\documentclass[12pt]{amsart}

%

\usepackage{amsmath}
\usepackage{amsthm}
\usepackage{amssymb}
\usepackage{lscape,xcolor}
\usepackage{graphicx}
\usepackage{mathrsfs}
\usepackage{stmaryrd}
\usepackage{verbatim}
\usepackage{rotating}
\usepackage{tikz-cd}
\usepackage{amsrefs}
\usepackage{hyperref}
\usepackage{euscript}
\usepackage[colorinlistoftodos]{todonotes}



\allowdisplaybreaks[1]


\newcommand{\mmod}{\! \sslash \!}

\newcommand{\ull}[1]{\underline{#1}}

\newcommand{\mr}[1]{\mathrm{#1}}

\newcommand{\bra}[1]{\langle #1 \rangle}
\newcommand{\br}[1]{\overline{#1}}

\newcommand{\Z}{\mathbb{Z}}

\newcommand{\F}{\mathbb{F}}

\DeclareMathOperator{\coker}{\mathrm{coker}}


\newcommand{\tBP}[1]{BP\bra{#1}}

\newcommand{\HZ}{\mr{H}\Z}
\def \HF2{\mr{H}\F_2}


\DeclareMathOperator{\Ext}{Ext}

\newcommand\floor[1]{\lfloor#1\rfloor}


\newcommand{\HZu}{\ull{\HZ}}

\newcommand{\tBPu}[1]{\ull{\tBP{#1}}}

\def \AA0{\br{A \mmod A(0)}_*}
\def \AA2{A\mmod A(2)_*}
\def \AE2{(A\mmod E(2))_*}
\renewcommand{\AE}[1]{A\mmod E(#1)_*}
\DeclareMathOperator{\wt}{\mathrm{wt}}
\def \E2E1{E(2)\mmod E(1)_*}
\newcommand{\otau}{\overline{\tau}}
\newcommand{\ellu}{\ull{\ell}}



 \newtheorem{thm}[equation]{Theorem}
 \newtheorem{cor}[equation]{Corollary}
 \newtheorem{lem}[equation]{Lemma}
 \newtheorem{prop}[equation]{Proposition}
 \newtheorem{obs}[equation]{Observation}

 \newtheorem*{thm*}{Theorem}
 \newtheorem*{cor*}{Corollary}
 \newtheorem*{lem*}{Lemma}
 \newtheorem*{prop*}{Proposition}
  \newtheorem*{not*}{Notation}

 \theoremstyle{definition}
 \newtheorem{defn}[equation]{Definition}
 \newtheorem{ex}[equation]{Example}
 \newtheorem{exs}[equation]{Examples}
 \newtheorem{rmk}[equation]{Remark}


\newtheorem*{defn*}{Definition}
\newtheorem*{ex*}{Example}
\newtheorem*{exs*}{Examples}
\newtheorem*{rmk*}{Remark}
\newtheorem*{claim*}{Claim}

\numberwithin{equation}{section}
\numberwithin{figure}{section}

\author{D.~ Culver}\address{University of Illinois, Urbana-Champaign}\email{dculver@illinois.edu}

\thanks{This work was partially supported by NSF grant DMS-1547292}

\title{The $\tBP{2}$-cooperations algebra at odd primes}

\begin{document}

\maketitle

\begin{abstract}
	In previous work, the author analyzed the co-operations algebra for the second truncated Brown-Peterson spectrum at the prime $p=2$. The purpose of this paper is to carry out the necessary modifications to odd primes.
\end{abstract}

\tableofcontents

\section{Introduction}

In recent work, \cite{Culver2017}, the author studied the cooperations algebra for the second truncated Brown-Peterson spectrum at the prime 2. The purpose of this paper is carry out the analogous analysis at odd primes. Many of the proofs carry through in a similar way as in the $p=2$ case, though there are some differences that arise due to the difference in the structure of the dual Steenrod algebra at odd primes. 

The structure of this paper is similar to what is found in \cite{Culver2017}. In section \ref{sec:prereqs} we give an overview of the dual Steenrod algebra at odd primes. We also discuss the homology of the truncated Brown-Peterson spectra $\tBP{n}$ at odd primes and show how to derive the their coaction over the quotient $E(\otau_0, \ldots \otau_n)$ of the dual Steenrod algebra. 

In section \ref{sec:structural results}, we prove the analogous splitting results of subalgebra $\AE{2}$ as an $E(\otau_0, \otau_1,\otau_2)$-comodule. We also show that the $E_2$-term of the following Adams spectral sequence
\[
\Ext_{E(2)_*}(\F_2, \AE{2})\implies \pi_*(\tBP{2}\wedge \tBP{2})^\wedge_p
\]
decomposes into two summands: one which is concentrated in Adams filtration 0 and another which is $v_2$-torsion free and concentrated in even degrees. This results in the collapsing of the Adams spectral sequence for $\tBP{2}\wedge \tBP{2}$, just as it does in the $p=2$ case. 

In section \ref{sec:weight}, we produce the analogues of weight and Brown-Gitler comodules to our odd primary case. We also derive analogues of the exact sequences relating these Brown-Gitler comodules. 

Finally, in section \ref{sec:calculations} we provide an inductive scheme for computing the Ext groups of these Brown-Gitler comodules. We provide charts for the case when $p=3$.

\subsubsection*{Conventions} Throughout this note, $p$ will always denote an odd prime. We will use $H$ to denote the Eilenberg-MacLane spectrum $H\F_p$. We also set $q:=2(p-1)$. Given a Hopf algebra $B$ and a comodule $M$ over $B$, we will often abbreviate $\Ext_B(\F_2,M)$ to $\Ext_B(M)$. All spectra are implicitly $p$-complete. 

Also, we will use the notation $E(n)$ to denote the subalgebra of $A$ generated by the Milnor primitives $Q_0, \ldots, Q_n$. This is in conflict with the standard notation for the Johnson-Wilson theories, but as these never arise in this paper, this will not present an issue.

\section{Odd primary prerequisites}\label{sec:prereqs}

The purpose of this section is to provide, for the readers' convenience, various facts about the odd primary dual Steenrod algebra. We also record the mod $p$ homology of the spectra $\tBP{n}$ and their Margolis homology. 

\subsection{Review of the dual Steenrod algebra}

Let $p$ denote an odd prime. Then the dual $p$-primary Steenrod algebra is 
\[
A_* = \F_p[\xi_1, \xi_2, \xi_3, \ldots]\otimes E(\tau_0, \tau_1, \tau_2, \ldots)
\]
where 
\[
|\xi_n| = 2(p^n-1)
\]
and 
\[
|\tau_i|=2p^i-1.
\]
We let $P_*$ denote the polynomial part and $E_*$ the exterior part. So $A_*\cong P_*\otimes E_*$. The coproduct is given by the following formulas (cf. \cite{Milnordual}):
\begin{equation}\label{eqn2:coprod-xis}
\psi(\xi_n)=\sum_{i+j=n}\xi_i^{p^j}\otimes \xi_j
\end{equation}
and 
\begin{equation}\label{eqn2:coprod-taus}
	\psi(\tau_n) =\tau_n\otimes 1+ \sum_{i+j=n}\xi_{i}^{p^j}\otimes\tau_j.
\end{equation}
Note that $P_*$ forms a sub Hopf algebra and that there is a short exact sequence
\[
0\to A_*\cdot \overline{P_*}\to A_*\to E_*\to 0
\]
where $\overline{P_*}$ denotes the augmentation ideal of $P_*$. This endows $E_*$ the structure of a primitively generated exterior Hopf algebra.

The dual Steenrod algebra also has a canonical anti-automorphism, the conjugation map 
\[
\chi: A_*\to A_*.
\]
In \cite{Milnordual}, it is shown that $\chi$ satisfies the following formula
\begin{equation}\label{eqn: conj on zeta}
	\sum_{i+j=n}\xi_j^{p^i}\chi\xi_i=0
\end{equation}
and 
\begin{equation}\label{eqn: conj on tau}
	\tau_n+\sum_{i+j=n}\xi_j^{p^i}\chi \tau_i = 0
\end{equation}
In the dual Steenrod algebra, we will let $\zeta_n:=\chi\xi_n$ and $\otau_n:= \chi\tau_n$. The coproducts on these elements are then given by 
\begin{equation}\label{eqn2:coprod-zetas}
	\psi(\zeta_n) = \sum_{i+j=n}\zeta_j\otimes \zeta_i^{p^j}
\end{equation}
and 
\begin{equation}\label{eqn2:coprod-otaus}
\psi(\otau_n) = 1\otimes \otau_n + \sum_{i+j=n}	\otau_j\otimes \zeta_i^{p^j}.
\end{equation}

In \cite{Milnordual}, Milnor identified an important family of elements in the Steenrod algebra $A$ which will play an important role in this work. 

\begin{defn}
	The \emph{Milnor primitives} are defined inductively as elements $Q_k\in A$ by 
	\begin{align*}
		Q_0&:= \beta &  Q_{k+1}&:=[P^{p^k}, Q_k]
	\end{align*}
	where $\beta$ denotes the Bockstein element and 
	\[
	[a,b]:= ab-(-1)^{|a|\cdot |b|}ba.
	\]
\end{defn}

Milnor showed that the $Q_k$ are dual to $\otau_k$ and that they generate a commutative primitively generated exterior Hopf algebra $E$ inside the Steenrod algebra. Because primitively generated exterior algebras are self dual, the dual of $E$ is also a primitively generated exterior Hopf algebra. In fact, its dual is $E_*$. 

\begin{defn}
	For $n\geq 0$, let $E(n)$ denote the subalgebra of the Steenrod algebra generated by the Milnor primitives $Q_0, \ldots Q_n$,
	\[
	E(n):= E(Q_0, \ldots , Q_n).
	\]
\end{defn}

Then the dual of $E(n)$ is the primitively generated exterior Hopf algebra
\[
E(n)_* = E(\otau_0, \ldots , \otau_n).
\]

\begin{rmk}
	Note that it follows from \eqref{eqn: conj on tau} that the elements $\tau_n$ and $\otau_n$ are congruent modulo the ideal $A_*\cdot \overline{P}_*$. So 
	\[
	E(\otau_0, \ldots , \otau_n) = E(\tau_0, \ldots, \tau_n).
	\]
\end{rmk}

\subsection{mod $p$ homology of $\tBP{n}$}

In this subsection, we record the homology of the truncatd Brown-Peterson spectra $\tBP{n}$ and describe the coaction on these comodules. The homology of the truncated Brown-Peterson spectra was computed by Wilson in \cite{OmegaWilson}. 
\begin{thm}[Wilson, \cite{OmegaWilson}]
	\[
	H_*\tBP{n}= \AE{n}
	\]
\end{thm}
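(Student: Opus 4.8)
The plan is to construct $\tBP{n}$ from $\BP$ by killing the regular sequence $v_{n+1},v_{n+2},\ldots\in\pi_*\BP$, to read off the underlying graded group of $H_*\tBP{n}$ from the resulting cofibre sequences, and then to pin down the $A_*$-comodule structure by comparison with the dual Steenrod algebra itself. The starting point is the standard identification of $H_*\BP$ with the polynomial part of $A_*$,
\[
H_*\BP\cong P_*=\F_p[\xi_1,\xi_2,\ldots],
\]
as a sub-comodule algebra, which follows from the $p$-local splitting of $\MU$ together with the Thom isomorphism. Modelling $\tBP{n}$ as the $\BP$-module $\BP/(v_{n+1},v_{n+2},\ldots)$ reduces the homological computation to understanding the effect on mod $p$ homology of coning off a single class $v_k$ with $k>n$.

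The key point is that the mod $p$ Hurewicz homomorphism $h\colon\pi_*\BP\to H_*\BP$ vanishes in positive degrees. Indeed, the sphere has trivial $A_*$-coaction, so naturality forces $\im(h)$ to consist of comodule primitives; but if $\nu$ denotes the coaction and $\epsilon$ the counit, then applying $\mathrm{id}\otimes\epsilon$ to $\nu(x)=1\otimes x$ and using $(\mathrm{id}\otimes\epsilon)\nu=\mathrm{id}$ shows that the only primitives of $P_*$ lie in degree $0$. Hence each $v_k$ acts as zero on $H_*\BP$, the connecting map vanishes, and the cofibre sequence $\Si^{2(p^k-1)}\BP\xrightarrow{v_k}\BP\to\BP/v_k$ gives a short exact sequence of $A_*$-comodules
\[
0\to H_*\BP\to H_*(\BP/v_k)\to\Si^{2p^k-1}H_*\BP\to 0
\]
introducing one new generator in degree $2p^k-1=\lvert\otau_k\rvert$. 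Iterating over all $k>n$ and passing to the colimit yields $H_*\tBP{n}\cong P_*\otimes E(y_{n+1},y_{n+2},\ldots)$ as graded $\F_p$-modules, with $\lvert y_k\rvert=\lvert\otau_k\rvert$.

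It remains to identify this with $\AE{n}$ as a comodule, and this is where the real work lies, since the homology computation only pins down the answer additively. The cleanest route is dual: the Thom class $\tBP{n}\to\HZ_{(p)}\to H$ induces an $A$-module map $A\mmod E(n)\to H^*\tBP{n}$ sending the class of $a$ to $a\cdot\iota$, where $\iota$ is the fundamental class, and one must show this is an isomorphism. Well-definedness amounts to the vanishing $Q_i\cdot\iota=0$ for $0\le i\le n$, which is the genuinely spectrum-level input: it reflects the fact that $v_0,\ldots,v_n$ survive in $\pi_*\tBP{n}$, so that the fundamental class supports no $Q_i$-Bockstein in those degrees, whereas the higher primitives $Q_k$ act freely. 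Granting this, surjectivity holds because $H^*\tBP{n}$ is then generated by $\iota$, and injectivity follows from a Poincaré series count against $\pi_*\tBP{n}=\Z_{(p)}[v_1,\ldots,v_n]$; dualizing the isomorphism $A\mmod E(n)\cong H^*\tBP{n}$ and using the coproduct formulas \eqref{eqn2:coprod-zetas} and \eqref{eqn2:coprod-otaus} then identifies $H_*\tBP{n}$ with the sub-comodule $\AE{n}$ of $A_*$ spanned by the monomials omitting $\otau_0,\ldots,\otau_n$. The main obstacle is thus the claim $Q_i\cdot\iota=0$ for $i\le n$: proving that the new exterior classes genuinely carry the coaction of $\otau_k$, rather than merely occupying the correct degrees, is exactly the content that distinguishes $\tBP{n}$ from an arbitrary spectrum with the same mod $p$ homology groups.
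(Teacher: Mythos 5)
The paper itself gives no proof of this statement (it is quoted from Wilson's work), so your argument has to stand on its own. Its additive half does: the observation that the Hurewicz image of $\BP$ consists of elements with trivial coaction, together with the counit axiom applied to the coaction on $H_*\BP\subseteq A_*$ (which is the restriction of the coproduct), correctly kills the Hurewicz map in positive degrees, so the cofibre sequences for $\BP/(v_{n+1},v_{n+2},\ldots)$ do give $H_*\tBP{n}\cong P_*\otimes E(y_{n+1},y_{n+2},\ldots)$ as graded vector spaces.

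The gap is in the structural half. First, the step you single out as the ``genuinely spectrum-level input,'' namely $Q_i\cdot\iota=0$ for $i\le n$, is in fact automatic from your own additive computation: $Q_i\iota$ lies in $H^{2p^i-1}\tBP{n}$, and $P_*\otimes E(y_k\mid k>n)$ has no odd-degree classes below degree $2p^{n+1}-1$, so this group vanishes for every $i\le n$; no appeal to $v_0,\ldots,v_n$ surviving is needed. Second, the step that genuinely requires topological input --- surjectivity of $A\mmod E(n)\to H^*\tBP{n}$, i.e.\ that the Thom class generates $H^*\tBP{n}$ as an $A$-module --- is handled by a circular sentence: saying surjectivity holds ``because $H^*\tBP{n}$ is then generated by $\iota$'' is simply restating surjectivity. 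Cyclicity is equivalent, given your dimension count, to the statement that each new exterior class is linked to $\iota$ by $Q_k$, i.e.\ that the attaching class $v_k$ ($k>n$) is detected in Adams filtration one by $[\otau_k]$, so that the extension of comodules $0\to H_*X\to H_*(X/v_k)\to\Sigma^{2p^k-1}H_*X\to 0$ is the nonsplit one dictated by $\psi(\otau_k)$. This is a real theorem about the generators $v_k$, and nothing in the proposal proves it. The danger is not hypothetical: replacing $v_k$ by $pv_k$ (same degree, same vanishing Hurewicz image) multiplies the extension class by $p$, hence kills it, and the resulting spectrum has the same additive mod $p$ homology but a split, non-cyclic $A$-module structure on cohomology. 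Until the detection statement for the $v_k$ is established, the identification $H_*\tBP{n}\cong\AE{n}$ as comodules does not follow.
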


Since the Milnor primitive $Q_k$ is dual to $\tau_k$ (cf. \cite{Milnordual}), we have that 
\[
\AE{n} = P_*\otimes E(\otau_{n+1}, \otau_{n+2}, \ldots ).
\]
In order to proceed, we need to know $\AE{n}$ as a comodule over the dual Steenrod algebra. As in \cite{Culver2017}, this is obtained as follows:

\[
\begin{tikzcd}
\alpha:	\AE{n}\arrow[r,"\psi"] & A_*\otimes \AE{n}\arrow[r, "\pi\otimes 1"] & E(n)_*\otimes \AE{n}
\end{tikzcd}
\]
where the first map is the restriction of the coproduct to $\AE{n}$. That it's target is $A_*\otimes \AE{n}$ follows from applying the coproduct formulas to $\zeta_n$ and $\otau_k$ for $k\geq 3$. The map $\pi$ is the projection map fitting in the following short exact sequence
\[
\begin{tikzcd}
	0\arrow[r] & A_*\cdot \overline{\AE{n}}\arrow[r] & A_* \arrow[r,"\pi"] & E(n)_*\arrow[r] & 0.
\end{tikzcd}
\]

Since $\tBP{n}$ is a commutative ring spectrum, its homology is a comodule algebra. So the coaction map will be an algebra map, and so it is enough to determine the coaction on the algebra generators. 	From \cite{Milnordual}, the Hopf algebra $E(n)_*$ is the exterior algebra $E(\otau_0, \ldots, \otau_n)$ where the generators $\otau_k$ are primitive. Thus we have the following,

\begin{prop}\label{prop:E(n)coaction}
The $E(n)_*$-coaction on $\AE{n}$ is given on generators by
\begin{align}
	\alpha(\zeta_k) &= 1\otimes \zeta_k \label{eqn:Enzeta}\\
	\alpha(\otau_{n+k}) &= 1\otimes \otau_{n+k}+\sum_{0\leq i\leq n}\otau_i\otimes \zeta_{n+k-i}^{p^i} \label{eqn:Enotau}
\end{align}
for all $k>0$.
\end{prop}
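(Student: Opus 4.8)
The plan is to compute the composite coaction $\alpha = (\pi\otimes 1)\circ\psi$ directly on the algebra generators $\zeta_k$ and $\otau_{n+k}$, using the coproduct formulas \eqref{eqn2:coprod-zetas} and \eqref{eqn2:coprod-otaus} together with the explicit description of the projection $\pi\colon A_*\to E(n)_*$. The key observation driving everything is the structure of $\pi$: since $E(n)_* = E(\otau_0,\ldots,\otau_n)$ is the quotient of $A_*$ by the ideal $A_*\cdot\overline{P}_*$ together with the exterior generators $\otau_i$ for $i>n$, the map $\pi$ kills every $\zeta_j$ with $j\geq 1$ (these lie in the polynomial part $\overline{P}_*$), kills $\otau_i$ for $i>n$, and sends $\otau_i\mapsto\otau_i$ for $0\leq i\leq n$ while fixing the unit. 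So the entire calculation reduces to reading off which terms of each coproduct survive after applying $\pi$ to the left tensor factor.

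First I would treat the polynomial generators. Applying $\psi$ to $\zeta_k$ gives $\sum_{i+j=k}\zeta_j\otimes\zeta_i^{p^j}$. Every left-hand factor $\zeta_j$ with $j\geq 1$ lies in $\overline{P}_*$ and is therefore killed by $\pi$; the only surviving term is the one with $j=0$, namely $\zeta_0\otimes\zeta_k = 1\otimes\zeta_k$. This yields \eqref{eqn:Enzeta}. Next I would handle the exterior generators. Applying $\psi$ to $\otau_{n+k}$ gives $1\otimes\otau_{n+k} + \sum_{i+j=n+k}\otau_j\otimes\zeta_i^{p^j}$. Here $\pi$ preserves the leading term $1\otimes\otau_{n+k}$, and in the sum it retains precisely those summands whose left factor $\otau_j$ has $0\leq j\leq n$; all summands with $j>n$ are annihilated because $\pi(\otau_j)=0$ there. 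Reindexing the surviving terms via $i = n+k-j$ (so that $\zeta_i^{p^j} = \zeta_{n+k-j}^{p^j}$) and renaming $j$ to $i$ produces exactly $\sum_{0\leq i\leq n}\otau_i\otimes\zeta_{n+k-i}^{p^i}$, giving \eqref{eqn:Enotau}.

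Two subsidiary points deserve care. I should first confirm that $\psi$ actually lands in $A_*\otimes\AE{n}$ rather than the full $A_*\otimes A_*$, as claimed in the excerpt: the right-hand tensor factors appearing above are powers of $\zeta_i$ and the generators $\otau_{n+k}$, all of which lie in $\AE{n} = P_*\otimes E(\otau_{n+1},\otau_{n+2},\ldots)$, so this is immediate from the coproduct formulas. Second, because $\tBP{n}$ is a commutative ring spectrum, $\alpha$ is a map of comodule algebras, so determining it on the algebra generators $\zeta_k$ and $\otau_{n+k}$ determines it everywhere; this is what lets us restrict attention to generators.

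I do not anticipate a serious obstacle here, as the argument is essentially a bookkeeping exercise once the behavior of $\pi$ on generators is pinned down. The only genuinely delicate point is being certain that the projection $\pi$ has the claimed effect on each monomial — in particular that the ideal $A_*\cdot\overline{\AE{n}}$ (equivalently the kernel defining $E(n)_*$) contains all the terms we wish to discard and none that we wish to keep. Verifying that the $\zeta_j^{p^m}$ factors with $j\geq 1$ and the $\otau_i$ with $i>n$ all lie in this ideal, while the retained $\otau_i$ with $i\leq n$ map isomorphically onto the generators of $E(n)_*$, is the one step where I would be most careful to avoid an off-by-one error in the range of the index.
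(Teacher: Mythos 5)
Your proposal is correct and follows the same route as the paper: the paper likewise defines $\alpha$ as the composite $(\pi\otimes 1)\circ\psi$, notes that it suffices to work on algebra generators since $H_*\tBP{n}$ is a comodule algebra, and reads off the surviving terms of the coproducts \eqref{eqn2:coprod-zetas} and \eqref{eqn2:coprod-otaus} under the projection $\pi\colon A_*\to E(n)_*$. Your explicit bookkeeping of which tensor factors are killed by $\pi$ (all $\zeta_j$ with $j\geq 1$ and all $\otau_i$ with $i>n$) is exactly the verification the paper leaves implicit.
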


This corresponds to a left $E(n)$-module structure on $\AE{n}$. Since we will focus only on the case when $n=2$, we will only write this part down explicitly. The $E(2)$-action on $\AE{2}$ is given by 
\[
Q_i\zeta_n = 0
\]
for all $i$ and $n$, and for all $k\geq 1$,
\begin{align}\label{eqn:Q0action}
Q_0\otau_{2+k} &= \zeta_{2+k}\\
\label{eqn:Q1action}Q_1\otau_{2+k} &= \zeta_{1+k}^p\\
\label{eqn:Q2action}Q_2\otau_{2+k} &= \zeta_{k}^{p^2}
\end{align}

These can be obtained from Proposition 17.10 of \cite{Switzer} as follows. Let $X$ be a spectrum. Then the mod $p$ homology of $X$ is given by 
\[
H_*(X):= \pi_*(H\wedge X).
\]
This is naturally a left comodule over the dual Steenrod algebra. But it is also a left $A$-module: Let $a$ be an element of the mod $p$ Steenrod algebra, then the action of $a$ on $H_*X$ arises from 
\[
\begin{tikzcd}
	H\wedge X\arrow[r, "a\wedge 1_X"] & H\wedge X.
\end{tikzcd}
\]
This left action of $A$ and the left comodule structure on $H_*X$ are intricately related. 

\begin{prop}[17.10, \cite{Switzer}]
	Let $a\in A$ and $u\in H_*X$. Suppose that the coaction on $u$ is 
	\[
	\alpha(u) = \sum_i \eta_i\otimes u_i.
	\]
	Then there is the identity
	\[
	a \cdot u = \sum_i\langle a, \chi \eta_i\rangle u_i.
	\]
\end{prop}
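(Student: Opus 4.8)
The plan is to translate both structures on $H_*X$ into maps of spectra and to compare them by realizing the Kronecker pairing through the multiplication $\mu\colon H\wedge H\to H$; the conjugation $\chi$ will enter through the flip map $\tau\colon H\wedge H\to H\wedge H$, which induces $\chi$ on $A_*=\pi_*(H\wedge H)$. Represent $u\in H_mX$ by a map $u\colon S^m\to H\wedge X$ and write $\eta\colon\mathbb{S}\to H$ for the unit. By definition the coaction $\alpha(u)=\sum_i\eta_i\otimes u_i$ is the image of $(1_H\wedge\eta\wedge 1_X)\circ u\colon S^m\to H\wedge H\wedge X$ under the K\"unneth isomorphism $\theta\colon\pi_*(H\wedge H\wedge X)\xrightarrow{\ \cong\ }A_*\otimes_{\F_p}H_*X$, and, viewing $a$ as a map $a\colon H\to\Sigma^{|a|}H$, the module structure is $a\cdot u=(a\wedge 1_X)_*(u)$.

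The first step is the tautological factorization
\[
a\wedge 1_X=(\mu\wedge 1_X)\circ(a\wedge 1_H\wedge 1_X)\circ(1_H\wedge\eta\wedge 1_X),
\]
valid because $\mu\circ(1_H\wedge\eta)=1_H$. Since its rightmost factor is exactly the map inducing the coaction, this rewrites $a\cdot u$ as $\Phi_*(\alpha(u))$, where $\Phi:=(\mu\wedge 1_X)\circ(a\wedge 1_H\wedge 1_X)$. The entire statement is thereby reduced to computing $\Phi_*\colon A_*\otimes_{\F_p}H_*X\to H_*X$ on the K\"unneth decomposition and checking the single identity
\[
\Phi_*(\xi\otimes w)=\langle a,\chi\xi\rangle\,w,\qquad \xi\in A_*,\ w\in H_*X;
\]
feeding $\alpha(u)=\sum_i\eta_i\otimes u_i$ into it then yields the proposition.

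To verify this identity I would unwind the external product defining $\theta^{-1}(\xi\otimes w)$, in which the two coefficient copies of $H$ carried by $\xi$ and by $w$ are merged by $\mu$, and then apply the Cartan formula to $a\circ\mu$ (equivalently, the coproduct of $a$ in $A$). The effect is that $w$ is returned unchanged, its coefficient factor contributing only through the counit, while $a$ is evaluated against $\xi$. Here the conjugation is forced: because $\Phi$ compares the two tensor factors of $\xi$ in the order opposite to the convention defining $\langle\,,\,\rangle$, one must insert the flip $\tau$, and homotopy commutativity $\mu\simeq\mu\circ\tau$ together with naturality $\tau\circ(1_H\wedge a)\simeq(a\wedge 1_H)\circ\tau$ converts the naive pairing into $\langle a,\tau_*\xi\rangle=\langle a,\chi\xi\rangle$. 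Conceptually this is the familiar phenomenon that a left comodule becomes a left module over the dual Hopf algebra only after twisting by the antipode.

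The main obstacle is precisely this bookkeeping: one must keep scrupulous track of which of the three smash factors of $H\wedge H\wedge X$ records $A_*$ versus the homology functor, and of how the external-product form of $\theta^{-1}$ interacts with $\Phi$, so that the flip $\tau$ is introduced exactly once and produces $\chi$ rather than the identity. A secondary nuisance is the Koszul signs $(-1)^{|a||\xi|}$ arising from commuting $a$ past suspension coordinates in the graded setting; with Switzer's conventions these are absorbed into the pairing, but they should be checked rather than assumed. Everything else---the factorization, the counit identity $\mu\circ(1_H\wedge\eta)=1_H$, and the homotopy commutativity of $H$---is routine.
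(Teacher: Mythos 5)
The paper does not prove this statement at all --- it is quoted from Switzer (Proposition 17.10) as a citation --- so there is no internal argument to compare against; your sketch is essentially the standard (Switzer) proof, and it is correct in outline: the factorization $a\wedge 1_X=(\mu\wedge 1_X)\circ(a\wedge 1_H\wedge 1_X)\circ(1_H\wedge\eta\wedge 1_X)$, the reduction to the single identity $\Phi_*(\xi\otimes w)=\langle a,\chi\xi\rangle\, w$, and the appearance of $\chi$ as the effect of the flip map on $\pi_*(H\wedge H)$ are exactly the right ingredients. One simplification you would discover in carrying out the middle step: with the standard K\"unneth convention, $\theta^{-1}(\xi\otimes w)$ multiplies the \emph{second} smash factor of $\xi$ against the coefficient factor of $w$, leaving the first factor of $\xi$ untouched in position one; since $\Phi$ applies $a$ to that unmerged factor, the Cartan formula (coproduct of $a$) is never needed --- the verification uses only associativity and the unit of $\mu$, naturality of the smash product, homotopy commutativity $\mu\simeq\mu\circ\tau$, and the fact $\tau_*=\chi$, after which $\mu\circ(\eta\wedge 1_H)=1_H$ returns $w$ unchanged and produces the scalar $\langle a,\chi\xi\rangle$. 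Your invocation of the Cartan formula and the counit corresponds to a different (workable but more convoluted) bookkeeping in which both coefficient factors are merged first; it is not wrong, but it is worth noting that the cleaner route avoids it entirely.
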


\subsection{Margolis homology}

Now that we have calculated the (co)module structure on $\AE{2}$, we can compute its Margolis homology. This follows as in \cite{bluebook}. We let $M_*(\tBP{2};Q_i)$ denote the Margolis homology with respect to $Q_i$. Let $T_n(x)$ denote the $p^n$th truncated polynomial algebra generated by $x$, i.e. 
\[
T_n(x):= \F_p[x]/x^{p^n}.
\]

\begin{thm}\label{thm:Margolis homology}
	The Margolis homology for $\tBP{2}$ is given as 
	\begin{itemize}
		\item $M_*(\tBP{2};Q_ 0)\cong \F_p[\zeta_1, \zeta_2]$;
		\item $M_*(\tBP{2};Q_1)\cong \F_p[\zeta_1]\otimes T_1(\zeta_2, \zeta_3, \ldots)$; and 
		\item $M_*(\tBP{2};Q_2)\cong T_2(\zeta_1, \zeta_2, \ldots)$.
	\end{itemize}
\end{thm}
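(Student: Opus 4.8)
The plan is to compute the homology of each differential $Q_i$ acting on the $\F_p$-vector space $\AE{2} = P_*\otimes E(\otau_3, \otau_4, \ldots)$, using the explicit action formulas \eqref{eqn:Q0action}, \eqref{eqn:Q1action}, \eqref{eqn:Q2action} together with the facts that each $Q_i$ is a derivation (being primitive in $E(2)$) and that $Q_i\zeta_n = 0$. Since $Q_i^2 = 0$, each $Q_i$ makes $\AE{2}$ into a differential graded algebra, and because $\AE{2}$ factors as a tensor product of the monogenic algebras $\F_p[\zeta_n]$ (for $n\geq 1$) and $E(\otau_{2+k})$ (for $k\geq 1$), the Künneth theorem reduces the problem to computing $Q_i$-homology on each tensor factor separately.

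First I would organize the factors according to how $Q_i$ pairs them up. For a fixed $i\in\{0,1,2\}$, the formula $Q_i\otau_{2+k} = \zeta_{2+k-i}^{p^i}$ tells me that $Q_i$ sends the exterior generator $\otau_{2+k}$ to a power of a polynomial generator $\zeta_{2+k-i}$, while $Q_i\zeta_n = 0$ for all $n$. So I would group together each pair $\bigl(\F_p[\zeta_{2+k-i}],\, E(\otau_{2+k})\bigr)$ on which $Q_i$ acts nontrivially, compute the homology of the small complex $\F_p[\zeta_{2+k-i}]\otimes E(\otau_{2+k})$ with differential $d(\otau_{2+k}) = \zeta_{2+k-i}^{p^i}$, and then tensor with the leftover factors on which $Q_i$ acts trivially (these contribute their full underlying algebra to the homology). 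The key local computation is that for a polynomial generator $x$ and exterior generator $y$ with $d(y) = x^{p^i}$, the homology of $\F_p[x]\otimes E(y)$ is the truncated polynomial algebra $\F_p[x]/x^{p^i} = T_i(x)$: the cycles are $\F_p[x]$ together with $\F_p[x]\cdot y$-multiples killed by $d$, the image is $x^{p^i}\F_p[x]$, so the homology in the polynomial direction is $\F_p[x]/(x^{p^i})$, and nothing survives in the $y$-direction.

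Carrying this out for each $i$ gives the three cases. For $Q_0$ ($i=0$, so $d(\otau_{2+k}) = \zeta_{2+k}$): here $Q_0$ pairs each $\otau_{2+k}$ with $\zeta_{2+k}$ for $k\geq 1$, i.e. it pairs the exterior generators with $\zeta_3, \zeta_4, \ldots$, killing all of those polynomial generators together with all the exterior generators (the local homology $T_0(x) = \F_p[x]/x = \F_p$ is trivial in positive degree). The only polynomial generators never hit are $\zeta_1$ and $\zeta_2$, leaving $M_*(\tBP{2};Q_0)\cong \F_p[\zeta_1,\zeta_2]$. For $Q_1$ ($i=1$, $d(\otau_{2+k}) = \zeta_{1+k}^p$): the exterior generators pair with $\zeta_2, \zeta_3, \ldots$, each contributing a factor $T_1(\zeta_{1+k}) = \F_p[\zeta_{1+k}]/\zeta_{1+k}^p$, while $\zeta_1$ survives untouched, giving $\F_p[\zeta_1]\otimes T_1(\zeta_2,\zeta_3,\ldots)$. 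For $Q_2$ ($i=2$, $d(\otau_{2+k}) = \zeta_k^{p^2}$): the exterior generators pair with $\zeta_1, \zeta_2, \ldots$, each contributing $T_2(\zeta_k)$, and every polynomial generator is paired, yielding $T_2(\zeta_1,\zeta_2,\ldots)$.

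I expect the main obstacle to be purely bookkeeping rather than conceptual: one must verify carefully that the indexing is a perfect matching, namely that for each $i$ the map $k\mapsto 2+k-i$ (for $k\geq 1$) enumerates exactly the set of polynomial generators that get hit, so that no generator is double-counted and the leftover factors are correctly identified as $\{\zeta_1,\zeta_2\}$, $\{\zeta_1\}$, and $\emptyset$ respectively. A secondary point requiring care is confirming that the exterior algebra $E(\otau_3, \otau_4, \ldots)$ genuinely contributes no surviving classes, i.e. that every exterior generator is matched to a polynomial generator under each $Q_i$ and hence fully consumed in forming the truncated factors; this is immediate from the formulas but should be stated, since it is what forces the Margolis homology to be an honest (truncated) polynomial algebra with no residual exterior part.
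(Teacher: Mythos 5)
Your proposal is correct and is exactly the argument the paper invokes by reference: the cited proofs (Margolis's Lemma 16.9 and Proposition 2.13 of the author's $p=2$ paper) proceed precisely by splitting $\AE{2}$ into a tensor product of sub-complexes $\F_p[\zeta_{2+k-i}]\otimes E(\otau_{2+k})$ paired by the derivation $Q_i$, computing the local homology $T_i(\zeta_{2+k-i})$, and assembling via the K\"unneth theorem. Your bookkeeping of which polynomial generators are left untouched ($\{\zeta_1,\zeta_2\}$, $\{\zeta_1\}$, $\emptyset$) matches the three cases of the theorem, so nothing is missing.
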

\begin{proof}
	This follows in the same fashion as \cite[Lemma 16.9, pg 341]{bluebook} or \cite[Proposition 2.13]{Culver2017}
\end{proof}

\section{Structural Results}\label{sec:structural results}

In \cite{Culver2017}, the author showed that there is a decomposition $\AE{2}$ as a $E(2)_*$-comodule, 
\[
\AE{2}\cong_{E(2)_*}S\oplus Q
\]
where $S$ has trivial Margolis homology, and is hence free. The author then argued that the Ext groups of $Q$ are $v_2$-torsion free by applying a Bockstein spectral sequence. In this section, we will perform the same steps. The purpose of this section is to carry out the corresponding arguments to the case of an odd prime $p$. This requires an appropriate modification of the the notion of \emph{length} from \cite{Culver2017}. Given this, most of the arguments carry over in a straightforward fashion.

\subsection{$E(2)_*$-comodule splitting}

In \cite{Culver2017}, the concept of \emph{length} was essential to producing the $E(2)_*$-comodule splitting of $H_*\tBP{2}$. We adapt these methods to the odd primes.

\begin{defn}
	We define the \emph{length} of a monomial $x$, denoted as $\ell(x)$, by setting 
	\[
	\ell(\zeta_n)=0
	\]
	and setting
	\[
	\ell(\otau_n)=1
	\]
	for all $n$, and extending $\ell$ multiplicatively. We define $F^\ell(\AE{2})$ to be the subspace spanned by monomials of length $\leq \ell$. We let $\AE{2}^{(\ell)}$ denote the subspace spanned by monomials of length $\ell$. If $x\in \AE{2}^{(k)}$, then we say $x$ has length $\ell$, and write $\ell(x)=k$.
\end{defn}

\begin{rmk}
	Suppose the monomial $m$ is given as $\zeta_1^{i_1}\zeta_2^{i_2}\zeta_3^{i_3}\otau_3^{\epsilon_3}\zeta_4^{i_4}\cdots$. Then the length is given by 
	\[
	\ell(m) = \#\{n\geq 3\mid \epsilon_n=1\}.
	\]
\end{rmk}

\begin{rmk}
	In \cite{Culver2017}, we defined the length of a monomial $x$ to be the number of $\zeta_i$'s in $x$ with an odd exponent. Our definition here may seem different at first. However, if one regards $\zeta_n$ as the even analogue of $\otau_n$ and and $\zeta_n^2$ as the even analogue of $\zeta_n$, then the definitions coincide.
\end{rmk}

\begin{obs}
	The $F^\ell(\AE{2})$ gives a filtration of $\AE{2}$ by subcomodules.
\end{obs}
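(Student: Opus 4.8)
The plan is to prove the only substantive assertion, namely that the coaction $\alpha\colon \AE{2}\to E(2)_*\otimes \AE{2}$ of Proposition \ref{prop:E(n)coaction} carries $F^\ell(\AE{2})$ into $E(2)_*\otimes F^\ell(\AE{2})$; the remaining requirements for a filtration by subcomodules (the inclusions $F^0(\AE{2})\subseteq F^1(\AE{2})\subseteq\cdots$ and the exhaustion $\bigcup_\ell F^\ell(\AE{2})=\AE{2}$) are immediate from the definition, since $F^\ell(\AE{2})$ is by construction the span of the monomials of length $\leq \ell$. Because $\tBP{2}$ is a commutative ring spectrum, $\AE{2}$ is a comodule algebra and $\alpha$ is multiplicative; combined with the fact that $\ell$ is additive on products, this reduces the problem to checking the containment on the algebra generators $\zeta_k$ and $\otau_{2+k}$ and then propagating it through products.

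First I would read off the behavior on generators directly from \eqref{eqn:Enzeta} and \eqref{eqn:Enotau}. By \eqref{eqn:Enzeta} we have $\alpha(\zeta_k)=1\otimes\zeta_k$, whose right-hand tensor factor has length $0=\ell(\zeta_k)$. By \eqref{eqn:Enotau},
\[
\alpha(\otau_{2+k}) = 1\otimes\otau_{2+k} + \otau_0\otimes\zeta_{2+k} + \otau_1\otimes\zeta_{1+k}^{p} + \otau_2\otimes\zeta_k^{p^2},
\]
so the right-hand factors are $\otau_{2+k}$ (length $1$) together with three pure powers of $\zeta$'s (length $0$). In every case the right-hand tensor factor has length $\leq$ that of the generator. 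The essential point is thus that, on generators, $\alpha$ preserves or lowers length in the right-hand slot.

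Next I would extend this multiplicatively. Writing $\alpha(x)=\sum_i e_i\otimes x_i$ and $\alpha(y)=\sum_j f_j\otimes y_j$ with $\ell(x_i)\leq\ell(x)$ and $\ell(y_j)\leq\ell(y)$, multiplicativity of $\alpha$ gives $\alpha(xy)=\sum_{i,j} e_if_j\otimes x_iy_j$, and additivity of $\ell$ gives $\ell(x_iy_j)=\ell(x_i)+\ell(y_j)\leq\ell(x)+\ell(y)=\ell(xy)$. Inducting on the number of generators in a monomial, every monomial $m$ satisfies $\alpha(m)\in E(2)_*\otimes F^{\ell(m)}(\AE{2})$. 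Since $F^\ell(\AE{2})$ is spanned by monomials of length $\leq\ell$, this yields $\alpha\bigl(F^\ell(\AE{2})\bigr)\subseteq E(2)_*\otimes F^\ell(\AE{2})$, which is exactly the subcomodule property.

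The argument is formal once the generator computation is in hand, so I do not expect a serious obstacle; the only point requiring care is to observe that in $\alpha(\otau_{2+k})$ the three terms with $\otau_0,\otau_1,\otau_2$ in the left-hand slot each carry a pure power of a $\zeta$ (hence length $0$) in the right-hand slot, so that the length in the right-hand factor never exceeds that of the generator. I would make sure to state explicitly that $\ell$ is additive on products (this is what it means to extend $\ell$ multiplicatively) and that $F^\ell(\AE{2})$ is spanned by monomials, as both facts are used in passing from generators to arbitrary elements.
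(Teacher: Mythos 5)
Your proof is correct and takes essentially the approach the paper leaves implicit: the paper states this as an Observation with no proof, relying on the coaction formulas of Proposition~\ref{prop:E(n)coaction}, and your argument (check that the right-hand tensor factors have length at most that of each generator, then propagate through products using that the coaction is an algebra map and that length is additive on products) is precisely the verification being taken for granted. No gaps.
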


As in \cite{Culver2017}, the Leibniz formula for the action by $Q_i$ shows that 

\begin{prop}\label{prop: Qs lower length by 1}
	For $i=0,1,2$, if $\ell(x)=k>0$, then $\ell(Q_ix)=k-1$.
\end{prop}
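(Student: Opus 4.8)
The plan is to exploit the fact that each Milnor primitive $Q_i$ acts on the $A$-module $\AE{2}$ as a signed derivation, together with the explicit action formulas recorded after Proposition \ref{prop:E(n)coaction}. Since $Q_i$ is dual to the primitive $\otau_i$, it is itself primitive in $A$, so for homogeneous elements $a,b$ one has the Leibniz formula
\[
Q_i(ab) = Q_i(a)\,b + (-1)^{|a|}\,a\,Q_i(b).
\]
This is the odd-primary analogue of the Leibniz rule used in \cite{Culver2017}, and it is essentially the only structural input needed.

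First I would write an arbitrary length-$k$ monomial in the form $x = m\cdot \otau_{n_1}\cdots\otau_{n_k}$, where $m\in P_*$ is a monomial in the $\zeta_j$'s and the exterior generators $\otau_{n_1},\ldots,\otau_{n_k}$ are distinct (all with $n_j\geq 3$, after fixing an ordering of the exterior part). By definition $\ell(x)=k$ counts exactly these exterior factors.

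Next I would expand $Q_i(x)$ by iterating the Leibniz formula. Because $Q_i\zeta_j = 0$ for every $j$ and each $\zeta_j$ has even degree, an induction on the number of factors shows $Q_i(m)=0$; hence every surviving term arises from letting $Q_i$ act on exactly one exterior factor $\otau_{n_j}$, producing (up to sign) $m\cdot \otau_{n_1}\cdots Q_i(\otau_{n_j})\cdots\otau_{n_k}$. The crucial point is that, by \eqref{eqn:Q0action}--\eqref{eqn:Q2action}, each $Q_i(\otau_{n_j})$ is a power of a single polynomial generator $\zeta$, which has length $0$. Thus each such term retains exactly the $k-1$ remaining exterior generators and no others, so it lies in $\AE{2}^{(k-1)}$. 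Summing over $j$ gives $Q_i(x)\in \AE{2}^{(k-1)}$, i.e.\ $\ell(Q_i x)=k-1$, as claimed.

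The step requiring the most care --- though it is not deep --- is the bookkeeping of the Leibniz expansion: verifying that $Q_i$ annihilates the polynomial factor $m$, and that the distinct exterior factors force the surviving monomials to be genuinely distinct, so that no cancellation can collapse the answer into lower length or to zero. (Indeed, since the generators $\otau_n$ occurring in $\AE{2}$ all have $n\geq 3$, the formulas \eqref{eqn:Q0action}--\eqref{eqn:Q2action} give $Q_i(\otau_{n_j})\neq 0$ in every case, and the monomials for distinct $j$ differ in their exterior parts.) Once the action formulas of Proposition \ref{prop:E(n)coaction} are in hand, this is a direct computation, and the sign conventions in the Leibniz rule are immaterial to the length count.
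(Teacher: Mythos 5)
Your proof is correct and takes essentially the same approach as the paper, which deduces the proposition from the Leibniz formula for the $Q_i$-action exactly as in the $p=2$ case of \cite{Culver2017}. Your write-up simply makes explicit the bookkeeping the paper leaves implicit: $Q_i$ annihilates the polynomial part, each surviving term of the Leibniz expansion replaces one $\otau$-factor by a power of some $\zeta$ (hence has length $k-1$), and the resulting monomials have distinct exterior parts, so no cancellation can occur.
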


From this lemma, this allows us to put an extra grading on the Margolis homology of $\tBP{2}$. More precisely, length can be regarded as a grading on $\AE{2}$. In this grading, the action by $Q_i$ produces the following chain complex
\[
\begin{tikzcd}
	\cdots \arrow[r] & \AE{2}^{(\ell+1)}\arrow[r, "Q_i"] & \AE{2}^{(\ell)}\arrow[r] & \cdots \arrow[r, "Q_i"] & \AE{2}^{(0)}
\end{tikzcd}
\]
and the homology of this chain complex is $M_*(\AE{2};Q_i)$. This puts a bigrading on the Margolis homology 
\[
M_*(\tBP{2};Q_i) = \bigoplus_{\ell\geq 0}M_{*, \ell}(\tBP{2};Q_i).
\]
 Consequently, we have from \ref{thm:Margolis homology} that
\begin{cor}
	If $\ell>0$ then for $i=0,1,2$, we have 
	\[
	M_{*,\ell}(\tBP{2};Q_i)=0.
	\]
\end{cor}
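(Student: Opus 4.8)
The plan is to use Proposition \ref{prop: Qs lower length by 1}, which tells us that each $Q_i$ is homogeneous of degree $-1$ for the length grading. This makes the length grading a genuine grading of the chain complex computing Margolis homology, so that the homology inherits the bigrading $M_*(\tBP{2};Q_i)=\bigoplus_{\ell\geq 0}M_{*,\ell}(\tBP{2};Q_i)$. The corollary is thus equivalent to the statement that the total Margolis homology is concentrated in length $0$, and this I would deduce directly from Theorem \ref{thm:Margolis homology}.

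The key observation is that in each of the three cases of Theorem \ref{thm:Margolis homology} the Margolis homology is a (truncated) polynomial algebra whose generators are the classes of the elements $\zeta_n$. Since $Q_i\zeta_n=0$ for all $i$ and $n$, each $\zeta_n$ is a cycle, and as $\ell(\zeta_n)=0$ by definition its class lies in $M_{*,0}(\tBP{2};Q_i)$. Because length is additive under multiplication and the $Q_i$ act as derivations (by the Leibniz formula), every monomial $\zeta_1^{i_1}\zeta_2^{i_2}\cdots$ is again a length-$0$ cycle. Hence the span of these representatives, which by Theorem \ref{thm:Margolis homology} is all of the Margolis homology, lies entirely in length $0$.

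I therefore anticipate no real obstacle: the only point requiring attention is that the generators produced in Theorem \ref{thm:Margolis homology} can be represented in length $0$, which is immediate from their explicit description as monomials in the $\zeta_n$. Since the total homology is exhausted by these length-$0$ classes and the length grading splits the homology, it follows that $M_{*,\ell}(\tBP{2};Q_i)=0$ for every $\ell>0$.
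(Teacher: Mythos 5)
Your proof is correct and takes essentially the same route as the paper: the paper derives this corollary immediately from Theorem \ref{thm:Margolis homology} together with the length bigrading furnished by Proposition \ref{prop: Qs lower length by 1}, exactly as you do. The elaboration you supply --- that the Margolis homology is spanned by classes of monomials in the $\zeta_n$, which are cycles of length $0$, so the homology sits entirely in length $0$ --- is precisely the implicit content of the paper's ``Consequently.''
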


Following \cite{Culver2017}, \textsection 2.2, we define 
\[
S:= E(2)\{x\in \AE{2}\mid \ell(x)\geq 3\}.
\]

The same proof as in the prime 2 case gives 

\begin{prop}
	The Margolis homology groups of $S$ are all trivial, consequently $S$ is a free $E(2)$-module. 
\end{prop}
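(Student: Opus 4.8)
The plan is to prove freeness in two stages: first establish that the three Margolis homologies $M_*(S;Q_i)$, $i=0,1,2$, all vanish, and then invoke the standard freeness criterion for bounded-below, locally finite modules over a primitively generated exterior Hopf algebra, which asserts that such an $E(2)$-module is free precisely when its Margolis homology with respect to each $Q_i$ is trivial (cf.\ \cite{bluebook}, and the analogous argument in \cite{Culver2017}). Thus the entire content of the statement is the vanishing of the Margolis homology of $S$, and this is where the length grading does the work.

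First I would note that $S$ is an $E(2)$-submodule of $\AE{2}$ on which each $Q_i$ still lowers length by one, by Proposition \ref{prop: Qs lower length by 1}. Hence, exactly as for $\AE{2}$ itself, the length grading turns $(S,Q_i)$ into a chain complex $\cdots \to S^{(\ell+1)} \xrightarrow{Q_i} S^{(\ell)} \to \cdots \to S^{(0)}$, and the Margolis homology splits as $M_*(S;Q_i)=\bigoplus_\ell M_{*,\ell}(S;Q_i)$, where $M_{*,\ell}(S;Q_i)$ denotes the homology at $S^{(\ell)}$. The essential input is that the corresponding complex for $\AE{2}$ is exact in every positive length: the corollary above gives $M_{*,\ell}(\tBP{2};Q_i)=0$ for $\ell>0$, i.e.\ $\ker\bigl(Q_i|_{\AE{2}^{(\ell)}}\bigr)=Q_i\AE{2}^{(\ell+1)}$ for all $\ell\geq 1$. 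I would then identify $S^{(\ell)}$ explicitly: since $S$ is generated by the monomials of length $\geq 3$ and $Q_i$ only lowers length, one checks that $S^{(\ell)}=\AE{2}^{(\ell)}$ for $\ell\geq 3$, while $S^{(2)}=\sum_i Q_i\AE{2}^{(3)}$, $S^{(1)}=\sum_{i<j}Q_iQ_j\AE{2}^{(3)}$, and $S^{(0)}=Q_0Q_1Q_2\AE{2}^{(3)}$.

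For $\ell\geq 3$ the homology of $(S,Q_i)$ at $S^{(\ell)}$ literally equals $M_{*,\ell}(\AE{2};Q_i)$, which vanishes. The instructive case is $\ell=2$: ambient exactness identifies $Q_i\AE{2}^{(3)}$ with $\ker\bigl(Q_i|_{\AE{2}^{(2)}}\bigr)$, so this kernel is contained in $S^{(2)}$, whence the cycles $\ker\bigl(Q_i|_{S^{(2)}}\bigr)$ coincide with $\ker\bigl(Q_i|_{\AE{2}^{(2)}}\bigr)=Q_i\AE{2}^{(3)}$, which is exactly the boundary subspace $\operatorname{im}\bigl(Q_i\colon S^{(3)}\to S^{(2)}\bigr)$; thus $M_{*,2}(S;Q_i)=0$. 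The same pattern of rewriting a kernel as an image via the corollary is what one repeats at the remaining lengths.

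The hard part will be the two bottom lengths $\ell=0,1$, where the differential genuinely mixes the several generators $Q_i$ and $S^{(\ell)}$ is assembled from products $Q_iQ_j$ applied to $\AE{2}^{(3)}$; here the cancellation between a would-be cycle such as $Q_1Q_2c$ and the boundary subspace $Q_0Q_1\AE{2}^{(3)}+Q_0Q_2\AE{2}^{(3)}$ is not formal. I would extract it from the ambient exactness by iterated kernel-to-image rewriting — for instance $Q_0Q_1Q_2c=0$ forces $Q_1Q_2c\in\ker\bigl(Q_0|_{\AE{2}^{(1)}}\bigr)=Q_0\AE{2}^{(2)}$ — combined with the relations $Q_i^2=0$ and $Q_iQ_j=-Q_jQ_i$ in $E(2)$. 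Equivalently, passing to the long exact Margolis sequence of $0\to S\to \AE{2}\to \AE{2}/S\to 0$, this amounts to showing that $\AE{2}/S$ has Margolis homology concentrated in length $0$ and that the induced map $M_*(\AE{2};Q_i)\to M_*(\AE{2}/S;Q_i)$ is an isomorphism, which again comes down to the same low-length bookkeeping. Once every $M_{*,\ell}(S;Q_i)$ is shown to vanish, the freeness criterion finishes the proof; as in \cite{Culver2017}, no genuinely new phenomena arise at odd primes beyond tracking the signs in $E(2)$.
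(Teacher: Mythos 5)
Your overall frame is the right one, and it matches the argument the paper imports from \cite{Culver2017}: grade $S$ by length, use the vanishing of $M_{*,\ell}(\AE{2};Q_i)$ for $\ell>0$, and finish with the criterion that vanishing Margolis homology implies freeness. Your identification of $S^{(\ell)}$ and your treatment of lengths $\ell\geq 2$ are correct. But there is a genuine gap exactly at the point you label ``the hard part'': the case $\ell=1$ is never actually proved, and the method you propose for it (iterated kernel-to-image rewriting from ambient exactness, together with $Q_i^2=0$ and $Q_iQ_j=-Q_jQ_i$) does not suffice. Modulo boundaries, a $Q_0$-cycle in $S^{(1)}$ has the form $Q_1Q_2c$ with $c\in\AE{2}^{(3)}$ and $Q_0Q_1Q_2c=0$; your rewriting yields $Q_1Q_2c\in\ker\bigl(Q_0|_{\AE{2}^{(1)}}\bigr)=Q_0\AE{2}^{(2)}$, but what must be shown is the finer statement $Q_1Q_2c\in Q_0S^{(2)}=Q_0Q_1\AE{2}^{(3)}+Q_0Q_2\AE{2}^{(3)}$, i.e.\ that a $Q_0$-preimage can be chosen \emph{inside} $S^{(2)}$ rather than merely in $\AE{2}^{(2)}$. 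No iteration of the separate exactness statements for $Q_0,Q_1,Q_2$ visibly bridges this, and it is not a formal consequence of them: writing $\AE{2}=P\otimes E(\otau_3,\otau_4,\ldots)$ with $P=\F_p[\zeta_1,\zeta_2,\ldots]$, each $Q_i$ is the $P$-linear Koszul differential of the sequence $(Q_i\otau_n)_{n\geq 3}$, the map $Q_0Q_1Q_2$ identifies $\ker\bigl(Q_0Q_1Q_2|_{\AE{2}^{(3)}}\bigr)$ with the syzygy module of the $3\times 3$ minors of the matrix $(Q_i\otau_n)_{0\leq i\leq 2,\,n\geq 3}$, and the $\ell=1$ vanishing is essentially the assertion that every such syzygy lies in $Q_0\AE{2}^{(4)}+Q_1\AE{2}^{(4)}+Q_2\AE{2}^{(4)}$ up to elements killed by $Q_1Q_2$. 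That is a statement about these particular sequences, not about three anticommuting differentials each exact in positive lengths; it is the real content of the proposition, which the paper outsources to the proof of Proposition 2.21 of \cite{Culver2017}, and which your proposal acknowledges but does not supply.

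Two smaller points. First, the case $\ell=0$, which you also defer to the hard part, is in fact formal: every element of $S^{(0)}=Q_0Q_1Q_2\AE{2}^{(3)}$ equals $\pm Q_i\bigl(Q_jQ_kc\bigr)$ with $Q_jQ_kc\in S^{(1)}$, so every cycle there is already a boundary. Second, your reformulation via the long exact Margolis sequence of $0\to S\to\AE{2}\to\AE{2}/S\to 0$ is a correct equivalence, but, as you say yourself, it relocates rather than resolves the same $\ell=1$ problem, so it does not close the gap.
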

\begin{proof}
	The proof is mutatis mutandis the proof of Proposition 2.21 in \cite{Culver2017}.
\end{proof}
This leads us to consider the following short exact sequence, 
\[
0\to S\to \AE{2}\to Q\to 0.
\]
Since the Margolis homology of $S$ is trivial, it follows that $S$ is free. By \cite[ch. 15, Theorem 27]{spectraMargolis}, a module over a finite Hopf algebra over a field $k$ (such as $E(2)$) is free if and only if it is injective. Consequently, we get a splitting of $\AE{2}$, 
\[
\AE{2}\cong_{E(2)}S\oplus Q.
\]
All that remains to show is that the Ext groups of $Q$ are $v_2$-torsion free. We do so in the next subsection.

\subsection{The $v_2$-Bockstein spectral sequence}

We proceed as in section 2 of \cite{Culver2017}. First, note that 
\[
E(2)\mmod E(1)_*\cong E(\otau_2)
\]
where, as an $E(2)_*$-comodule, the coaction on $\otau_2$ is 
\[
\alpha(\otau_2) = \otau_2\otimes 1+1\otimes \otau_2.
\]
Note that we have a short exact sequence of $E(2)_*$-comodules 
\begin{equation}\label{eq4.2:SES}
	0\to \F_p\to E(2)\mmod E(1)_*\to \Sigma^{2p^2-1}\F_p\to 0.
\end{equation}

\begin{prop}
	The connecting homomorphism for the short exact sequence \eqref{eq4.2:SES} induces multiplication by $v_2$ on $\Ext_{E(2)_*}$ (up to a sign).
\end{prop}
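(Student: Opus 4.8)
The plan is to compute the connecting homomorphism directly in the cobar complex and match it with the standard cobar representative of $v_2$. Recall that for a short exact sequence of $E(2)_*$-comodules $0\to A\to B\to C\to 0$, the connecting homomorphism $\delta\colon \Ext^s_{E(2)_*}(C)\to \Ext^{s+1}_{E(2)_*}(A)$ is computed by the snake lemma applied to the associated short exact sequence of cobar complexes, which is exact because each cobar term is a free $\F_p$-module. Concretely, one represents a class by a cobar cocycle in $C^\bullet(C)$, lifts it to a cochain in $C^\bullet(B)$, applies the cobar differential, and observes that the result descends to a cocycle in $C^\bullet(A)$.

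First I would run this recipe for \eqref{eq4.2:SES}. Since the surjection $E(\otau_2)\to \Sigma^{2p^2-1}\F_p$ sends $\otau_2$ to the generator and kills $1$, a cobar cocycle $z=[a_1|\cdots|a_s]$ representing a class in $\Ext^s_{E(2)_*}(\Sigma^{2p^2-1}\F_p)$ lifts to the cochain $\td z=[a_1|\cdots|a_s]\otau_2$ in $C^\bullet(E(\otau_2))$, where $\otau_2$ now occupies the comodule slot. Next I would apply the cobar differential to $\td z$. Because $dz=0$, all terms arising from the comultiplications of the $a_i$ assemble into a cobar coefficient that is identically zero, so they drop out; the only surviving contribution comes from the reduced coaction on the comodule coordinate $\otau_2$. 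Using the primitivity $\alpha(\otau_2)=\otau_2\otimes 1+1\otimes\otau_2$, the summand $1\otimes\otau_2$ dies upon projecting the first tensor factor to $\br{E(2)_*}$, leaving exactly $\pm[a_1|\cdots|a_s|\otau_2]\cdot 1$. This cochain lies in the image of $C^{s+1}(\F_p)$, so $\delta[z]=\pm[a_1|\cdots|a_s|\otau_2]$.

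Finally, I would recognize $[\otau_2]\in C^1(\F_p)$ as a cobar cocycle, since $\otau_2$ is primitive in $E(2)_*$ and hence $d[\otau_2]=0$, representing the generator $v_2\in \Ext^{1,2p^2-1}_{E(2)_*}(\F_p)$; and I would recognize the concatenation $z\mapsto [a_1|\cdots|a_s|\otau_2]$ as the cobar model for the Yoneda product with $v_2$. This identifies $\delta$ with multiplication by $v_2$ up to sign. As an independent sanity check, one can argue abstractly that $\delta$ is the Yoneda product with the extension class $[e]\in \Ext^{1}_{E(2)_*}(\Sigma^{2p^2-1}\F_p,\F_p)\cong \Ext^{1,2p^2-1}_{E(2)_*}(\F_p)=\F_p\{v_2\}$, and that \eqref{eq4.2:SES} is non-split for degree reasons, forcing $[e]$ to be a unit multiple of $v_2$.

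The main obstacle I anticipate is purely the Koszul-sign bookkeeping: pinning down the precise signs in the cobar differential and in the identification of concatenation with the Yoneda product, which is exactly why the statement is phrased only up to a sign. The argument is otherwise entirely parallel to the corresponding computation at $p=2$ in \cite{Culver2017}.
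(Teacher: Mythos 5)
Your proof is correct and follows essentially the same route as the paper's: lift a cobar cocycle $z$ for $\Sigma^{2p^2-1}\F_p$ to $[a_1|\cdots|a_s]\otau_2$, apply the cobar differential, note that the terms from the $\psi(a_j)$ vanish since $dz=0$, and identify the surviving concatenation with $\otau_2$ as multiplication by $v_2$ up to sign. The closing remark identifying the connecting map with the Yoneda product against the extension class is a nice extra consistency check but is not needed.
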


\begin{rmk}
	The only real difference in the proof below between the one found in \cite{Culver2017} is that we must keep track of signs.
\end{rmk}

\begin{proof}
	The short exact sequence \eqref{eq4.2:SES} gives a short exact sequence of cobar complexes, 
		\[
			0 \to C^\bullet_{E(2)_*}\F_p\to  C^\bullet_{E(2)_*}(E(2)\mmod E(1)_*)\to  C^\bullet_{E(2)_*}(\Sigma^{2p^2-1}\F_p)\to  0
		\]
		Let $z$ be a cocycle in the cobar complex for $\Sigma^{2p^2-1}\F_p$. So 
		\[
		z= \sum_i [a_{1i}\mid \cdots \mid a_{si}]
		\]
		For $z$ to be a cocycle, one must have 
		\[
		dz = \sum_i \sum_{j=1}^s(-1)^j[a_{1i}\mid \ldots \mid \psi(a_{ji})\mid \ldots \mid a_{si}]=0
		\]
		A lift of $z$ to the cobar complex for $E(2)\mmod E(1)_*$ is the element
		\[
		\widetilde{z} = \sum_i[a_{1i}\mid \ldots \mid a_{si}]\otau_2.
		\]
		The cobar differential on this lift is then 
		\begin{align*}
			d\widetilde{z} &= \sum_i \left( \sum_{j=1}^s(-1)^j[a_{1i}\mid \cdots \mid \psi(a_{ji})\mid \cdots \mid a_{si}]\otau_2 + (-1)^{s+1}[a_{1i}\mid \cdots \mid a_{si}\mid \otau_2]\right)\\
						  &= (-1)^{s+1}\sum_i [a_{1i}\mid \cdots \mid a_{si}\mid \otau_2]
		\end{align*}
		The second equality follows from the fact that $z$ is a cocycle. So the connecting homomorphism is the result of concatenations with $\otau_2$ in the cobar complex, which gives multiplication by $v_2$ (at least up to sign).
\end{proof}

This allows us to draw the following conclusion.

\begin{cor}
	In the derived category of $E(2)_*$-comodules, there is a distinguished triangle
	\[
	\begin{tikzcd}
		\Sigma^{2p^2-1}\F_p[-1]\arrow[r,"\cdot v_2"] & \F_p\arrow[r] & E(2)\sslash E(1)_*\arrow[r] & \Sigma^{2p^2-1}\F_p
	\end{tikzcd}
	\]
\end{cor}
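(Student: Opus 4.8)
The plan is to obtain the triangle directly from the short exact sequence \eqref{eq4.2:SES} and then identify its connecting morphism via the preceding proposition. The category of $E(2)_*$-comodules is abelian (indeed $E(2)_*$ is a finite-dimensional Hopf algebra over $\F_p$), so it has a derived category $D(E(2)_*)$ in which every short exact sequence of comodules canonically determines a distinguished triangle. Applying this to \eqref{eq4.2:SES} produces
\[
\F_p \longrightarrow E(2)\sslash E(1)_* \longrightarrow \Sigma^{2p^2-1}\F_p \xrightarrow{\ \partial\ } \F_p[1],
\]
and rotating this triangle once yields
\[
\Sigma^{2p^2-1}\F_p[-1] \xrightarrow{\ \partial[-1]\ } \F_p \longrightarrow E(2)\sslash E(1)_* \longrightarrow \Sigma^{2p^2-1}\F_p,
\]
which already has the shape claimed in the statement. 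Thus it remains only to identify the connecting morphism $\partial[-1]$ with the map labelled $\cdot v_2$.

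For this I would invoke the standard computation of Ext as derived Hom. For comodules $M$ and $N$ one has, keeping track of the internal grading,
\[
\Ext^{s,t}_{E(2)_*}(M,N) \cong \operatorname{Hom}_{D(E(2)_*)}\bigl(\Sigma^{t}M[-s],\, N\bigr).
\]
Taking $M=N=\F_p$, $s=1$, and $t=2p^2-1$, a morphism $\Sigma^{2p^2-1}\F_p[-1]\to \F_p$ in $D(E(2)_*)$ is precisely the datum of a class in $\Ext^{1,2p^2-1}_{E(2)_*}(\F_p,\F_p)$. Under this dictionary the rotated connecting morphism $\partial[-1]$ corresponds to the connecting homomorphism of \eqref{eq4.2:SES} on Ext, which the previous proposition computes to be multiplication by $v_2$ (up to sign). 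Since $v_2\in \Ext^{1,2p^2-1}_{E(2)_*}(\F_p,\F_p)$, this is exactly the class represented by the arrow $\cdot v_2$; as the sign ambiguity does not alter the isomorphism type of the triangle, we may record the connecting map as $\cdot v_2$.

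The only genuinely delicate point is this last identification: one must check that the abstract connecting morphism of the distinguished triangle agrees, under the isomorphism $\Ext^{1}\cong \operatorname{Hom}_{D}(\Sigma^{t}(-)[-1],-)$, with the connecting homomorphism on Ext that the proposition evaluates. This is a formal compatibility between the long exact Ext sequence attached to \eqref{eq4.2:SES} and the long exact sequence obtained by applying $\operatorname{Hom}_{D(E(2)_*)}(-,\F_p[\ast])$ to the triangle, and it holds because both are induced by the same short exact sequence of cobar complexes used in the proof of the proposition. Everything else is the routine bookkeeping of rotating a triangle and matching internal degrees, the latter being forced by the suspension $\Sigma^{2p^2-1}$ already present in \eqref{eq4.2:SES}.
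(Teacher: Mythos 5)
Your proposal is correct and follows exactly the route the paper intends: the paper states this corollary as an immediate consequence of the preceding proposition (offering no separate written proof), namely that the short exact sequence \eqref{eq4.2:SES} yields a distinguished triangle in the derived category of $E(2)_*$-comodules whose connecting morphism is identified, via $\Ext^{1,2p^2-1}_{E(2)_*}(\F_p,\F_p)\cong \operatorname{Hom}_{D(E(2)_*)}(\Sigma^{2p^2-1}\F_p[-1],\F_p)$, with multiplication by $v_2$. Your write-up simply makes explicit the rotation of the triangle and the compatibility of the abstract connecting morphism with the connecting homomorphism on $\Ext$, which is the standard bookkeeping the paper leaves implicit.
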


Tensoring with $Q$ thus gives the following distinguished triangle
\[
\begin{tikzcd}
		\Sigma^{2p^2-1}Q[-1]\arrow[r,"\cdot v_2"] & Q\arrow[r] & Q\otimes E(2)\sslash E(1)_*\arrow[r] & \Sigma^{2p^2-1}Q
	\end{tikzcd}
\]
from which we can build the exact couple giving the $v_2$-BSS:
\[
E_1^{*,*,*}=\Ext^{*,*}_{E(1)}(Q)\otimes \F_p[v_2]\implies \Ext_{E(2)_*}(Q).
\]
with 
\[
E_1^{s,t,r} = \Ext_{E(1)}^{s,t}(Q)\otimes \F_p\{v_2^r\}.
\]
The group $E_\infty^{s,t,r}$ contributes to $\Ext^{s+r, t+(2p^2-1)r}_{E(2)_*}(Q)$ and the differentials are of the form 
\[
d_k:E_k^{s,t,r}\to E_k^{s-k+1,t-(2p^2-1)k, r+k},
\]
so that in Adams indexing, the differential looks like an Adams $d_1$-differential.

As in \cite{Culver2017} its enough to show that $\Ext_{E(1)}^{s,t}(Q)$ is concentrated in even $(t-s)$-degree. Note that there is a length filtration on $Q$ induced from the one on $\AE{2}$, and that all lengths are 0,1,2. Define $S'$ to be the $E(1)$-submodule of $Q$ generated by elements of length 2, i.e.
\[
S' := E(1)\{x\in Q\mid \ell(x)=2\}.
\]

Since the Margolis homology of $Q$ is isomorphic to the Margolis homology of $\AE{2}$, $Q$'s Margolis homology it is concentrated in  length 0 (and hence in even degree). We will argue the $Q_0$ and $Q_1$-Margolis homology of $S'$ is trivial. 

\begin{prop}
	The Margolis homology groups $M_*(S';Q_0)$ and $M_*(S';Q_1)$ are both trivial. Consequently, $S'$ is a free $E(1)$-module.
\end{prop}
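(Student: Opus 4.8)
The plan is to compute the $Q_0$- and $Q_1$-Margolis homology of $S'$ directly and show both vanish; freeness then follows from the same criterion used for $S$, namely that a module over the finite-dimensional Hopf algebra $E(1)$ is free if and only if all its Margolis homologies vanish (cf.~\cite[ch.~15, Theorem 27]{spectraMargolis}). First I would make the generators of $S'$ explicit. By definition $S' = E(1)\{x\in Q\mid \ell(x)=2\}$, and an element of length $2$ in $Q$ (equivalently, in $\AE{2}$ modulo $S$, where $S$ collects lengths $\geq 3$) is a monomial of the form $m = \zeta_1^{i_1}\zeta_2^{i_2}\cdots \cdot \otau_a\otau_b$ with exactly two of the exterior generators $\otau_n$ (with $n\geq 3$) present. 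Since $E(1)=E(Q_0,Q_1)$ is a four-dimensional algebra on basis $1, Q_0, Q_1, Q_0Q_1$, the module $S'$ is spanned by $m, Q_0m, Q_1m, Q_0Q_1m$ over these length-$2$ generators $m$.

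The key computational input is Proposition~\ref{prop: Qs lower length by 1}, which says each $Q_i$ drops length by exactly $1$, together with the explicit action formulas \eqref{eqn:Q0action} and \eqref{eqn:Q1action}. Applying $Q_0$ or $Q_1$ to a length-$2$ generator produces a length-$1$ element (via the Leibniz rule, hitting one of the two $\otau$ factors and replacing it by a $\zeta$-power), and applying a second $Q_i$ lands in length $0$. The essential structural point is that within the length grading, $S'$ is \emph{concentrated} in lengths $0,1,2$, and the $E(1)$-action realizes $S'$ as the total complex associated to the $Q_0$, $Q_1$ action on these generators. Because the ambient Margolis homology $M_*(\tBP{2};Q_i)$ is concentrated in length $0$ (by the corollary following Proposition~\ref{prop: Qs lower length by 1}), the contributions to $M_*(S';Q_i)$ coming from generators of length $2$ must cancel against the images and kernels in lower length. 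Concretely, I would verify that $Q_0$ (resp.\ $Q_1$) acts injectively modulo its own image on the relevant length-graded pieces of $S'$, exactly mirroring the vanishing of higher-length Margolis homology for $\AE{2}$.

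The cleanest way to organize this is to observe that $S'$ is an extension built from length-graded pieces and to run the computation one $Q_i$ at a time, using that $S'$ is free over $E(0)=E(Q_0)$ and over $E(Q_1)$ separately; freeness over the separate exterior algebras on a single primitive is equivalent to the vanishing of the corresponding single Margolis homology, and this is where the explicit formulas \eqref{eqn:Q0action}–\eqref{eqn:Q1action} are used to exhibit the pairing of basis elements. I expect the main obstacle to be bookkeeping: verifying that the Leibniz-rule images of distinct length-$2$ generators are linearly independent and exhaust the length-$1$ and length-$0$ kernels, so that no class survives in Margolis homology. This is entirely analogous to the freeness argument for $S$ (Proposition preceding this one, whose proof was \emph{mutatis mutandis} the $p=2$ case), and I would cite \cite[Proposition 2.21]{Culver2017} and the pattern of \cite[Lemma 16.9]{bluebook} to dispatch the routine verifications, keeping careful track of the truncation that distinguishes $S'$ (lengths $0,1,2$) from the full module $S$ (lengths $\geq 3$).
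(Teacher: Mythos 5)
Your proposal assembles the right ingredients (the length grading, the fact that each $Q_i$ lowers length by exactly one, the concentration of the ambient Margolis homology in length $0$, and the freeness criterion over $E(1)$), but the decisive step is never actually argued. The proposition is a \emph{transfer} statement: a $Q_i$-cycle in $S'$ that bounds in $Q$ must be shown to bound \emph{inside} $S'$. The paper's proof does this by a three-case analysis on the length of a cycle $x\in S'$ with $Q_0x=0$. If $\ell(x)=1$, concentration of $M_*(Q;Q_0)$ in length $0$ gives $y\in Q$ with $Q_0y=x$; since $Q_0$ lowers length by one, $\ell(y)=2$, so $y\in S'$ by the very definition $S'=E(1)\{x\in Q\mid \ell(x)=2\}$, and $x$ bounds in $S'$. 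If $\ell(x)=0$, then the length-$0$ part of $S'$ is exactly $Q_0Q_1$ applied to length-$2$ elements, so $x=Q_0(Q_1y)$ with $Q_1y\in S'$, again a boundary in $S'$. If $\ell(x)=2$, then $x=0$, because $M_{*,2}(Q;Q_0)=0$ while $Q$ contains nothing of length $3$ that could bound $x$. Your phrases ``must cancel against the images and kernels in lower length'' and ``acts injectively modulo its own image'' merely restate the vanishing to be proven; they do not supply this argument, and the length-$2$ case (why such a cycle is actually zero) is never addressed in your sketch at all.

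The fallback you propose --- a bookkeeping verification that the $E(1)$-translates $m, Q_0m, Q_1m, Q_0Q_1m$ of the length-$2$ generators are linearly independent, ``dispatched'' by citing \cite[Proposition 2.21]{Culver2017} and \cite[Lemma 16.9, pg 341]{bluebook} --- is where the gap becomes concrete. Those results concern, respectively, the submodule $S$ generated in lengths $\geq 3$ and the full ambient algebra; neither says anything about $S'$, and the content of the present proposition is precisely the passage from ambient vanishing to the sub-$E(1)$-module $S'$, which those citations cannot provide (the paper's own model is \cite[Proposition 2.15]{Culver2017}, not 2.21). A smaller inaccuracy points the same way: elements of $Q$ are residue classes modulo $S$, so the length-$2$ monomials $m\otau_a\otau_b$ are only a spanning set, not a basis, of the length-$2$ part of $Q$; arguing with classes in $Q$ and their lengths, as the paper does, is what lets one avoid exactly the kind of monomial-level bookkeeping your plan defers and never executes.
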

\begin{proof}
	The proof is analogous to the one for Proposition 2.15 in \cite{Culver2017}. Suppose that $x\in S'$ is such that $Q_0x=0$, so $x$ is a $Q_0$-cycle. If $\ell(x)=0$, then the only way for $x\in S'$ is if there is a $y\in Q$ with $\ell(y)=2$ and $Q_0Q_1y=x$. So $x$ represents 0 in $M_*(S';Q_0)$ in this case. 
	
	Suppose then that $\ell(x)=1$. Since the Margolis homology of $Q$ is concentrated in length 0, it follows that there is a $y\in Q$ with $Q_0y=x$. Since $\ell(x)=1$, it follows that $\ell(y)=2$, and so $y\in S'$. Thus $x$ represents the zero class in $M_*(S';Q_0)$. Finally consider $\ell(x)=2$. It follows that $x=0$ in $Q$. So $M_*(S';Q_0)=0$. An analogous argument shows $M_*(S';Q_1)=0$.
\end{proof}

\begin{cor}
	Define $\overline{Q}$ by the short exact sequence of $E(1)_*$-comodules
	\[
	0\to S'\to Q\to \overline{Q}\to 0.
	\]
	Then, as $S'$ is free, and hence injective, we have a splitting of $E(1)_*$-comodules,
	\[
	Q\simeq_{E(1)_*} S'\oplus \overline{Q}.
	\]
\end{cor}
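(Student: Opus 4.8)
The plan is to obtain the splitting formally from injectivity, in exact parallel with the splitting of $\AE{2}$ over $E(2)$ carried out above. The essential input has already been supplied by the preceding proposition: since $M_*(S';Q_0)$ and $M_*(S';Q_1)$ both vanish, $S'$ is a free $E(1)$-module.

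First I would record the passage between modules and comodules. Because $E(1)$ is a finite-dimensional Hopf algebra over $\F_p$, its dual $E(1)_* \cong \mathrm{Hom}_{\F_p}(E(1),\F_p)$ is finite-dimensional, so for any (not necessarily finite-dimensional) vector space $M$ a left coaction $M \to E(1)_* \otimes M$ transposes to a left action $E(1) \otimes M \to M$. This yields an exact equivalence between the category of $E(1)_*$-comodules and the category of $E(1)$-modules which preserves injectives, and under it the short exact sequence $0 \to S' \to Q \to \overline{Q} \to 0$ of $E(1)_*$-comodules corresponds to a short exact sequence of $E(1)$-modules.

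Next I would invoke \cite[ch. 15, Theorem 27]{spectraMargolis}: a module over a finite-dimensional Hopf algebra over a field is free if and only if it is injective. Thus the free module $S'$ is injective, and hence injective also as an $E(1)_*$-comodule. The conclusion is then purely formal: applying the defining property of an injective object to the monomorphism $S' \hookrightarrow Q$ together with the identity map of $S'$ produces a comodule retraction $r \colon Q \to S'$ restricting to the identity on $S'$. Such a retraction splits the short exact sequence, whence $Q \simeq_{E(1)_*} S' \oplus \overline{Q}$.

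I do not expect any genuine obstacle here, since the real work was done in establishing freeness of $S'$ via the vanishing of its $Q_0$- and $Q_1$-Margolis homology. The only point that warrants care is the translation between \emph{free}, the notion naturally produced by the Margolis homology computation, and \emph{injective}, the notion that yields a comodule splitting; this translation is exactly the module--comodule equivalence above and is legitimate precisely because $E(1)$ is finite-dimensional. It is the same device already used for the $E(2)$-comodule splitting of $\AE{2}$.
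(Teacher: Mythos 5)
Your proposal is correct and follows essentially the same route as the paper: the paper's splitting also rests on the freeness of $S'$ established from the vanishing of its $Q_0$- and $Q_1$-Margolis homology, together with the fact (citing Margolis, ch.~15, Theorem~27) that free modules over a finite Hopf algebra are injective, which splits the inclusion $S' \hookrightarrow Q$. Your added remarks on the module--comodule dictionary for the finite-dimensional algebra $E(1)$ only make explicit what the paper leaves implicit.
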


\begin{prop}\label{prop: S' even degrees}
	The $\Ext$-groups of $S'$ are concentrated in even degree.
\end{prop}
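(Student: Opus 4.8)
The plan is to use the freeness of $S'$ from the previous proposition to collapse the computation onto homological degree $0$, and then to settle the parity by a degree count. Since $E(1)$ is a finite-dimensional graded Hopf algebra over $\F_p$, freeness is equivalent to injectivity, so $S'$ is injective as an $E(1)$-module. For an injective module $M$ one may compute $\Ext_{E(1)}^{s,t}(\F_p, M)$ by resolving $\F_p$ by projectives and applying the exact functor $\operatorname{Hom}_{E(1)}(-,M)$; this shows that $\Ext_{E(1)}^{s,t}(\F_p, S') = 0$ for $s>0$ and that $\Ext_{E(1)}^{0,t}(\F_p, S') = \operatorname{Hom}_{E(1)}(\F_p, S')$, the socle of $S'$. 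Thus the whole of $\Ext_{E(1)}(S')$ sits in filtration $s=0$, and it remains only to verify that this socle is concentrated in even internal degree.

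To compute the socle, I would write $S' \cong E(1)\otimes_{\F_p} W$, where $W$ is the graded vector space of module generators, namely a basis of the length-$2$ elements of $Q$. Since the socle of $E(1)$ is spanned by the top class $Q_0Q_1$, the socle of $S'$ is $Q_0Q_1\cdot W$. By Proposition \ref{prop: Qs lower length by 1} each $Q_i$ lowers length by one, so $Q_0Q_1$ carries a length-$2$ generator to a length-$0$ element, i.e.\ to a monomial purely in the $\zeta_n$. As $|\zeta_n| = 2(p^n-1)$ is even, every socle element lies in even internal degree, whence $\Ext_{E(1)}(S')$ is concentrated in even $(t-s)$-degree. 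Equivalently, one checks that $\Ext_{E(1)}(\F_p, E(1)) = \F_p$ is concentrated in bidegree $(0, 2p)$ (with $|Q_0Q_1| = |Q_0|+|Q_1| = 1+(2p-1) = 2p$), so that $\Ext_{E(1)}(S')\cong \Sigma^{2p}W$; here $W$ is even because a length-$2$ monomial carries exactly two odd exterior factors $\otau_m$ together with even factors $\zeta_i$.

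The only non-formal point — and hence the main obstacle — is the parity of the socle, and this has been reduced to the bookkeeping just described: the top-class multiplication lands in the length-$0$ part, a polynomial algebra on the even classes $\zeta_n$. Everything else is a standard consequence of freeness over a finite-dimensional Hopf algebra, exactly as in the proof of the analogous statement in \cite{Culver2017}.
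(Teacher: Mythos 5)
Your proposal is correct and takes essentially the same route as the paper: freeness (hence injectivity) of $S'$ over the finite Hopf algebra $E(1)$ collapses everything to $\Ext^0$, which is the socle $Q_0Q_1\cdot W$ on generators $W$ coming from even-degree length-two elements, and parity follows since $|Q_0Q_1|=2p$ is even (equivalently, since $Q_0Q_1$ lands in the length-$0$ part spanned by the even-degree $\zeta_n$'s). The only slip is the direction of the shift: because the $Q_i$ lower degree in homology, the socle is $\Sigma^{-2p}W$ rather than $\Sigma^{2p}W$, which does not affect the parity conclusion.
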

\begin{proof}
	As $S'$ is a free $E(1)$-module, its Ext groups are concentrated in $\Ext^0$, so we just need to check that $\Ext^0$ is concentrated in even $(t-s)$-degree. A set of generators of $S'$ are the images of $m\otau_i\otau_j$ with $i\neq j$ where $m$ is a monomial of length 0. Since the degree of $\otau_n$ is $2p^n-1$, it follows that $m\otau_i\otau_j$ is in even degree. Since the action of the $Q_i$ changes degree by an odd number, it follows that the bottom cells of the $E(1)$-module generated by (the image) of $m\otau_i\otau_j$ is in even degree. So $\Ext^0_{E(1)_*}(S')$ is concentrated in even degree.
\end{proof}

We are left with showing that the Ext-groups for $\overline{Q}$ are concentrated in even degree. In order to do this, it is necessary to use the Adams-Priddy calculation of the Picard group of stable $E(1)$-modules (\cite{uniquenessBSO}). A review of the necessary details can be found in section 2 of \cite{Culver2017}. In particular, we will show that $\overline{Q}$ splits as a direct sum of invertible $E(1)$-modules. First, let $R$ denote the subalgebra of $\AE{2}$ given by 
\[
R = P(\zeta_2, \zeta_3, \ldots)\otimes E(\otau_3, \otau_4, \ldots)\subseteq \AE{2}.
\]
Note that the monomials of $R$ are those whose weight\footnote{The concept of weight is reviewed below.} is divisible by $p^2$. Moreover, $R$ is a $E(1)_*$-subcomodule algebra of $\AE{2}$. 

\begin{lem}
	As an $E(1)_*$-comodule algebra, $\AE{2}$ decomposes as 
	\[
	\AE{2}\cong_{E(1)_*}P(\zeta_1)\otimes R.
	\]
\end{lem}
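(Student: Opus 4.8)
The plan is to recognize that the asserted isomorphism is an identity of underlying algebras, so the only genuine content is compatibility with the $E(1)_*$-coaction. Since $\AE{2} = \F_p[\zeta_1,\zeta_2,\ldots]\otimes E(\otau_3,\otau_4,\ldots)$ while $R = P(\zeta_2,\zeta_3,\ldots)\otimes E(\otau_3,\otau_4,\ldots)$, the multiplication map
\[
\mu\colon P(\zeta_1)\otimes R\to \AE{2}
\]
is already an isomorphism of graded $\F_p$-algebras. I would therefore first record the $E(1)_*$-coaction on the algebra generators, obtained by restricting the coproduct of $A_*$ to $\AE{2}$ and then projecting the left tensor factor onto $E(1)_*=E(\otau_0,\otau_1)$, exactly as in the procedure defining the $E(n)_*$-coaction (the diagram preceding Proposition \ref{prop:E(n)coaction}), now with target $E(1)_*$. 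From \eqref{eqn2:coprod-zetas} and \eqref{eqn2:coprod-otaus}, and using that the projection $\pi$ annihilates every $\zeta_j$ with $j\geq 1$ and every $\otau_j$ with $j\geq 2$, this yields
\[
\alpha(\zeta_k) = 1\otimes\zeta_k \ \ (k\geq 1), \qquad \alpha(\otau_k) = 1\otimes\otau_k + \otau_0\otimes\zeta_k + \otau_1\otimes\zeta_{k-1}^{p}\ \ (k\geq 3).
\]

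Next I would verify that $P(\zeta_1)$, with the trivial coaction, and $R$ are $E(1)_*$-subcomodule algebras of $\AE{2}$. For $P(\zeta_1)$ this is immediate from $\alpha(\zeta_1)=1\otimes\zeta_1$. For $R$, whose algebra generators are the $\zeta_k$ with $k\geq 2$ and the $\otau_k$ with $k\geq 3$, the point is that all right-hand factors occurring in the displayed coaction lie in $R$: one has $\zeta_k\in R$ for $k\geq 2$, and for $k\geq 3$ one has $\zeta_{k-1}^{p}\in R$ precisely because $k-1\geq 2$. The decisive observation is that $\zeta_1$ never appears on the right-hand side of the coaction of any generator of $R$, which is exactly what permits the polynomial generator $\zeta_1$ to be split off as a trivial-coaction tensor factor.

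Finally, I would conclude with the comodule-algebra formalism. As $\tBP{2}$ is a commutative ring spectrum, $\AE{2}$ is an $E(1)_*$-comodule algebra, so its coaction $\alpha$ is an algebra map. Giving $P(\zeta_1)\otimes R$ the tensor-product comodule structure (using the multiplication of $E(1)_*$ and the swap), a direct computation in Sweedler notation shows that for subcomodule algebras the map $\mu$ satisfies $\alpha\circ\mu = (1\otimes\mu)\circ\alpha_{P(\zeta_1)\otimes R}$, simply because $\alpha(xy)=\alpha(x)\alpha(y)$. Hence $\mu$ is a map of $E(1)_*$-comodules, and being already an algebra isomorphism it is the desired isomorphism of $E(1)_*$-comodule algebras.

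I expect no serious obstacle: the argument reduces entirely to the index bookkeeping of the second step, namely that the coaction of each $\otau_k$ with $k\geq 3$ stays inside $R$ and in particular produces no $\zeta_1$. The only mild subtlety worth spelling out is the convention for the tensor-product comodule structure, so that the identity $\alpha\circ\mu = (1\otimes\mu)\circ\alpha_{P(\zeta_1)\otimes R}$ is stated unambiguously.
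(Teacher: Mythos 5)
Your proof is correct and is essentially the argument the paper intends: the paper states this lemma without proof, immediately after observing that $R$ is an $E(1)_*$-subcomodule algebra of $\AE{2}$, which is precisely the content you verify. Your details are all accurate --- the restricted coaction $\alpha(\otau_k) = 1\otimes\otau_k + \otau_0\otimes\zeta_k + \otau_1\otimes\zeta_{k-1}^{p}$ for $k\geq 3$ indeed lands in $E(1)_*\otimes R$, $\zeta_1$ is an $E(1)_*$-comodule primitive, and since $\zeta_1$ has even degree no Koszul signs interfere with the identification of the multiplication map as a map of tensor-product comodules.
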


This along with Theorem \ref{thm:Margolis homology} allows us to compute the Margolis homology of $R$. 

\begin{lem}
	The $Q_0$- and $Q_1$-Margolis homology of $R$ is given by 
	\begin{enumerate}
		\item $M_*(R;Q_0)\cong P(\zeta_2)$,
		\item $M_*(R;Q_1)\cong T_1(\zeta_2, \zeta_3, \ldots)$.
	\end{enumerate}
\end{lem}
%
%

Observe that the weight filtration on $\AE{2}$ yields a filtration on $R$. In particular, we obtain a direct sum decomposition 
\[
R\cong_{E(1)_*}\bigoplus_k W_2(k)
\]
as an $E(1)_*$-comodule, where 
\[
W_2(k) := \F_p\{m\in R\mid \wt(m) = p^2k\}.
\]
The Margolis homology of $W_2(k)$ is the weight $p^2k$ piece of the Margolis homology of $R$. 

\begin{prop}
	The Margolis homology groups $M_*(W_2(k);Q_i)$ for $i=0,1$ are both one dimensional $\F_p$-vector spaces. Thus, $W_2(k)$ is an invertible $E(1)_*$-comodule.
\end{prop}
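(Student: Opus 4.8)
The plan is to read $M_*(W_2(k);Q_i)$ off directly from a weight-graded refinement of the Margolis homology of $R$, and then to invoke the Adams--Priddy classification of invertible $E(1)_*$-comodules to conclude. The key preliminary observation is that $Q_0$ and $Q_1$ preserve weight: from \eqref{eqn:Q0action} one has $\wt(Q_0\otau_{2+k})=\wt(\zeta_{2+k})=p^{2+k}=\wt(\otau_{2+k})$, and from \eqref{eqn:Q1action} one has $\wt(Q_1\otau_{2+k})=\wt(\zeta_{1+k}^p)=p\cdot p^{1+k}=p^{2+k}$. Since $Q_i$ is a derivation killing the $\zeta_n$ and preserving the weight of each $\otau_n$, it preserves weight on all of $R$. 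Hence the Margolis differential respects the weight grading, and $M_*(W_2(k);Q_i)$ is exactly the weight $p^2k$ summand of $M_*(R;Q_i)$, as already recorded. It therefore suffices to count the weight $p^2k$ monomials in each of $P(\zeta_2)$ and $T_1(\zeta_2,\zeta_3,\ldots)$.

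For $i=0$, the preceding computation gives $M_*(R;Q_0)\cong P(\zeta_2)$, whose basis monomials are the $\zeta_2^j$ with $\wt(\zeta_2^j)=jp^2$. Thus the unique monomial of weight $p^2k$ is $\zeta_2^k$, so $M_*(W_2(k);Q_0)$ is one-dimensional over $\F_p$. This case is essentially immediate.

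For $i=1$, we have $M_*(R;Q_1)\cong T_1(\zeta_2,\zeta_3,\ldots)$, with $\F_p$-basis the monomials $\prod_{n\geq 2}\zeta_n^{a_n}$ subject to $0\leq a_n\leq p-1$. Such a monomial has weight $\sum_{n\geq 2}a_np^n$, and requiring this to equal $p^2k$ is, after dividing by $p^2$, the condition $\sum_{m\geq 0}a_{m+2}p^m=k$ with digits $0\leq a_{m+2}\leq p-1$. This is precisely the base-$p$ expansion of $k$, which exists and is unique; hence there is exactly one monomial of weight $p^2k$, and $M_*(W_2(k);Q_1)$ is again one-dimensional. This uniqueness-of-$p$-adic-digits observation is the only substantive point in the argument, and I expect it to be the main (and really the sole) obstacle — the rest is bookkeeping with the weight grading.

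Finally, having shown that both $M_*(W_2(k);Q_0)$ and $M_*(W_2(k);Q_1)$ are one-dimensional, I would close by appealing to the Adams--Priddy computation of the Picard group of stable $E(1)_*$-comodules (\cite{uniquenessBSO}): a comodule whose $Q_0$- and $Q_1$-Margolis homologies are each of rank one is invertible. This yields the claimed invertibility of $W_2(k)$ and completes the proof.
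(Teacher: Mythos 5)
Your proof is correct and follows essentially the same route as the paper's: identify $M_*(W_2(k);Q_i)$ with the weight-$p^2k$ summand of $M_*(R;Q_i)$, observe that $P(\zeta_2)$ has the unique monomial $\zeta_2^k$ in that weight, use uniqueness of base-$p$ expansions to find the unique weight-$p^2k$ monomial in $T_1(\zeta_2,\zeta_3,\ldots)$, and cite Adams--Priddy for invertibility. Your explicit check that $Q_0$ and $Q_1$ preserve weight is a small addition the paper leaves implicit, but it does not change the argument.
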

\begin{proof}
	As we have already mentioned, $M_*(W_2(k);Q_i)$ is the subspace of $M_*(R;Q_i)$ spanned by monomials of weight $p^2k$. When $i=0$, this implies that 
	\[
	M_*(W_2(k);Q_0) = \F_p\{\zeta_2^k\}.
	\]
	Now fix $k$, and let its base $p$-expansion be 
	\[
	k = a_0+pa_1+p^2a_2+\cdots.
	\]
	Then the monomial 
	\[
	m = \zeta_2^{a_0}\zeta_3^{a_1}\zeta_4^{a_2}\cdots 
	\]
	has weight $p^2k$ and is an element of $M_*(W_2(k);Q_1)$. Suppose that $x$ is a monomial 
	\[
	x = \zeta_2^{b_2}\zeta_3^{b_3}\zeta_4^{b_4}\cdots 
	\]
	of weight $p^2k$ in $M_*(R;Q_1)$. Then $b_j<p$ for all $j$. Moreover, we have the equality
	\[
	k = b_2+pb_3+p^2b_4+\cdots.
	\]
	The uniqueness of base $p$ expansions implies that $b_j = a_{j-2}$ for all $j\geq 2$. This shows that 
	\[
	M_*(W_2(k);Q_1) = \F_p\{\zeta_2^{a_0}\zeta_3^{a_1}\zeta_4^{a_2}\cdots\}.
	\]
\end{proof}

The decomposition of $\AE{2}$ gives a corresponding decomposition of $\overline{Q}$:
\[
\overline{Q}\cong P(\zeta_1)\otimes \overline{R}\cong P(\zeta_1)\otimes \left( \bigoplus_k \overline{W}_2(k)\right)
\]
where $\overline{R}$ is the quotient of $R$ by the $E(1)$-submodule generated by monomials of length at least 2, and $\overline{W}_2(k)$ are the corresponding quotients of $W_2(k)$. Note that the quotient map $R\to \overline{R}$ is an isomorphism on Margolis homology. Thus, the Margolis homology groups of $\overline{W}_2(k)$ are both one dimensional. So by \cite[Lemma 3.5]{uniquenessBSO}, these $E(1)_*$-comodules are invertible. We are now in a position to conclude the following. 

\begin{prop}\label{prop: Qbar even degrees}
	The $\Ext$-groups of $\overline{Q}$ are concentrated in even degree.
\end{prop}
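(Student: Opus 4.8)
The plan is to reduce the claim about $\overline{Q}$ to the invertibility of the pieces $\overline{W}_2(k)$, which has just been established. By the decomposition
\[
\overline{Q}\cong_{E(1)_*} P(\zeta_1)\otimes\left(\bigoplus_k \overline{W}_2(k)\right),
\]
and the fact that $\Ext$ sends direct sums to direct sums, it suffices to show that each summand $P(\zeta_1)\otimes\overline{W}_2(k)$ has $\Ext$-groups concentrated in even $(t-s)$-degree. Since each $\overline{W}_2(k)$ is an invertible $E(1)_*$-comodule, the Adams--Priddy classification of the Picard group of stable $E(1)$-modules (\cite{uniquenessBSO}) identifies it, up to a free summand, with a suspension of a known generator of $\mathrm{Pic}(E(1))$; the tensor factor $P(\zeta_1)$ carries trivial $E(1)_*$-coaction (as $\alpha(\zeta_1)=1\otimes\zeta_1$ by \eqref{eqn:Enzeta}) and only contributes even-degree shifts, since $|\zeta_1|=2(p-1)$ is even.

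First I would record that $\Ext_{E(1)_*}$ of any invertible comodule is a shift of $\Ext_{E(1)_*}(\F_p)$, and that the latter is concentrated in even $(t-s)$-degree: indeed $\Ext_{E(1)_*}(\F_p)\cong\F_p[v_0,v_1]$ (with $|v_0|=(1,1)$, $|v_1|=(1,2p-1)$ in $(s,t)$-bidegree), so every class sits in even $(t-s)$. Next I would pin down the degree shift attached to each $\overline{W}_2(k)$. The cleanest way is to use the one-dimensionality of the Margolis homology groups $M_*(\overline{W}_2(k);Q_i)$ together with the explicit generators computed in the previous proposition: $M_*(\overline{W}_2(k);Q_0)=\F_p\{\zeta_2^k\}$ and $M_*(\overline{W}_2(k);Q_1)=\F_p\{\zeta_2^{a_0}\zeta_3^{a_1}\cdots\}$. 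Both generating monomials are products of $\zeta_i$'s, which lie in even degree, so the invertible comodule $\overline{W}_2(k)$ is a suspension of the unit by an \emph{even} amount. Tensoring with the even-degree factor $P(\zeta_1)$ preserves this parity.

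The main obstacle I expect is making the "even suspension" argument rigorous via the Picard group rather than hand-waving: one must invoke \cite[Lemma 3.5]{uniquenessBSO} to know that an $E(1)_*$-comodule whose $Q_0$- and $Q_1$-Margolis homologies are each one-dimensional is invertible, and then use that the Adams--Priddy computation expresses every element of $\mathrm{Pic}(E(1))$ as a shift of the generators whose $\Ext$ is a degree shift of $\F_p[v_0,v_1]$. The parity of the shift is detected precisely by the internal degrees of the Margolis homology generators, which I have just observed are even. Putting these together, each $P(\zeta_1)\otimes\overline{W}_2(k)$ has $\Ext$ concentrated in even $(t-s)$, and summing over $k$ gives the result for $\overline{Q}$. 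I would close by noting this completes the verification, begun with Proposition \ref{prop: S' even degrees}, that all the $\Ext$-groups assembling $\Ext_{E(1)}(Q)$ live in even degree, which is exactly what drives the collapse of the $v_2$-Bockstein spectral sequence.
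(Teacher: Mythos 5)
Your overall strategy is the same as the paper's: the paper's proof defers to the $p=2$ argument of \cite[Proposition 2.34]{Culver2017}, which is precisely the reduction you describe --- split $\overline{Q}\cong P(\zeta_1)\otimes\bigoplus_k\overline{W}_2(k)$, invoke Adams--Priddy invertibility of each $\overline{W}_2(k)$, and deduce evenness of $\Ext$ from evenness of the Margolis homology. However, one step in your write-up is false as stated: an invertible $E(1)$-module whose Margolis homology groups sit in even internal degrees is \emph{not} in general ``a suspension of the unit by an even amount.'' The Picard group of stable $E(1)$-modules is $\Z\oplus\Z$, generated by the suspension $\Sigma\F_p$ \emph{and} the syzygy $\Omega\F_p$ (the augmentation ideal), and the pieces $\overline{W}_2(k)$ genuinely involve nonzero powers of $\Omega$. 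For instance, $\overline{W}_2(p)=W_2(p)=\F_p\{\zeta_2^p,\zeta_3,\otau_3\}$, with $Q_0\otau_3=\zeta_3$ and $Q_1\otau_3=\zeta_2^p$, is $3$-dimensional; since every free $E(1)$-module has dimension divisible by $4$, this comodule cannot be stably equivalent to any $\Sigma^i\F_p$ --- it is an \emph{odd} suspension of $\Omega^{\pm1}\F_p$. So ``the Margolis generators lie in even degree, hence the module is an even suspension of $\F_p$'' is a non sequitur, and a proof built literally on that claim would fail.

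The reason your conclusion nevertheless survives --- and this is exactly the bookkeeping the cited $p=2$ proof supplies --- is that the $\Omega$-component of the Picard group shifts the parity of $\Ext$ in lockstep with the parity of the Margolis degrees. Each factor of $\Omega^{\pm 1}$ shifts the cohomological degree of $\Ext$ in positive filtrations by one, hence shifts $t-s$ by an odd amount, and it simultaneously shifts the internal degree of the $Q_0$- and $Q_1$-Margolis homology by the odd amounts $|Q_0|=1$ and $|Q_1|=2p-1$; a suspension $\Sigma^i$ shifts all of these by $i$. Consequently, for any invertible $E(1)_*$-comodule $\Sigma^i\Omega^j\F_p$, the positive-filtration $\Ext$-groups form a copy of $\F_p[v_0,v_1]$ concentrated in $t-s\equiv i+j\pmod 2$, while its Margolis homologies sit in degrees $i+j$ and $i+j(2p-1)$, which have the same parity. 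So evenness of the Margolis degrees you computed does force evenness of $\Ext$ --- but this correlation between the Margolis degree and the $\Omega$-twisted shift is the actual content that must be proved; it cannot be shortcut by pretending the $\Omega$-direction of $\mathrm{Pic}(E(1))$ is absent. (One should also say a word about $\Ext^0$, which stable equivalence does not control; there the comodule primitives, spanned by even-degree length-zero monomials, do the work.) With that step supplied, your argument becomes the paper's argument.
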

\begin{proof}
	The proof of this statement is completely analogous to the $p=2$ statement found in \cite[Proposition 2.34]{Culver2017}. 
\end{proof}

The following theorem follows from Propositions \ref{prop: S' even degrees} and \ref{prop: Qbar even degrees}.

\begin{thm}
	The $v_2$-Bockstein spectral sequence for $Q$ collapses at $E_2$. Consequently, the module $\Ext_{E(2)}(Q)$ is $v_2$-torsion free and concentrated in even degree. 
\end{thm}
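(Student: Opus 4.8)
The plan is to deduce the collapse of the $v_2$-Bockstein spectral sequence from the input computation of the $E_1$-term, namely $E_1^{*,*,*} = \Ext_{E(1)}^{*,*}(Q)\otimes \F_p[v_2]$, together with the degree-parity information we have already assembled. Recall from the indexing established above that $d_k$ behaves like an Adams $d_1$-differential: it raises $r$ (the $v_2$-power) by $k$, lowers $s$ by $k-1$, and changes the internal degree $t$ by $-(2p^2-1)k$. Crucially, since $2p^2-1$ is \emph{odd}, each $v_2$ shifts the stem $t-s$ by an odd amount, so a differential $d_k$ changes the parity of $t-s$ whenever $k$ is odd and preserves it when $k$ is even; more usefully, I would track parity in a single variable: every differential alters the $(t-s)$-parity of a class iff it changes the total $v_2$-weight by an odd number, and all $d_k$ with $k\geq 2$ move between classes whose $\Ext_{E(1)}$-parts differ in $(t-s)$-parity because they move by $(k-1)$ homological steps and $-(2p^2-1)k$ internal steps.

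First I would record that $\Ext_{E(1)}^{*,*}(Q)$ is concentrated in even $(t-s)$-degree. This is exactly the combination of Proposition \ref{prop: S' even degrees} and Proposition \ref{prop: Qbar even degrees}: using the splitting $Q\simeq_{E(1)_*} S'\oplus\overline{Q}$ of $E(1)_*$-comodules, we get $\Ext_{E(1)}(Q)\cong \Ext_{E(1)}(S')\oplus \Ext_{E(1)}(\overline{Q})$, and both summands are concentrated in even internal degree, hence in even $(t-s)$. Next I would observe that the same is therefore true of the entire $E_1$-page away from the $v_2$-grading: tensoring with $\F_p[v_2]$ only shifts by multiples of $(2p^2-1)$, so each monomial $x\otimes v_2^r$ with $x\in\Ext_{E(1)}^{s,t}(Q)$ sits in $(t-s)$-parity equal to the parity of $r$ (since $t-s$ for $x$ is even and each $v_2$ contributes an odd shift).

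The heart of the argument is then a parity obstruction. A differential $d_k\colon E_k^{s,t,r}\to E_k^{s-k+1,\,t-(2p^2-1)k,\,r+k}$ connects a source contributing to total stem $(t-s)+(2p^2-1)r$ and a target contributing to the same total stem (the spectral sequence converges to a single bigraded group, so differentials preserve the convergent $(t-s)$-degree). But within the $E_1$-description, the source has $\Ext_{E(1)}$-class in even $(t-s)$ and $v_2$-power $r$, while the target has $\Ext_{E(1)}$-class in even $(t-s)$ and $v_2$-power $r+k$; comparing the two $(t-s)$-contributions forces $k$ to shift the parity, and since both $\Ext_{E(1)}$-pieces are forced to be even, the only way the total degrees can match on both ends is for the relevant groups to vanish. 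Concretely, I would show that because every surviving class lives in even $(t-s)$ and $v_2$ is an even-parity bookkeeping element after accounting for the odd shift, no nonzero differential can have both a nonzero source and a nonzero target; hence all $d_k$ vanish for $k\geq 2$ and the sequence collapses at $E_2$. I expect the main obstacle to be stating the parity bookkeeping cleanly enough that the cancellation is manifest rather than asserted — in particular, being careful that $E_2=E_1$ already (there is no $d_1$ by the Adams-indexing remark) and that the $v_2$-torsion-freeness and even-degree concentration of $\Ext_{E(2)}(Q)$ read off from the collapsed $E_\infty=E_2$ page, where $\Ext_{E(2)}(Q)\cong \Ext_{E(1)}(Q)\otimes\F_p[v_2]$ as an $\F_p[v_2]$-module, visibly $v_2$-torsion free and, after the degree shifts, concentrated in even total $(t-s)$-degree.
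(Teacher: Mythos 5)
Your strategy is the same as the paper's: the theorem is meant to follow from Propositions \ref{prop: S' even degrees} and \ref{prop: Qbar even degrees} via the splitting $Q\simeq_{E(1)_*}S'\oplus\overline{Q}$, by feeding the even-degree concentration of $\Ext_{E(1)}(Q)$ into a parity argument that kills all differentials. However, your parity bookkeeping contains genuine errors that, taken literally, break the argument. In the abutment, $v_2$ has bidegree $(s,t)=(1,2p^2-1)$: the group $E_\infty^{s,t,r}$ contributes to $\Ext_{E(2)_*}^{s+r,\,t+(2p^2-1)r}(Q)$, so $v_2^r$ shifts the stem by $(2p^2-2)r$, which is \emph{even}. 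Hence every class $x\otimes v_2^r$ with $x$ in even $(t-s)$-degree lies in even total stem, not (as you assert) in stem of parity $r$. Likewise, differentials do \emph{not} preserve the convergent $(t-s)$-degree: $d_k$ sends $E_k^{s,t,r}$ to $E_k^{s-k+1,\,t-(2p^2-1)k,\,r+k}$, which preserves the convergent internal degree $t+(2p^2-1)r$ but raises the convergent filtration $s+r$ by exactly one, i.e.\ it lowers the total stem by $1$ — as any Adams-indexed differential must. With your two claims as stated, collapse does not follow: a source of parity $r$ and a target of parity $r+k$ are compatible whenever $k$ is even, so your bookkeeping would still permit $d_2,d_4,\ldots$; the concluding sentence of your third paragraph is asserted rather than derived.

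The repair is short, and it is what the paper intends. The target of $d_k$ has $\Ext_{E(1)}$-part in stem degree
\[
\bigl(t-(2p^2-1)k\bigr)-\bigl(s-k+1\bigr)=(t-s)-(2p^2-2)k-1,
\]
which is odd whenever $t-s$ is even; since $\Ext_{E(1)}(Q)$ is concentrated in even $(t-s)$-degree, every target group vanishes and all $d_k$ are zero. (This includes $k=1$: the Bockstein $d_1$ exists in this indexing — your remark that "there is no $d_1$" is not right — but it too dies by parity, so the collapse in fact already happens at $E_1$.) Equivalently: the whole $E_1$-page sits in even total stem, and every differential drops the total stem by one. Your final step, reading off $v_2$-torsion-freeness and even-degree concentration from $E_\infty\cong\Ext_{E(1)}(Q)\otimes\F_p[v_2]$, is then fine, since multiplication by $v_2$ is injective on the associated graded and the filtration is exhaustive and Hausdorff in each bidegree.
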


This gives the desired splitting on the $E_2$-term of the ASS, and gives the collapsing and topological splitting we had at the 2-primary case. 

\begin{cor}
The Adams spectral sequence $E_2$-term for $\tBP{2}\wedge \tBP{2}$ decomposes
\[
\Ext_{E(2)_*}(H_*\tBP{2})\cong_{\Ext_{E(2)_*}(\F_p)}\Ext_{E(2)_*}(S)\oplus \Ext_{E(2)_*}(Q).
\]	
In particular, it decomposes into a summand which is concentrated in $s=0$ and a summand which is $v_2$-torsion free and concentrated in even degrees. 
\end{cor}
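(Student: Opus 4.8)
The plan is to derive the corollary directly from the two preceding structural facts: the $E(2)_*$-comodule splitting $\AE{2}\cong_{E(2)}S\oplus Q$ established earlier, together with the theorem that $\Ext_{E(2)}(Q)$ is $v_2$-torsion free and concentrated in even degree. Since $\Ext_{E(2)_*}(-)$ is an additive functor on the category of $E(2)_*$-comodules, applying it to a direct sum decomposition yields a direct sum decomposition of Ext groups. So the first step is simply to observe that the splitting of comodules induces
\[
\Ext_{E(2)_*}(\AE{2})\cong \Ext_{E(2)_*}(S)\oplus \Ext_{E(2)_*}(Q).
\]
I would then note that, since the splitting is a splitting of comodules (equivalently, of modules over the coalgebra), this isomorphism is one of modules over the coefficient ring $\Ext_{E(2)_*}(\F_p)$; this is what justifies the subscript in the statement and follows because the cobar complex computing Ext is functorial and the comodule maps realizing the splitting are maps of comodules, hence induce $\Ext_{E(2)_*}(\F_p)$-linear maps on Ext.

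The second step is to identify each summand's qualitative behavior. For the $S$-summand, I would invoke that $S$ is a free (equivalently injective) $E(2)$-module, as shown earlier via the vanishing of its Margolis homology and the Margolis/Adams-Wilkerson criterion that free $\iff$ injective over a finite-dimensional Hopf algebra. Injectivity of $S$ forces $\Ext^s_{E(2)_*}(S)=0$ for $s>0$, so $\Ext_{E(2)_*}(S)$ is concentrated in Adams filtration $s=0$. For the $Q$-summand, I would simply cite the theorem just proved, giving that $\Ext_{E(2)_*}(Q)$ is $v_2$-torsion free and concentrated in even $(t-s)$-degrees. Combining these two identifications with the splitting from the first step yields precisely the claimed decomposition into an $s=0$ summand and a $v_2$-torsion-free, evenly-concentrated summand.

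Honestly, this corollary is a near-immediate formal consequence of the machinery already assembled, so there is no genuine obstacle of the usual kind; the only point requiring a moment of care is the claim of $\Ext_{E(2)_*}(\F_p)$-linearity of the splitting. I would address this by emphasizing that $S$ and $Q$ are sub/quotient comodules fitting into the short exact sequence $0\to S\to \AE{2}\to Q\to 0$, which splits because $S$ is injective; the splitting map and the inclusion are both comodule maps, and functoriality of $\Ext_{E(2)_*}(-)$ together with the pairing with $\Ext_{E(2)_*}(\F_p)$ (arising from the comodule-algebra structure, or simply from the general module structure of Ext over the Ext of the ground field) makes the induced maps linear over $\Ext_{E(2)_*}(\F_p)$. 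Thus the entire proof reduces to applying an additive, $\Ext_{E(2)_*}(\F_p)$-linear functor to a split short exact sequence and reading off the two summands' properties from results already in hand.
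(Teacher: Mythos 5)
Your proposal is correct and follows essentially the same route as the paper: the paper treats this corollary as an immediate consequence of the comodule splitting $\AE{2}\cong_{E(2)}S\oplus Q$, the freeness (hence injectivity) of $S$ forcing its Ext to sit in $s=0$, and the preceding theorem that $\Ext_{E(2)_*}(Q)$ is $v_2$-torsion free and concentrated in even degrees. Your extra care about $\Ext_{E(2)_*}(\F_p)$-linearity of the induced splitting is a reasonable elaboration of a point the paper leaves implicit.
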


Since $\tBP{2}\wedge \tBP{2}$ is a module spectrum over $\tBP{2}$, the differentials in its ASS are linear over the ASS for $\tBP{2}$. Hence we can conclude

\begin{cor}
The ASS for $\tBP{2}\wedge \tBP{2}$ collapses at $E_2$. 	
\end{cor}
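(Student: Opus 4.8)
The plan is to leverage the module structure of the spectral sequence over the (collapsed) ASS for $\tBP{2}$ together with the two structural features of the $E_2$-decomposition established above: that $\Ext_{E(2)_*}(S)$ sits entirely in filtration $s=0$, and that $\Ext_{E(2)_*}(Q)$ is $v_2$-torsion free and concentrated in even $(t-s)$-degree. First I would record that the ASS for $\tBP{2}$ has $E_2$-term $\Ext_{E(2)_*}(\F_p)=\F_p[v_0,v_1,v_2]$, with each $v_i$ in even internal degree $t-s$; since every class lies in even total degree and a $d_r$ lowers $t-s$ by one, there is no room for differentials and this spectral sequence collapses. In particular $v_2\in\Ext^{1,2p^2-1}_{E(2)_*}(\F_p)$ is a permanent cycle, and because $\tBP{2}\wedge\tBP{2}$ is a $\tBP{2}$-module spectrum, every differential $d_r$ in its ASS is linear over multiplication by $v_2$.

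Next I would dispose of all differentials internal to, or emanating from, the $\Ext_{E(2)_*}(Q)$ summand by a parity count. A differential $d_r$ sends bidegree $(s,t)$ to $(s+r,t+r-1)$, hence decreases $t-s$ by one and raises filtration by $r\geq 2$. A class in $\Ext_{E(2)_*}(Q)$ lives in even $t-s$, so its image would lie in odd $t-s$ and in filtration $\geq 2$; it can therefore land neither in $\Ext_{E(2)_*}(Q)$ (all even) nor in $\Ext_{E(2)_*}(S)$ (all in $s=0$), and so must vanish. The same filtration bookkeeping shows $\Ext_{E(2)_*}(S)$ cannot receive a differential (its filtration $0$ is minimal) and cannot map into itself (differentials raise $s$). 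Thus the only conceivable nonzero differentials run from the filtration-$0$ summand $\Ext_{E(2)_*}(S)$ into $\Ext_{E(2)_*}(Q)$.

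The heart of the argument, and the step I expect to be the main obstacle, is excluding these remaining $\Ext_{E(2)_*}(S)\to\Ext_{E(2)_*}(Q)$ differentials, which I would handle by induction on the page $r$ using $v_2$-linearity. Since the splitting is one of $\Ext_{E(2)_*}(\F_p)$-modules and $S$ is free (so $\Ext^{>0}_{E(2)_*}(S)=0$), multiplication by $v_2$ raises the filtration of any class in $\Ext_{E(2)_*}(S)$ out of $s=0$ and hence annihilates it. Assuming no differentials occur on pages $2,\dots,r-1$, the $Q$-summand of $E_r$ is still $\Ext_{E(2)_*}(Q)$ and remains $v_2$-torsion free. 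Given $x\in\Ext_{E(2)_*}(S)$ with $d_r(x)=y\in\Ext_{E(2)_*}(Q)$, $v_2$-linearity yields $0=d_r(v_2x)=\pm v_2\,d_r(x)=\pm v_2 y$, and $v_2$-torsion freeness then forces $y=0$. Hence $d_r$ vanishes and $E_{r+1}=E_r$; by induction the spectral sequence collapses at $E_2$. I would take care to confirm the $v_2$-linearity of $d_r$ (via the $\tBP{2}$-module structure and the fact that $v_2$ is a permanent cycle) and that $v_2$ genuinely annihilates $\Ext_{E(2)_*}(S)$, as these are the two facts on which the induction rests.
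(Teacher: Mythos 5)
Your proposal is correct and follows essentially the same route as the paper: the paper's (very terse) proof likewise combines the decomposition of the $E_2$-term into a filtration-zero summand $\Ext_{E(2)_*}(S)$ and an even-degree, $v_2$-torsion-free summand $\Ext_{E(2)_*}(Q)$ with the linearity of the differentials over the ASS for $\tBP{2}$ coming from the module structure. Your write-up simply makes explicit the parity count, the vanishing of $v_2$ on the filtration-zero summand, and the page-by-page induction that the paper leaves to the reader (and to its $p=2$ reference \cite{Culver2017}).
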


In \cite[\textsection 2.6]{Culver2017}, the splitting of the Adams $E_2$-term (and hence of the homotopy groups) was seen to have a topological origin. The same proof gives an odd analogue. 

\begin{cor}
There is an equivalence of spectra
\[
\tBP{2}\wedge \tBP{2}\simeq C\vee HV
\]	
where $HV$ is a generalized Eilenberg-MacLane spectrum with 
\[
\pi_*(HV)\cong \Ext_{E(2)_*}(S)
\]
and
\[
\pi_*C\cong \Ext_{E(2)_*}(Q).
\]
\end{cor}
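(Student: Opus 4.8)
The plan is to realize the algebraic splitting of the Adams $E_2$-term topologically: I will exhibit $HV$ as a cofree (generalized Eilenberg--MacLane) wedge summand of $\tBP{2}\wedge\tBP{2}$ and take $C$ to be its complement, following the template of \cite[\textsection 2.6]{Culver2017}. First I would record the relevant $A_*$-comodule splitting of $H_*(\tBP{2}\wedge\tBP{2})$. Writing $\AE{2}\cong A_*\,\square_{E(2)_*}\,\F_p$, the usual shearing isomorphism identifies
\[
H_*(\tBP{2}\wedge\tBP{2})\cong\AE{2}\otimes\AE{2}\cong A_*\,\square_{E(2)_*}\,\AE{2}
\]
as $A_*$-comodules (this is exactly what produces the change-of-rings used earlier). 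Applying $A_*\,\square_{E(2)_*}(-)$ to the $E(2)_*$-comodule splitting $\AE{2}\cong S\oplus Q$ yields an $A_*$-comodule splitting
\[
H_*(\tBP{2}\wedge\tBP{2})\cong\left(A_*\,\square_{E(2)_*}\,S\right)\oplus\left(A_*\,\square_{E(2)_*}\,Q\right).
\]
Because $S$ is a free $E(2)$-module it is cofree as an $E(2)_*$-comodule, $S\cong E(2)_*\otimes V$ with $V:=\F_p\,\square_{E(2)_*}\,S=\Ext^0_{E(2)_*}(S)$, and hence $A_*\,\square_{E(2)_*}\,S\cong A_*\otimes V$ is a \emph{cofree} $A_*$-comodule. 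Choosing a homogeneous basis $\{v_\alpha\}$ of $V$ with $\deg v_\alpha=n_\alpha$, this summand is precisely $H_*(HV)$ for $HV:=\bigvee_\alpha\Sigma^{n_\alpha}H$.

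Next I would build the projection $\Phi:\tBP{2}\wedge\tBP{2}\to HV$. Since $HV$ is a wedge of suspensions of $H$ and $\tBP{2}\wedge\tBP{2}$ is connective of finite type, the Adams spectral sequence computing $[\tBP{2}\wedge\tBP{2},HV]$ has $E_2=\Ext_{A_*}(H_*(\tBP{2}\wedge\tBP{2}),H_*HV)$; as $H_*HV$ is injective this is concentrated in filtration $0$, yielding a natural isomorphism $[\tBP{2}\wedge\tBP{2},HV]\cong\mathrm{Hom}_{A_*}(H_*(\tBP{2}\wedge\tBP{2}),H_*HV)$. Let $\Phi$ be the map corresponding to the projection onto the cofree summand $A_*\otimes V$. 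By construction $H_*\Phi$ is a split surjection of $A_*$-comodules, so on the (collapsed) Adams $E_2$-pages $\Phi$ realizes the projection $\Ext_{E(2)_*}(\AE{2})\to\Ext_{E(2)_*}(S)$.

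The crux is to split $\Phi$. Set $C:=\mathrm{fib}(\Phi)$, sitting in a cofiber sequence $C\to\tBP{2}\wedge\tBP{2}\xrightarrow{\Phi}HV\xrightarrow{\delta}\Sigma C$. Since $H_*\Phi$ is surjective the homology long exact sequence degenerates to a short exact sequence, giving $H_*C\cong A_*\,\square_{E(2)_*}\,Q$ and forcing $H_*\delta=0$, so $\delta$ has positive Adams filtration. I expect the \textbf{main obstacle} to be showing that $\delta$ is in fact null, equivalently that the inclusion $A_*\otimes V\hookrightarrow H_*(\tBP{2}\wedge\tBP{2})$ is realized by a map $\psi:HV\to\tBP{2}\wedge\tBP{2}$. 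This is genuinely a matter of obstruction theory: over the infinite-dimensional $A_*$ the cofree comodule $H_*HV$ is injective but \emph{not} projective, so no section is formal, and the obstructions live in $\Ext^{\geq 1}_{A_*}(H_*HV,H_*\Sigma C)$. Following \cite[\textsection 2.6]{Culver2017}, I would dispatch these using the collapse of the Adams spectral sequence for $\tBP{2}\wedge\tBP{2}$ established above together with the structural results of this section---namely that $\Ext_{E(2)_*}(S)$ is concentrated in filtration $0$ while $\Ext_{E(2)_*}(Q)$ is $v_2$-torsion free and concentrated in even degrees, which sharply constrains the possible values of $\delta$. Once $\psi$ is constructed, the composite $\Phi\psi$ agrees with $\mathrm{id}_{HV}$ on mod $p$ homology, and since $HV$ is a generalized Eilenberg--MacLane spectrum the identification $[HV,HV]\cong\mathrm{Hom}_{A_*}(H_*HV,H_*HV)$ forces $\Phi\psi\simeq\mathrm{id}_{HV}$. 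Hence $\Phi$ splits and $\tBP{2}\wedge\tBP{2}\simeq C\vee HV$.

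Finally I would identify the homotopy of the two summands. The spectrum $HV$ is by construction a generalized Eilenberg--MacLane spectrum with $\pi_*HV=V=\Ext_{E(2)_*}(S)$, the latter being concentrated in $s=0$. For $C$, the collapse of the Adams spectral sequence together with $H_*C\cong A_*\,\square_{E(2)_*}\,Q$ and the change-of-rings isomorphism $\Ext_{A_*}(\F_p,A_*\,\square_{E(2)_*}\,Q)\cong\Ext_{E(2)_*}(Q)$ gives $\pi_*C\cong\Ext_{E(2)_*}(Q)$, as claimed.
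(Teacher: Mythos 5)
Your construction of $\Phi$, the identification of its fiber $C$, and the reduction of the corollary to showing $\delta\simeq 0$ (equivalently, producing a section $\psi:HV\to\tBP{2}\wedge\tBP{2}$) all match the shape of the $p=2$ argument that this corollary imports, and that part of your write-up is fine. The genuine gap is at the step you yourself flag as the crux: you never actually show $\delta\simeq 0$. The justification offered --- that the collapse of the Adams spectral sequence for $\tBP{2}\wedge\tBP{2}$ together with the structure of $\Ext_{E(2)_*}(Q)$ ``sharply constrains'' $\delta$ --- is not an argument. Those facts only control $\pi_*\delta$ and $H_*\delta$, both of which you already know vanish; the whole difficulty is that a map \emph{out of} a generalized Eilenberg--MacLane spectrum is not determined by its effect on homotopy and homology by any formal filtration reasoning. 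The obstruction groups you name, $\Ext^{\geq 1}_{A_*}(H_*HV,H_*\Sigma C)$, are nonzero in general, and the Adams spectral sequence for maps out of the infinite spectrum $HV$ has convergence problems, so there is no spectral-sequence shortcut. The statement that would close the gap --- for $Y$ bounded below of finite type, a map from a generalized Eilenberg--MacLane spectrum to $Y$ which is zero on mod $p$ (co)homology is null --- is a genuine theorem (Margolis' theory of Eilenberg--MacLane wedge summands), not a consequence of anything established in this paper, and if you want to go this route you must invoke it.

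The proof the paper actually points to sidesteps this obstruction theory by using the $\tBP{2}$-module structure of $\tBP{2}\wedge\tBP{2}$, which your proposal never exploits at this step. Since $\Ext_{E(2)_*}(S)$ is concentrated in filtration $0$ and the splitting of the $E_2$-term is as $\Ext_{E(2)_*}(\F_p)$-modules, each basis element $v_\alpha\in V$ can be represented by a homotopy class $x_\alpha\in\pi_{n_\alpha}(\tBP{2}\wedge\tBP{2})$ annihilated by $p$, $v_1$ and $v_2$; the adjoint $\tBP{2}$-module map $\Sigma^{n_\alpha}\tBP{2}\to\tBP{2}\wedge\tBP{2}$ then factors through the quotient module $\Sigma^{n_\alpha}\tBP{2}/(p,v_1,v_2)\simeq\Sigma^{n_\alpha}H$, and wedging these factorizations over $\alpha$ produces $\psi:HV\to\tBP{2}\wedge\tBP{2}$ such that $\Phi\psi$ is a self-map of a generalized Eilenberg--MacLane spectrum inducing an isomorphism on homotopy, hence an equivalence; this splits $\Phi$. (Checking that the $x_\alpha$ can be so chosen --- i.e., ruling out hidden $v_i$-extensions from the $S$-summand into $\Ext_{E(2)_*}(Q)$ --- is where the evenness and $v_2$-torsion-freeness of $\Ext_{E(2)_*}(Q)$ do real work.) So your outline has the right skeleton, but without either Margolis' theorem or this module-theoretic construction of $\psi$, the key step remains unproven.
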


\section{Weight \& Brown-Gitler subcomodules}\label{sec:weight}

In \cite{Culver2017}, we defined a notion of weight for the comodules $\AE{n}$ and we defined the $i$th Brown-Gitler subcomodule of $\AE{n}$ to be the subspace spanned by monomials of weight $2i$. This was then used to produce exact sequences involving the Brown-Gitler subcomodules of $\AE{2}$ (cf. \cite[Lemmas 3.15 and 3.17]{Culver2017}). This was the entire basis of an inductive method for explicitly determining the Ext groups in the Adams $E_2$-term of $\tBP{2}\wedge \tBP{2}$. The purpose of this section is to modify these methods to the present situation. The reader who is familiar with \cite{Culver2017} will notice that several of the arguments are in fact simpler in the odd context. 

\subsection{Weight and Brown-Gitler subcomodules at odd primes}

Central to the computational methods of \cite{Culver2017} was the concept of weight. We begin with an odd prime analogue. 

\begin{defn}\label{defn:weight}
	Let 
	\[
	\wt(\otau_k):=p^k
	\]
	and 
	\[
	\wt(\zeta_k):= p^k
	\]
	and extend multiplicatively, 
	\[
	\wt(xy) = \wt(x)+\wt(y).
	\]
\end{defn}

\begin{rmk}
	As with length, if one makes the usual translation between the odd primary and mod 2 dual Steenrod algebra, this definition lines up with the notion of weight we used in \cite{Culver2017}.
\end{rmk}

\begin{defn}\label{defn:BrownGitlercomodule}
	The \emph{jth Brown-Gitler comodule}, denoted $N_i(j)$, is the subspace of $\AE{i}$ spanned by monomials of weight less than or equal to $pj$.
\end{defn}

\begin{rmk}
	For consistency with the notation of Brown-Gitler spectra, we will usually denote $N_i(j)$ with $\tBPu{i}_j$. In the particular case when $i=1$, then $\tBP{1}$ is equivalent to the Adams summand $\ell$, so we will write $\ellu_j$ for $N_1(j)$.
\end{rmk}

From the coproduct formulas \eqref{eqn2:coprod-zetas} and \eqref{eqn2:coprod-otaus}, we can observe that $\tBPu{i}_j$ is a $A_*$-subcomodule of $\AE{i}$.

\begin{defn}\label{defn:weightequalcomodule}
	Let $M_i(j)$ denote the subspace of $\AE{i}$ spanned by the monomials of weight exactly $pj$.
\end{defn}

In this section, we are most concerned with restricted coaction of $\AE{i}$ to $E(i)_*$. Recall that the $E(i)_*$-coaction on $\AE{i}$ is the following composite;
\[
\begin{tikzcd}
	\alpha_{E(i)}: \AE{i}\arrow[r, "\alpha"] & A_*\otimes \AE{i}\arrow[r,"\pi\otimes 1"] & E(i)_*\otimes \AE{i}
\end{tikzcd}
\]
We recorded the explicit formula for $\alpha_{E(i)}$ in Proposition \ref{prop:E(n)coaction}. This leads us to 

\begin{prop}
	Suppose that $x\in \AE{i}$ is a monomial and that 
	\[
	\alpha_{E(i)_*}(x) = 1\otimes x+ \sum_{j=1}^i \otau_j\otimes x_j
	\]
	for $x_j\in \AE{i}$. Then $\wt(x') = \wt(x)$. 
\end{prop}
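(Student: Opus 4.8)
The plan is to realize $\alpha_{E(i)_*}$ as a morphism of weight-graded algebras, after which the statement reduces to a one-line verification on the algebra generators. First I would note that $\AE{i}$ carries a grading by weight: since $\wt$ is defined on the generators $\zeta_k,\otau_k$ and extended multiplicatively, the subspaces spanned by monomials of a fixed weight $w$ assemble into an algebra grading, with a weight-$a$ monomial times a weight-$b$ monomial landing in weight $a+b$.

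Next I would grade the target $E(i)_*\otimes\AE{i}$ by the weight of the \emph{right-hand} tensor factor alone, assigning $\eta\otimes m$ the degree $\wt(m)$ and giving the whole factor $E(i)_*$ degree zero. Because multiplication in the tensor product has the form $(\eta_1\otimes m_1)(\eta_2\otimes m_2)=\pm(\eta_1\eta_2)\otimes(m_1m_2)$, the right-factor weight is additive, so this is again an algebra grading. Since $\AE{i}$ is a comodule algebra, $\alpha_{E(i)_*}$ is an algebra homomorphism into this graded algebra; consequently it is enough to check that each algebra generator is sent into the graded summand of the matching weight, for then multiplicativity forces the same for every monomial.

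This generator check is immediate from Proposition~\ref{prop:E(n)coaction}. For $\zeta_k$, formula~\eqref{eqn:Enzeta} gives $\alpha_{E(i)_*}(\zeta_k)=1\otimes\zeta_k$, of right-factor weight $\wt(\zeta_k)=p^k$. For the exterior generators $\otau_{i+k}$ of $\AE{i}$, formula~\eqref{eqn:Enotau} produces terms $\otau_j\otimes\zeta_{i+k-j}^{p^j}$, and each such right factor has weight $p^j\cdot p^{\,i+k-j}=p^{\,i+k}=\wt(\otau_{i+k})$. In both cases the image lies in the graded piece of the same weight as the generator. Hence $\alpha_{E(i)_*}$ preserves the grading, so if $x$ is a monomial of weight $w$ then every right-hand factor $x_j$ occurring in $\alpha_{E(i)_*}(x)$ is a monomial of weight $w=\wt(x)$, which is the assertion.

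I do not anticipate a substantive obstacle: the only point requiring care is the choice of grading on $E(i)_*\otimes\AE{i}$, namely that one must remember \emph{only} the weight of the right tensor factor---so that $\alpha_{E(i)_*}$ becomes genuinely degree-preserving rather than merely weight-additive across both factors---together with the standard fact that a ring homomorphism is determined on all of $\AE{i}$ by its values on generators. Everything else follows formally from the additivity of $\wt$ and the coaction formulas already recorded.
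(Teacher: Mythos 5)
Your proposal is correct and follows essentially the same route as the paper: the paper's proof also invokes the comodule algebra structure to reduce to the generators $\zeta_n$ and $\otau_n$ and then reads off the claim from the coaction formulas of Proposition \ref{prop:E(n)coaction}. Your explicit bookkeeping with the weight grading on $E(i)_*\otimes\AE{i}$ (weighting only the right tensor factor) just makes precise the reduction that the paper leaves implicit.
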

\begin{proof}
	Since $\AE{i}$ is a comodule algebra, its enough to do this in the case that $x=\zeta_n$ or $x= \otau_n$. These cases follow from Proposition \ref{prop:E(n)coaction}.
\end{proof}

It follows from this proposition that $M_i(j)$ is a subcomodule of $\AE{i}$. Thus we have a direct sum decomposition
\[
\AE{i}\cong_{E(i)_*} \bigoplus_{k\geq 0}M_i(k)
\]
of $E(i)_*$-comodules.

When $i\geq 1$, consider the algebra map defined by 
\[
\varphi_i: \AE{i}\to \AE{i-1};\, \begin{cases}
	\zeta_k^{p^\ell}\mapsto \zeta_{k-1}^{p^\ell} & \forall k\\
	\otau_j\mapsto \otau_{j-1} & \forall j\geq i+1
\end{cases}
\]

\begin{prop}
	The map $\varphi_i$ is a map of ungraded $E(i)_*$-comodules. 
\end{prop}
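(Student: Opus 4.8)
The plan is to use that both $\AE{i}$ and $\AE{i-1}$ receive their $E(i)_*$-comodule structures by restriction of the full $A_*$-coaction along the Hopf-algebra projection $\pi\colon A_*\to E(i)_*$; that is, the $E(i)_*$-coaction is the composite $\alpha_{E(i)}=(\pi\otimes\mathrm{id})\circ\psi$, exactly as in Proposition \ref{prop:E(n)coaction}. (This is the relevant structure on the target $\AE{i-1}$, not its intrinsic $E(i-1)_*$-coaction.) Since $\AE{i}$ and $\AE{i-1}$ are comodule algebras and $\varphi_i$ is an algebra map, it suffices to verify the comodule-map identity $(\mathrm{id}\otimes\varphi_i)\circ\alpha_{E(i)}=\alpha_{E(i)}\circ\varphi_i$ on the algebra generators $\zeta_k$ ($k\ge1$) and $\otau_m$ ($m\ge i+1$) of $\AE{i}$. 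Throughout I rename the summation indices in \eqref{eqn2:coprod-zetas} and \eqref{eqn2:coprod-otaus} to $a,b$ to avoid clashing with the subscript $i$.

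First I would record the following \emph{anomaly identity} in $A_*\otimes\AE{i-1}$, valid for every algebra generator $x$ of $\AE{i}$:
\[
(\mathrm{id}\otimes\varphi_i)\,\psi(x)=\psi(\varphi_i(x))+x\otimes1.
\]
For $x=\zeta_k$ this is read off \eqref{eqn2:coprod-zetas}: applying $\mathrm{id}\otimes\varphi_i$ to $\psi(\zeta_k)=\sum_{a+b=k}\zeta_b\otimes\zeta_a^{p^b}$, the summands with $a\ge1$ reindex (via $a\mapsto a-1$) precisely to $\psi(\zeta_{k-1})=\psi(\varphi_i\zeta_k)$, while the single $a=0$ summand contributes $\zeta_k\otimes\zeta_0^{\,p^{k}}=\zeta_k\otimes1$. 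For $x=\otau_m$ the identical bookkeeping on \eqref{eqn2:coprod-otaus} reindexes the $a\ge1$ part of the sum to $\psi(\otau_{m-1})$ and peels off the $a=0$ term $\otau_m\otimes1$; the primitive summand $1\otimes\otau_m$ maps to $1\otimes\otau_{m-1}$, which is the corresponding summand of $\psi(\otau_{m-1})$.

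With the anomaly identity in hand I would apply $\pi\otimes\mathrm{id}$. The kernel of $\pi$ is the ideal $A_*\cdot\overline{\AE{i}}$, which is generated by exactly the elements $\zeta_k$ ($k\ge1$) and $\otau_m$ ($m\ge i+1$); hence $\pi(x)=0$ for every algebra generator $x$, and the anomaly term $x\otimes1$ is annihilated. Using $(\pi\otimes\mathrm{id})(\mathrm{id}\otimes\varphi_i)=(\pi\otimes\varphi_i)=(\mathrm{id}\otimes\varphi_i)(\pi\otimes\mathrm{id})$, applying $\pi\otimes\mathrm{id}$ to the anomaly identity gives $\alpha_{E(i)}(\varphi_i x)=(\mathrm{id}\otimes\varphi_i)\alpha_{E(i)}(x)$ on generators, and therefore on all of $\AE{i}$ by multiplicativity.

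The main obstacle is precisely the boundary term $x\otimes1$: it shows that $\varphi_i$ is \emph{not} a map of $A_*$-comodules, since this anomaly survives over $A_*$. What makes the proposition true is that every generator of $\AE{i}$ lies in $\ker\pi$, so the anomaly becomes invisible after projecting to $E(i)_*$. The one computational point requiring care is the $a=0$ (equivalently $\zeta_0=1$) term in the coproduct formulas, which is exactly the source of the anomaly and is easy to discard by misreading $\varphi_i(\zeta_0^{p^b})$ as $\zeta_{-1}$ rather than as $1$. Finally, the adjective \emph{ungraded} is essential: $\varphi_i$ lowers internal degree, so it is a morphism of comodules only after forgetting the topological grading; this has no bearing on the coaction computation above.
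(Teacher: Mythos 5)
Your proof is correct and takes essentially the same route as the paper's: both reduce to the algebra generators $\zeta_k$ ($k\geq 1$) and $\otau_m$ ($m\geq i+1$) via multiplicativity and then verify the coaction identity from the coproduct formulas, with the target $\AE{i-1}$ carrying the $E(i)_*$-coaction $(\pi\otimes\mathrm{id})\circ\psi$ rather than its intrinsic $E(i-1)_*$-structure. Your ``anomaly identity'' is just an expanded form of the paper's appeal to \eqref{eqn:Enzeta} and \eqref{eqn:Enotau}, and it correctly pinpoints why $\varphi_i$ fails to be an $A_*$-comodule map yet becomes one after projecting to $E(i)_*$.
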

\begin{proof}
	As in the $p=2$ case, observe that $\AE{i}$ is a $E(i)_*$-comodule algebra generated by the elements $\{\zeta_k\mid k\geq 1\}$ and $\{\otau_n\mid n\geq i+1\}$. The coproduct formulas \eqref{eqn:Enzeta} and \eqref{eqn:Enotau} imply the proposition. Keep in mind that we are regarding $\AE{i-1}$ as an $E(i)_*$-comodule.
\end{proof}

\begin{lem}
	The map $\varphi_i$ carries $M_i(j)$ isomorphically onto $N_{i-1}(\floor{j/p})$.
\end{lem}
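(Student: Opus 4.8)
The plan is to prove the statement by producing a bijection between the monomial bases of $M_i(j)$ and $N_{i-1}(\floor{j/p})$ that is realized by $\varphi_i$. Since $\varphi_i$ is a map of $E(i)_*$-comodules by the preceding proposition, and $M_i(j)$ is an $E(i)_*$-subcomodule of $\AE{i}$ (it is cut out by the weight-preservation property), the restriction $\varphi_i\colon M_i(j)\to \AE{i-1}$ lands in a subcomodule, and any $\F_p$-linear bijection onto $N_{i-1}(\floor{j/p})$ will automatically be an isomorphism of $E(i)_*$-comodules. So it suffices to understand $\varphi_i$ on monomials and verify bijectivity together with the correct weight bookkeeping.

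The first step I would take is to isolate the role of $\zeta_1$. Every monomial $m\in\AE{i}$ factors uniquely as $m=\zeta_1^{a}m'$, where $m'$ is a monomial in the subalgebra $R:=\F_p[\zeta_2,\zeta_3,\ldots]\otimes E(\otau_{i+1},\otau_{i+2},\ldots)$ involving no $\zeta_1$. Because $\varphi_i$ sends $\zeta_1\mapsto 1$ and shifts every other generator index down by one, its restriction $\psi:=\varphi_i|_R$ is an algebra isomorphism $R\xrightarrow{\cong}\AE{i-1}$ (it bijects $\{\zeta_k\}_{k\geq 2}$ onto $\{\zeta_k\}_{k\geq 1}$ and $\{\otau_n\}_{n\geq i+1}$ onto $\{\otau_n\}_{n\geq i}$), and since the index shift divides each generator's weight by $p$, one has $\wt(\psi(x))=\wt(x)/p$ on monomials. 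In particular $\varphi_i(m)=\psi(m')$ depends only on $m'$. Next I would translate the weight constraints: a monomial $m=\zeta_1^{a}m'$ lies in $M_i(j)$ exactly when $\wt(m)=ap+\wt(m')=pj$, that is, when $a=j-\wt(m')/p$. Thus $m'$ extends to a monomial of $M_i(j)$ precisely when $j-\wt(m')/p\geq 0$, and that extension is then unique. Under $\psi$ this condition reads $\wt(\psi(m'))=\wt(m')/p\leq j$; since every monomial of $\AE{i-1}$ has weight divisible by $p$, this is equivalent to $\wt(\psi(m'))\leq p\floor{j/p}$, i.e. to $\psi(m')\in N_{i-1}(\floor{j/p})$. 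Hence $m\mapsto\varphi_i(m)=\psi(m')$ is a well-defined bijection from the monomial basis of $M_i(j)$ onto that of $N_{i-1}(\floor{j/p})$, and the lemma follows.

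The step I expect to require the most care — and the conceptual heart of the statement — is the appearance of the floor. It arises from two integrality facts acting together: the $\zeta_1$-exponent $a=j-\wt(m')/p$ must be a non-negative integer, and $\wt(m')/p=\wt(\psi(m'))$ is forced to be a multiple of $p$ because $\AE{i-1}$ is generated in weights divisible by $p$. These are exactly what pin the target down as the weight-$\leq p\floor{j/p}$ subspace rather than a naive weight-$\leq j$ condition, and they simultaneously guarantee both the containment $\varphi_i(M_i(j))\subseteq N_{i-1}(\floor{j/p})$ and the surjectivity onto every monomial of $N_{i-1}(\floor{j/p})$. Injectivity on $M_i(j)$ is then automatic, since once $m'$ is fixed the exponent $a$ is uniquely determined by the weight constraint, so distinct monomials of $M_i(j)$ cannot share an image.
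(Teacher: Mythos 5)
Your proof is correct and takes essentially the same route as the paper's: both are monomial-by-monomial bijection arguments that isolate the $\zeta_1$-exponent (which $\varphi_i$ kills), note that $\varphi_i$ divides the weight of the complementary factor by $p$, and use the divisibility of all weights in $\AE{i-1}$ by $p$ to produce the floor $\floor{j/p}$. The paper organizes this by fixing the $\zeta_1$-exponent $k_1$ and identifying each slice's image as $M_{i-1}\bigl((j-k_1)/p\bigr)$, while you parametrize by the complementary monomial $m'$, but this is the same bijection viewed from the other side.
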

\begin{proof}
	The proof is essentially the same as in the $2$-primary case. Let $x\in M_i(j)$ be a monomial, let's say it is 
	\[
	x = \zeta_1^{k_1}\zeta_2^{k_2}\cdots \zeta_{i+1}^{k_{i+1}}\otau_{i+1}^{\epsilon_{i+1}}\zeta_{i+2}^{k_{i+2}}\otau_{i+2}^{\epsilon_{i+2}}\cdots 
	\]
	Then 
	\[
	\wt(x) = \sum_{r\geq 1} k_rp^r + \sum_{s\geq i+1}\epsilon_s p^s
 	\]
 	and
 	\[
 	\varphi_i(x) = \zeta_1^{k_2}\cdots \zeta_i^{k_{i+1}}\otau_i^{\epsilon_{i+1}}\zeta_{i+1}^{k_{i+2}}\cdots.
 	\]
 	Thus
 	\[
 	\wt(\varphi_i(x)) = \sum_{r\geq 1}k_{r+1}p^r + \sum_{s\geq i}\epsilon_{s+1}p^s.
 	\]
 	In other words, we have 
 	\[
 	\wt(\varphi_i(x)) = \frac{pj-pk_1}{p} = j-k_1.
 	\]
 	Observe that $\wt(\varphi_i(x))$ is divisible by $p$. This shows that $\varphi_i$ maps the subspace of $M_i(j)$ spanned by monomials whose power of $\zeta_1$ is $k_1$ isomorphically onto $M_{i-1}(\frac{j-k_1}{p})$. Letting $k_1$ vary (over integers $k_1$ such that $j-k_1$ is congruent to 0 mod $p$) shows that $\varphi_i$ maps $M_i(j)$ isomorphically onto $N_{i-1}\left(\floor{j/p}\right)$.
\end{proof}

\begin{rmk}\label{rmk:inverse-map}
The inverse of $\varphi_i$ is given on monomials by 
\[
\varphi_i^{-1}: \zeta_1^{k_1}\zeta_2^{k_2}\cdots \zeta_{i+1}^{k_{i+1}}\otau_{i+1}^{\epsilon_{i+1}}\zeta_{i+2}^{k_{i+2}}\otau_{i+2}^{\epsilon_{i+2}}\cdots 
	\mapsto 
	\zeta_1^a\zeta_2^{k_1}\zeta_3^{k_2}\cdots \zeta_{i+2}^{k_{i+1}}\otau_{i+2}^{\epsilon_{i+1}}\zeta_{i+3}^{k_{i+2}}\otau_{i+3}^{\epsilon_{i+2}}\cdots 
\]	
where 
\[
a = j-p^{-1}\wt(\zeta_2^{k_1}\zeta_3^{k_2}\cdots \zeta_{i+2}^{k_{i+1}}\otau_{i+2}^{\epsilon_{i+1}}\zeta_{i+3}^{k_{i+2}}\otau_{i+3}^{\epsilon_{i+2}}\cdots)
\]
\end{rmk}

\begin{cor}\label{cor5:comod-iso}
There is a graded isomorphism of $E(i)_*$-comodules 
\[
M_i(j)\cong \Sigma^{qj}N_{i-1}(\floor{j/p})
\]	
\end{cor}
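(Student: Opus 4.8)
The plan is to promote the ungraded comodule isomorphism $\varphi_i \colon M_i(j) \to N_{i-1}(\floor{j/p})$ furnished by the preceding lemma to a graded statement, by computing the internal degree shift induced by $\varphi_i$ and checking that it equals the constant $qj$, independent of which monomial we feed in. Since passing to a suspension does not alter the comodule coaction, once we know $\varphi_i$ lowers internal degree by the uniform amount $qj$ it will follow that, after regrading, $\varphi_i$ is a degree-preserving isomorphism $M_i(j) \to \Sigma^{qj} N_{i-1}(\floor{j/p})$ of $E(i)_*$-comodules, which is the assertion.

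First I would record the relation between internal degree and weight. Since $|\zeta_r| = 2(p^r - 1) = 2\,\wt(\zeta_r) - 2$ and $|\otau_s| = 2p^s - 1 = 2\,\wt(\otau_s) - 1$, multiplicativity gives, for any monomial $m \in \AE{i}$,
\[
|m| = 2\,\wt(m) - 2a(m) - b(m),
\]
where $a(m)$ denotes the total exponent of the $\zeta$'s occurring in $m$ and $b(m)$ the total exponent of the $\otau$'s. The three quantities $\wt$, $a$, $b$ are precisely the data that $\varphi_i$ transforms in a controlled way.

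Next, for a monomial $x = \zeta_1^{k_1}\zeta_2^{k_2}\cdots \in M_i(j)$ I would track each of these invariants under $\varphi_i$. The preceding lemma already gives $\wt(\varphi_i(x)) = j - k_1$, while $\wt(x) = pj$. Because $\varphi_i$ sends $\otau_s \mapsto \otau_{s-1}$ with no exterior generator being annihilated, it preserves the exterior count: $b(\varphi_i(x)) = b(x)$. Because $\varphi_i$ sends $\zeta_r \mapsto \zeta_{r-1}$ and in particular kills exactly the $k_1$ copies of $\zeta_1$ (as $\zeta_0 = 1$), the $\zeta$-count drops by $k_1$: $a(\varphi_i(x)) = a(x) - k_1$. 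Substituting into the degree--weight relation, the two occurrences of $k_1$ cancel,
\[
|\varphi_i(x)| = 2(j - k_1) - 2\bigl(a(x) - k_1\bigr) - b(x) = 2j - 2a(x) - b(x),
\]
and therefore $|x| - |\varphi_i(x)| = 2pj - 2j = 2(p-1)j = qj$.

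The one point requiring care --- and the crux of the whole argument --- is exactly this cancellation: it is what shows the degree drop is the uniform value $qj$ rather than depending on the individual monomial through $k_1$ or through its number of tensor factors. Granting it, $\varphi_i$ carries the degree-$d$ summand of $M_i(j)$ isomorphically onto the degree-$(d - qj)$ summand of $N_{i-1}(\floor{j/p})$, which by definition is the degree-$d$ summand of $\Sigma^{qj} N_{i-1}(\floor{j/p})$. Combined with the already-established fact that $\varphi_i$ is a map of $E(i)_*$-comodules, this yields the desired graded isomorphism of $E(i)_*$-comodules.
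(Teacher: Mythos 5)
Your proposal is correct and takes essentially the same route as the paper: the corollary is stated there as an immediate consequence of the preceding lemma (that $\varphi_i$ is an ungraded isomorphism of $E(i)_*$-comodules onto $N_{i-1}(\floor{j/p})$), and the only detail left implicit is exactly your computation that the degree shift is uniform, namely $|x|-|\varphi_i(x)| = 2pj-2j = qj$ for every monomial $x\in M_i(j)$. Your bookkeeping via $|m| = 2\,\wt(m)-2a(m)-b(m)$, with the $k_1$-dependence cancelling between the weight drop and the loss of the $\zeta_1$-factors, is precisely the right verification.
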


\begin{prop}\label{prop:upbylessthanp}
	There is an isomorphism of graded $E(i)_*$-comodules
	\[
	M_i(pj)\cong \Sigma^{qk} M_i(pj+k)
	\]
	induced by multiplication by $\zeta_1^k$.
\end{prop}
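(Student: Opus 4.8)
The plan is to exhibit the isomorphism explicitly as multiplication by $\zeta_1^k$ and then verify the three properties that this map is $E(i)_*$-colinear, injective, and surjective. First I would check that the map lands where it should. Since $\wt(\zeta_1)=p$, multiplication by $\zeta_1^k$ raises weight by $pk$, so it sends a monomial of weight $p\cdot pj = p^2 j$ to one of weight $p^2 j + pk = p(pj+k)$; hence $\zeta_1^k$ carries $M_i(pj)$ into $M_i(pj+k)$. As $|\zeta_1|=q$, the map raises internal degree by $qk$, and this degree shift is what produces the suspension $\Sigma^{qk}$ in the statement. It then remains to see that this map is a bijective morphism of $E(i)_*$-comodules.

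Colinearity and injectivity are both formal. For colinearity I would use \eqref{eqn:Enzeta}: the generator $\zeta_1$ is $E(i)_*$-coinvariant, $\alpha(\zeta_1)=1\otimes\zeta_1$, and since the coaction is an algebra map we get $\alpha_{E(i)}(\zeta_1^k x) = (1\otimes\zeta_1^k)\alpha_{E(i)}(x)$ for every $x$, which is precisely the statement that multiplication by $\zeta_1^k$ commutes with the coaction. For injectivity I would observe that $\AE{i} = \F_p[\zeta_1,\zeta_2,\ldots]\otimes E(\otau_{i+1},\otau_{i+2},\ldots)$ is free as a module over the polynomial subalgebra $\F_p[\zeta_1]$, so multiplication by $\zeta_1^k$ is injective, in particular on the weight-homogeneous summand $M_i(pj)$.

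The substantive step, and the one I expect to be the main obstacle, is surjectivity; here the hypothesis $0\le k<p$ implicit in the statement is essential. Given a monomial $m = \zeta_1^{a_1}\zeta_2^{a_2}\cdots\otau_{i+1}^{\epsilon_{i+1}}\cdots$ of weight $p(pj+k)$, I must show $a_1\ge k$, for then $m$ is divisible by $\zeta_1^k$ and the quotient has weight $p^2 j$, placing it in $M_i(pj)$. The key is a congruence modulo $p^2$: every generator other than $\zeta_1$ contributes a weight $p^r$ with $r\ge 2$ or $p^s$ with $s\ge i+1\ge 2$ (this is where $i\ge 1$ enters), each divisible by $p^2$, so that $\wt(m)\equiv a_1 p\pmod{p^2}$. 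Comparing with $\wt(m)=p^2 j + pk\equiv pk\pmod{p^2}$ forces $a_1\equiv k\pmod p$, and since $0\le k<p$ the smallest such nonnegative $a_1$ is $k$, giving $a_1\ge k$. This yields surjectivity and completes the identification of multiplication by $\zeta_1^k$ with the claimed isomorphism.
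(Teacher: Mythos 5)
Your proof is correct and is essentially the argument the paper intends: the paper's own proof is just a citation to the $p=2$ analogue (Proposition 3.7 of \cite{Culver2017}), which rests on exactly this multiplication-by-$\zeta_1^k$ map, colinearity of multiplication by a comodule primitive, and the weight-divisibility count. Your explicit surjectivity step—the congruence $\wt(m)\equiv a_1p \pmod{p^2}$, which uses both the implicit hypothesis $0\le k<p$ and the fact that $i\ge 1$ makes all other generators have weight divisible by $p^2$—correctly supplies the details the paper leaves to that citation.
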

\begin{proof}
	Same as for \cite[Prop. 3.7]{Culver2017}.
\end{proof}

Because of this last corollary, we will always make the identification 
\[
M_2(pj)\cong \Sigma^{qpj}N_1(j).
\]

\begin{rmk}\label{rmk:decomposition}
	Since there is a decomposition of $E(2)_*$-comodules, we have 
	\[
	\AE{2}\cong_{E(2)_*}\bigoplus_{k=0}^\infty M_2(k)\cong_{E(2)_*}\bigoplus_{k=0}^\infty \Sigma^{qk}\ellu_{\floor{k/p}}.
	\]
\end{rmk}

\subsection{Exact sequences}

Recall that 
\[
\E2E1\cong E(\otau_2)
\]
and consider the linear map 
\[
\kappa: \AE{1}\to \AE{2}\otimes \E2E1
\]
which is defined on the monomial basis by 
\[
\zeta_1^{k_1}\zeta_2^{k_2}\otau_2^{\epsilon_2}\zeta_3^{k_3}\otau_3^{\epsilon_3}\zeta_4^{k_4}\cdots\mapsto  \zeta_1^{k_1}\zeta_2^{k_2}\zeta_3^{k_3}\otau_3^{\epsilon_3}\cdots \otimes \otau_2^{\epsilon_2}
\]
which takes the $\otau_2$ into $\E2E1$. We endow the right hand side with the diagonal coaction. This is not a map of $E(2)_*$-comodules, as can be seen in the following example, 

\begin{ex}
	In $\AE{1}$, the coaction on $\otau_2$ is 
	\[
	\alpha(\otau_2) = 1\otimes \otau_2+ \otau_0\otimes \zeta_2+\otau_1\otimes \zeta_1^3+\otau_2\otimes 1.
	\]
	On the other hand, the coaction of $\otau_2\otimes 1$ in $\AE{2}\otimes \E2E1$ is 
	\[
	\alpha(1\otimes \otau_2) = \otau_2\otimes 1\otimes 1+1\otimes 1\otimes \otau_2
	\]
\end{ex}

However, the map $\kappa$ is an isomorphism of $\F_p$-vector spaces. Analogously to the prime $p=2$ case, we define the following filtration, 
\[
F^j\AE{1}:= \kappa^{-1}\left( \bigoplus_{k\geq j}M_2(k)\otimes \E2E1\right).
\]
This is a decreasing filtration on $\AE{1}$ which induces a map on the associated graded
\[
E^0\kappa: E^0\AE{1}\to \AE{2}\otimes \E2E1
\]
as in \cite{Culver2017}.

\begin{rmk}
	Unlike in the $p=2$ case, the map $\kappa$ is actually a morphism of $\F_p$-algebras since $\otau_2$ is exterior in $\AE{1}$. So $E^0\kappa$ is also an isomorphism of algebras.
\end{rmk}

Note the following, 

\begin{obs}\label{obs5:bddweight}
	Let $x$ be a monomial in $\AE{1}$. If $x\in F^j\AE{1}$ then the weight of $x$ is bounded below by $pj$. If the exponent of $\otau_2$ in $x$ is 1, then the weight is bounded below by $pj+p^2$.
\end{obs}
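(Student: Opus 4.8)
The plan is to unwind the definition of the filtration $F^\bullet\AE{1}$ and to track weights through $\kappa$ directly; the statement is essentially a bookkeeping consequence of how $\kappa$ relocates the generator $\otau_2$. First I would fix a monomial
\[
x = \zeta_1^{k_1}\zeta_2^{k_2}\otau_2^{\epsilon_2}\zeta_3^{k_3}\otau_3^{\epsilon_3}\zeta_4^{k_4}\cdots \in \AE{1},
\]
with $\epsilon_s\in\{0,1\}$ since the $\otau$'s are exterior, and record its weight using Definition \ref{defn:weight}:
\[
\wt(x) = \sum_{r\geq 1}k_r p^r + \epsilon_2 p^2 + \sum_{s\geq 3}\epsilon_s p^s.
\]

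Next I would apply $\kappa$. By its definition on the monomial basis, $\kappa$ deletes the factor $\otau_2^{\epsilon_2}$ from the $\AE{2}$-component, recording it in the $\E2E1\cong E(\otau_2)$-tensorand, with no reindexing of the remaining $\zeta$'s and $\otau$'s. Thus $\kappa(x) = x'\otimes \otau_2^{\epsilon_2}$ with
\[
x' = \zeta_1^{k_1}\zeta_2^{k_2}\zeta_3^{k_3}\otau_3^{\epsilon_3}\zeta_4^{k_4}\cdots \in \AE{2}.
\]
Comparing the two weight formulas gives the key identity
\[
\wt(x') = \wt(x) - \epsilon_2 p^2,
\]
since the only generator whose weight is omitted is $\otau_2$, of weight $p^2$. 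As every generator of $\AE{1}$ has weight divisible by $p$, both weights are divisible by $p$, and $x'$ lies in the summand $M_2\!\left(\tfrac{\wt(x)-\epsilon_2 p^2}{p}\right)$ of $\AE{2}$.

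Finally I would translate membership in the filtration. By definition $x\in F^j\AE{1}$ exactly when $\kappa(x)\in\bigoplus_{k\geq j}M_2(k)\otimes\E2E1$, which (because $\kappa(x)$ is the single tensor $x'\otimes\otau_2^{\epsilon_2}$) is equivalent to $x'$ having weight at least $pj$, i.e. $\wt(x)-\epsilon_2 p^2\geq pj$. If $\epsilon_2=0$ this reads $\wt(x)\geq pj$; if $\epsilon_2=1$ it reads $\wt(x)\geq pj+p^2$. In either case $\wt(x)\geq pj$, which yields the first assertion, while the case $\epsilon_2=1$ is precisely the second. There is no genuine obstacle here: the only point requiring a moment's care is verifying from the displayed formula for $\kappa$ that the weight of $x'$ is literally $\wt(x)$ with the single $\otau_2$-contribution removed, which is immediate.
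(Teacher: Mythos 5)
Your proof is correct and is exactly the intended argument: the paper states this as an Observation without proof, and its implicit justification is the same characterization you derive (a monomial $x=m\otau_2^{\epsilon}$ with $m\in\AE{2}$ lies in $F^j\AE{1}$ precisely when $\wt(m)\geq pj$, as the paper itself uses in the proof of Lemma \ref{lem: filtration is multiplicative}). Your weight bookkeeping through $\kappa$, including the divisibility-by-$p$ remark, matches this verbatim.
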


Observe that the coproduct formulas in Proposition \ref{prop:E(n)coaction}  show that the filtration $F^\bullet\AE{1}$ is a filtration by $E(2)_*$-comodules.

\begin{lem}\label{lem: filtration is multiplicative}
	The filtration $F^\bullet\AE{1}$ is a multiplicative filtration, i.e. 
	\[
	F^i\AE{1}\cdot F^j\AE{1}\subseteq F^{i+j}\AE{1}.
	\]
	Thus $E^0\AE{1}$ is an $E(2)_*$-comodule algebra.
\end{lem}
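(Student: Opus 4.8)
The plan is to reduce the statement to a bookkeeping fact about monomials: the filtration degree of a monomial in $\AE{1}$ is controlled by its weight together with its $\otau_2$-exponent, and both of these quantities are additive under multiplication. Concretely, for a monomial
\[
x = \zeta_1^{k_1}\zeta_2^{k_2}\otau_2^{\epsilon_2}\zeta_3^{k_3}\otau_3^{\epsilon_3}\cdots \in \AE{1},
\]
write $\epsilon_2(x):=\epsilon_2$ for the exponent of $\otau_2$. Applying the definition of $\kappa$, its image is $\kappa(x)=y\otimes\otau_2^{\epsilon_2}$ with $y\in\AE{2}$ of weight $\wt(y)=\wt(x)-\epsilon_2(x)p^2$; this is always divisible by $p$, so $y\in M_2(k)$ exactly when $k=(\wt(x)-\epsilon_2(x)p^2)/p$. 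Unwinding the definition of the filtration, this shows that $x\in F^j\AE{1}$ if and only if $\wt(x)-\epsilon_2(x)p^2\geq pj$, which is precisely Observation \ref{obs5:bddweight}. I would record this as the basic characterization and set $f(x):=(\wt(x)-\epsilon_2(x)p^2)/p$, so that $x\in F^j\AE{1}\iff f(x)\geq j$.

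Next I would reduce multiplicativity to a statement about monomials. Since each $F^j\AE{1}$ is spanned by monomials and these are subspaces, it suffices to show $xy\in F^{i+j}\AE{1}$ for monomials $x\in F^i\AE{1}$ and $y\in F^j\AE{1}$. In $\AE{1}=\F_p[\zeta_1,\zeta_2,\ldots]\otimes E(\otau_2,\otau_3,\ldots)$ the product of two monomials is either zero---in which case it lies in every filtration stage---or a scalar multiple of a single monomial $z$. If $z\neq 0$ then the exterior supports of $x$ and $y$ are disjoint, so in particular $\epsilon_2(z)=\epsilon_2(x)+\epsilon_2(y)$; together with additivity of weight this gives $f(z)=f(x)+f(y)$. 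Hence $f(xy)=f(x)+f(y)\geq i+j$ and $xy\in F^{i+j}\AE{1}$, which is the desired containment.

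Finally I would pass to the associated graded. The filtration is decreasing and, by the observation recorded just before the lemma, is a filtration by $E(2)_*$-comodules; multiplicativity makes the induced pairing $F^i/F^{i+1}\otimes F^j/F^{j+1}\to F^{i+j}/F^{i+j+1}$ well defined, so $E^0\AE{1}$ is a graded algebra. Since $\AE{1}$ is an $E(2)_*$-comodule algebra (its coaction is an algebra map), the coaction descends to the associated graded as an algebra map, exhibiting $E^0\AE{1}$ as an $E(2)_*$-comodule algebra.

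I do not expect a genuine obstacle here; the one point requiring care is the $-\epsilon_2(x)p^2$ correction in the filtration degree and the verification that $\epsilon_2$ is additive on nonzero products, which relies on $\otau_2$ being exterior. A slicker, essentially equivalent, route avoids monomials entirely: since $\kappa$ is an algebra isomorphism (as noted in the preceding remark), it suffices to check that the filtration $\bigoplus_{k\geq j}M_2(k)\otimes\E2E1$ on the target is multiplicative, which is immediate from $M_2(k)\cdot M_2(k')\subseteq M_2(k+k')$ and the closure of $\E2E1$ under multiplication; transporting along $\kappa^{-1}$ then yields the result.
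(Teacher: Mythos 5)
Your main argument is correct and is essentially the paper's own proof: the paper likewise writes each monomial uniquely as $m\otau_2^{\epsilon}$ with $m$ a monomial in $\AE{2}$, characterizes $x\in F^i\AE{1}$ by $\wt(m)\geq pi$, and concludes from additivity of weight (together with $\otau_2$ being exterior, so that products of monomials are again monomials up to sign or zero) that $xx'\in F^{i+j}\AE{1}$. Your closing alternative---transporting the evidently multiplicative filtration $\bigoplus_{k\geq j}M_2(k)\otimes \E2E1$ along the algebra isomorphism $\kappa$---is a legitimate shortcut the paper does not spell out, but the substance is the same.
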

\begin{proof}
	Let $x\in \AE{1}$ be a monomial. Then $x$ can be uniquely expressed as $m\otau_2^{\epsilon}$ where $m$ is a monomial in $\AE{2}$. Note that $x\in F^i\AE{1}$ if and only if $\wt(m)$ is at least $pi$. So let $x\in F^i\AE{1}$ and $x'\in F^j\AE{1}$ be monomials. Then 
	\[
	xx' = mm'\otau_2^{\epsilon+\epsilon'}.
	\]
	Observe that the weight of $mm'$ is at least $p(i+j)$. So $xx'\in F^{i+j}\AE{1}$.
\end{proof}

\begin{prop}
	The map $E^0\kappa$ is an isomorphism of $E(2)_*$-comodule algebras. 
\end{prop}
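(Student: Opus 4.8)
Recall that $E^0\kappa$ has already been shown to be an isomorphism of $\F_p$-algebras, and that by Lemma \ref{lem: filtration is multiplicative} both the source $E^0\AE{1}$ and the target $\AE{2}\otimes\E2E1$ are $E(2)_*$-comodule algebras. The plan is therefore to prove only that $E^0\kappa$ commutes with the two coactions; since $E^0\kappa$ and both coactions are algebra maps, it is enough to check this on a generating set of $E^0\AE{1}$ as an algebra, namely the classes of the $\zeta_k$ for $k\geq 1$ and of the $\otau_n$ for $n\geq 2$.

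I would first handle the easy generators. For $\zeta_k$ and for $\otau_n$ with $n\geq 3$, the map $\kappa$ sends the generator into $\AE{2}\otimes 1$ and leaves the $\E2E1$-tensor factor untouched. Comparing the $E(2)_*$-coaction on $\AE{1}$, obtained by projecting the coproduct formulas \eqref{eqn2:coprod-zetas} and \eqref{eqn2:coprod-otaus} onto $E(2)_*$, with the coaction on $\AE{2}$ recorded in Proposition \ref{prop:E(n)coaction}, one sees that the two match term for term; hence $\kappa$ is already a strict $E(2)_*$-comodule map on these elements, and so $E^0\kappa$ commutes with the coaction on their classes.

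The substance of the argument is the single generator $\otau_2$, where $\kappa$ genuinely fails to be a comodule map, as witnessed by the Example above. Here I would compute the failure directly and show it dies in the associated graded. Applying $(1\otimes\kappa)\circ\alpha$ and $\alpha\circ\kappa$ to $\otau_2$, the two results differ precisely by the error terms $\otau_0\otimes(\zeta_2\otimes 1)+\otau_1\otimes(\zeta_1^p\otimes 1)$ arising from the middle summands of \eqref{eqn2:coprod-otaus}. The key point is that $\otau_2$ has filtration degree $0$, since its $\AE{2}$-part is $1$, of weight $0$; by contrast the $\AE{1}$-factors $\zeta_2$ and $\zeta_1^p$ of the error terms each have $\AE{2}$-part of weight $p^2$, so by the description of the filtration in the proof of Lemma \ref{lem: filtration is multiplicative} they lie in $F^p\AE{1}\subseteq F^1\AE{1}$ and therefore vanish in $F^0\AE{1}/F^1\AE{1}$. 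Thus the induced coaction on the class $[\otau_2]$ retains only the diagonal terms $1\otimes[\otau_2]+\otau_2\otimes[1]$, which is exactly the diagonal coaction on $\kappa(\otau_2)=1\otimes\otau_2$ computed in the Example.

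With all generators verified, $E^0\kappa$ is a morphism of $E(2)_*$-comodule algebras, and being already an isomorphism of algebras (with inverse automatically a comodule map), it is an isomorphism of $E(2)_*$-comodule algebras. I expect the only genuine obstacle to be the filtration estimate for the $\otau_2$ error terms; the remainder is a routine comparison of coproduct formulas, and the reduction to generators is precisely what the multiplicativity established in Lemma \ref{lem: filtration is multiplicative} is designed to allow.
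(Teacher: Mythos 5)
Your proof is correct and takes essentially the same route as the paper's: both reduce to checking the coaction on algebra generators, observe that the only problematic generator is $\otau_2$, and note that the offending terms $\otau_0\otimes\zeta_2+\otau_1\otimes\zeta_1^p$ lie in positive filtration and hence vanish in the associated graded. The paper's proof simply asserts that the class of $\otau_2$ is a comodule primitive in $E^0\AE{1}$; your filtration estimate (weight $p^2$ puts $\zeta_2$ and $\zeta_1^p$ in $F^p\AE{1}\subseteq F^1\AE{1}$) is precisely the verification of that assertion.
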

\begin{proof}

We know that $E^0\kappa$ is a bijection and a map of algebras. So it suffices to check that $E^0\kappa$ commutes with the coproduct on a generating set for $E^0\AE{1}$. One such generating set is 
\[
\{\zeta_1, \zeta_2, \otau_2, \zeta_3, \otau_3, \ldots\}.
\]
In $E^0\AE{1}$, the coaction on all of these generators are the same as in $\AE{1}$ except for $\otau_2$, which is a comodule primitive in $E^0\AE{1}$. Since $1\otimes \otau_2$ is a comodule primitive in $\AE{2}\otimes E(\otau_2)$, the coaction commutes with $E^0\kappa$. 
\end{proof}


Define quotients
\[
Q^j\AE{1} = \AE{1}/F^{j+1}\AE{1}
\]
Then there is a finite decreasing filtration on $Q^j\AE{1}$ which gives an isomorphism
\[
E^0\kappa:E^0Q^j\AE{1}\to  \tBPu{2}_j\otimes \E2E1.
\]

\begin{rmk}\label{rmk:ext-iso}
	Since there is a finite decreasing filtration on $Q^j\AE{1}$, we obtain a strongly convergent spectral sequence 
	\[
	E_2=\Ext_{E(2)_*}(\E2E1\otimes \tBPu{2}_j)\implies \Ext_{E(2)_*}(Q^{j}\AE{1}).
	\]
	Note that the spectral sequence is linear over $\F_p[v_0,v_1,v_2]$ and observe that the $E_2$-term is $\Ext_{E(1)_*}(\tBPu{2}_j)$. Since the $E_2$-term is a direct sum of $v_1$-torsion in $\Ext^0$ and a $v_1$-torsion free component concentrated in even degrees,   it follows that the spectral sequence collapses. Consequently, we often regard $Q^j\AE{1}$ as $\tBPu{2}_j\otimes \E2E1$.
\end{rmk}

We now develop the analogs of the exact sequences found in \cite{Culver2017}.

\begin{lem}\label{lem: exact sequences}
	Let $j\geq 1$ and let $i\in \{0,1,\ldots, p-1\}$. Then there is an exact sequence of $E(2)_*$-comodules 
	\begin{equation}\label{eqn: exact sequence}
	0\to \Sigma^{qpj}\ellu_j\otimes \ellu_{i}\to \ellu_{pj+i}\to Q^{pj-1}\AE{1}\to \bigoplus_{k=i+1}^{p-1}\Sigma^{\varphi(j,k)}\ellu_{j-1}\to 0.
	\end{equation}
	where 
	\[
	\varphi(j,k):= q(p(j-1)+k)+|\otau_2|
	\]
	When $i=p-1$, then we take the right-hand term to be 0, so that this exact sequence is actually a short exact sequence. 
	\end{lem}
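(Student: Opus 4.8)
The plan is to construct the exact sequence \eqref{eqn: exact sequence} by analyzing the weight decomposition of the spaces involved, using the identifications established earlier in this section. First I would recall from Remark \ref{rmk:decomposition} and Corollary \ref{cor5:comod-iso} that $\AE{1}$ decomposes as a direct sum of shifted Brown-Gitler comodules $\ellu_{\floor{k/p}}$, one for each weight class $M_1(k)$, and similarly for the $\tBP{2}$-analogues. The four terms in \eqref{eqn: exact sequence} should all be understood through their weight gradings: the leftmost term $\Sigma^{qpj}\ellu_j\otimes \ellu_i$ and the spectrum $\ellu_{pj+i}$ both sit inside $\AE{1}$, while $Q^{pj-1}\AE{1}$ is the quotient $\AE{1}/F^{pj}\AE{1}$, which by Remark \ref{rmk:ext-iso} is identified with $\tBPu{2}_{pj-1}\otimes \E2E1$.

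The key computation is to track, weight-by-weight, how the monomials of $\AE{1}$ distribute among these four pieces. The map $\ellu_{pj+i}\to Q^{pj-1}\AE{1}$ is the composite of the inclusion $\ellu_{pj+i}\hookrightarrow \AE{1}$ with the quotient projection; its kernel consists of monomials of weight at most $p(pj+i)$ that lie in the filtration $F^{pj}\AE{1}$, i.e. those whose $\otau_2$-free part already has weight $\geq p\cdot pj$. The key step is to show this kernel is precisely the image of the leftmost term under the map sending $\ellu_j\otimes \ellu_i$ (via $\varphi_2^{-1}$ and multiplication) into $\AE{1}$; here one uses that weights are additive and that $\otau_2$ contributes weight $p^2$ (Observation \ref{obs5:bddweight}). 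The cokernel of $\ellu_{pj+i}\to Q^{pj-1}\AE{1}$ must then be matched against the summand $\bigoplus_{k=i+1}^{p-1}\Sigma^{\varphi(j,k)}\ellu_{j-1}$: these are exactly the weight classes in $Q^{pj-1}\AE{1}$ carrying a factor of $\otau_2$ (hence the degree shift $\abs{\otau_2}$ in $\varphi(j,k)$) whose remaining weight, after removing $\otau_2$, forces the $\zeta_1$-exponent modulo $p$ to equal some $k$ with $i<k\leq p-1$, and each such class is isomorphic to a shift of $\ellu_{j-1}$ by Corollary \ref{cor5:comod-iso}.

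I would organize the verification by splitting $\AE{1}$ according to the exponent of $\otau_2$ and the residue of the $\zeta_1$-exponent modulo $p$, then checking exactness at each of the four spots separately. Exactness at $\ellu_{pj+i}$ amounts to injectivity of the left map, which is immediate once one writes down the monomial formula. Exactness at $Q^{pj-1}\AE{1}$ is the heart of the argument: one must verify that the image of $\ellu_{pj+i}$ equals the kernel of the surjection onto the $\ellu_{j-1}$-summand, which is a bookkeeping statement about which weight-$pj$-or-higher monomials survive. That all the relevant maps are $E(2)_*$-comodule maps follows because $F^\bullet\AE{1}$ is a filtration by subcomodules and the identifications of Corollary \ref{cor5:comod-iso} are comodule isomorphisms.

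The main obstacle will be the precise index bookkeeping in the cokernel term: getting the range $i+1\leq k\leq p-1$ and the degree shift $\varphi(j,k)=q(p(j-1)+k)+\abs{\otau_2}$ exactly right, including the degenerate boundary case $i=p-1$ where the range is empty and the sequence collapses to a short exact sequence. This requires carefully distinguishing, among monomials of $Q^{pj-1}\AE{1}$ containing $\otau_2$, which ones are hit by $\ellu_{pj+i}$ and which survive to the quotient; the cutoff is governed by comparing $p(pj+i)$ against the weights forced by the floor function $\floor{\,\cdot\,/p}$ in the Brown-Gitler indexing. I expect the $p=2$ argument of \cite{Culver2017} to serve as a template, but the larger residue range modulo $p$ (rather than just parity) is exactly where the odd-primary case genuinely differs and demands the most care.
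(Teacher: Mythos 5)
Your overall route is the same as the paper's: you take the middle map to be the composite $\psi\colon \ellu_{pj+i}\hookrightarrow \AE{1}\twoheadrightarrow Q^{pj-1}\AE{1}$ and identify its kernel and cokernel by weight bookkeeping. Your kernel analysis agrees with the paper's: a monomial $m\otau_2^{\epsilon}$ of $\ellu_{pj+i}$ lying in $F^{pj}\AE{1}$ must have $\epsilon=0$, since $\otau_2$ adds $p^2>pi$ to the weight, so $\ker\psi=\bigoplus_{k=0}^{i}M_2(pj+k)$, which Corollary \ref{cor5:comod-iso} and Proposition \ref{prop:upbylessthanp} convert into $\Sigma^{qpj}\ellu_j\otimes\ellu_i$.

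The genuine gap is in your cokernel step. You claim each surviving class $m\otau_2$ contributes a shifted copy of $\ellu_{j-1}$ ``by Corollary \ref{cor5:comod-iso}'', with $\otau_2$ contributing only the degree shift $|\otau_2|$, and you justify the comodule-level identification by the fact that $F^{\bullet}\AE{1}$ is a filtration by subcomodules. That is not enough. The element $\otau_2$ is \emph{not} an $E(2)_*$-comodule primitive of $\AE{1}$: its coaction is $\alpha(\otau_2)=1\otimes\otau_2+\otau_0\otimes\zeta_2+\otau_1\otimes\zeta_1^p+\otau_2\otimes 1$, and this is exactly why $\kappa$ (and the identification of $Q^{pj-1}\AE{1}$ with $\tBPu{2}_{pj-1}\otimes\E2E1$ from Remark \ref{rmk:ext-iso}, which you invoke) fails to be a comodule map on the nose; the paper's example immediately after the definition of $\kappa$ records precisely this failure. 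Consequently it is not automatic that a cokernel class $m\otau_2$ satisfies $\alpha(m\otau_2)=\alpha(m)(1\otimes\otau_2)$, which is what is needed for the cokernel to be $\bigoplus_{k}M_2(p(j-1)+i+k)\otimes\F_p\{\otau_2\}$ with $\otau_2$ a trivial tensor factor, and hence for the shift $\varphi(j,k)=q(p(j-1)+k)+|\otau_2|$ and the comodule statement of the lemma to be correct. The paper closes this with a computation in the subquotient: writing $\alpha(m\otau_2)=\alpha(m)\cdot\alpha(\otau_2)$ and letting $t\otimes x$ run over the terms of $\alpha(m)$, the terms $t\otau_0\otimes x\zeta_2$ and $t\otau_1\otimes x\zeta_1^p$ vanish because $x\zeta_2$ and $x\zeta_1^p$ lie in $F^{pj}\AE{1}$ and hence are already zero in $Q^{pj-1}\AE{1}$, while the terms $t\otau_2\otimes x$ vanish because $\wt(x)=\wt(m)\leq p^2j-p$ puts $x$ in the image of $\psi$, hence zero in the cokernel. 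Your proposal needs this (or an equivalent) argument; without it you have only identified the fourth term as a graded vector space, not as an $E(2)_*$-comodule, and the exact sequence of the lemma is asserted in the category of $E(2)_*$-comodules.
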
 
\begin{proof}

Consider the following composite 
\[
\begin{tikzcd}
	\psi:\ellu_{pj+i}\arrow[r,hook] & \AE{1}\arrow[r, two heads] & Q^{pj-1}\AE{1}
\end{tikzcd}
\]
where the first is the inclusion map and the second is the projection. Observe that this is a map of $E(2)_*$-comodules. We can describe this map on the associated graded. More explicitly, we have the diagram
\[
\begin{tikzcd}
	\ellu_{pj+i}\arrow[r,hook] \arrow[dr]& \AE{1}\arrow[r, two heads]\arrow[d, "\kappa"] & Q^{pj-1}\AE{1}\arrow[d, "\kappa"]\\
	& \AE{2}\otimes E(\otau_2)\arrow[r, two heads]&\tBPu{2}_{pj-1}\otimes E(\otau_2)
\end{tikzcd}
\]
and the bottom composite is given on monomials by 
\[
m\otau_2^\epsilon\mapsto 
\begin{cases}
		m\otimes \otau_2^{\epsilon} & \wt(m)\leq p^2j-p\\
		0 & \text{otherwise}
\end{cases}.
\]
Let $\widetilde{\psi}$ denote the bottom composite. Since $\kappa$ is an $\F_p$-isomorphism, to determine the kernel of $\psi$, it is enough to determine the kernel of $\widetilde{\psi}$. Observe that $m\otau_2^\epsilon$ is in the kernel if and only if 
\[
p^2j-p<\wt(m)\leq p^2j+pi.
\]
Note that, if $\epsilon=1$, then the weight of $m\otau_2$ is bounded below by $p^2j+p^2-p$, which is greater than $p^2+pi$. So $m\otau_2\notin \ellu_{pj+i}$, and so $\epsilon=0$. This shows that 
\[
\ker\psi = \bigoplus_{k=0}^iM_2(pj+k).
\]
Applying Corollary \ref{cor5:comod-iso} and Proposition \ref{prop:upbylessthanp} produces the following isomorphisms 
\begin{align*}
	\ker \psi &= \bigoplus_{k=0}^iM_2(pj+k)\\
			  &\cong \bigoplus_{k=0}^iM_2(pj)\cdot \{\zeta_1^k\}\\
			  &\cong \bigoplus_{k=0}^i \Sigma^{qpj}\ellu_j\cdot\{\zeta_1^k\}\\
			  &\cong \Sigma^{qpj}\ellu_j\otimes \ellu_i
\end{align*}
as desired. 

It remains to determine the cokernel of $\psi$. Note that when $i=p-1$, then the map $\widetilde{\psi}$ is surjective, and hence so is $\psi$. So in this case the cokernel is trivial and we get a short exact sequence. 

So assume that $i\neq p-1$. We will first identify $\coker\psi$ as an $\F_p$-vector space by determining $\coker \widetilde{\psi}$. Observe that the cokernel of $\widetilde{\psi}$ is given, as an $\F_p$-vector space, by 
\[
\mathrm{coker}\widetilde{\psi}=\bigoplus_{k=1}^{p-i-1}M_2(p(j-1)+i+k)\otimes \F_p\{\otau_2\}.
\]
To see this, note that if $\epsilon=0$, then $m\otimes 1$ is necessarily in the image of $\widetilde{\psi}$. Thus the cokernel will be spanned by elements of the form $m\otimes \otau_2$ for appropriate $m$ in $\AE{2}$. The element $m\otimes\otau_2$ will not be in the image of $\widetilde{\psi}$ if and only if $m\otau_2\notin \ellu_{pj+i}$. This is equivalent to the inequality
\[
p^2j+pi<\wt(m\otau_2) = \wt(m)+p^2
\]
Hence, $m\otimes \otau_2$ is not in the image of $\widetilde{\psi}$ if and only if the weight of the monomial $m$ satisfies the following inequality:
\[
p^2j-p^2+pi = p(p(j-1)+i)<\wt(m)\leq p^2j-p.
\]
The second inequality comes from the fact that $m\in \tBP{2}_{pj-1}$. This gives the declared $\F_p$-vector space.

We wish to show that there is an isomorphism of $E(2)_*$-comodules
\[
\mathrm{coker}\psi\cong_{E(2)_*}\bigoplus_{k=1}^{p-i-1}M_2(p(j-1)+i+k)\otimes \F_p\{\otau_2\}
\]
To show this, consider an element of $\coker\psi$ which is represented by the monomial $m\otau_2$ where $m$ is a monomial in $\AE{2}$. Since $\AE{1}$ is a comodule algebra, one has
\[
\alpha(m\otau_2) = \alpha(m)(\otau_2\otimes 1+\otau_0\otimes \zeta_2+\otau_1\otimes \zeta_1^p+1\otimes \otau_2).
\]
Let $t\otimes x$ denote a term occurring in $\alpha(m)$\footnote{the element $x$ need not be a monomial}. Note that 
\[
\wt(m) = \wt(x)
\]
and $x\in \AE{2}$ since $m\in \AE{2}$, and as
\[
\wt(m)\geq p^2(j-1)+pi+p
\]
it follows that $m,x\in F^{p(j-1)+i+1}\AE{1}$. Observe that $\zeta_2$ and $\zeta_1^p$ are both in $F^p\AE{1}$. Thus $x\zeta_2$ and $x\zeta_1^p$ are in $F^{pj+i+1}\AE{1}$. But as $Q^{pj-1}\AE{1} = \AE{1}/F^{pj}\AE{1}$, it follows that these are 0 in $Q^{pj-1}\AE{1}$, and so are zero in the cokernel. So the terms $\otau_0 t\otimes x\zeta_2$ and $\otau_1 t\otimes x\zeta_1^p$ are zero in $\alpha(m\otau_2)$. Consider the term $\otau_2t\otimes x$. Note that 
\[
\wt(m)\leq p^2j-p
\]
and so $m$ and $x$ are in the image of $\psi$, and so are 0 in the cokernel. Combining these observations shows that 
\[
\alpha(m\otau_2) = \alpha(m)(1\otimes \otau_2)
\]
establishing the desired comodule isomorphism. By another application of Corollary \ref{cor5:comod-iso}, we obtain the desired exact sequence.
	\end{proof}

\begin{rmk}\label{rmk:inductive map}
	We can explicitly describe the first map in the exact sequence above. From Remark \ref{rmk:inverse-map}, the first map 
	\[
	0\to \Sigma^{qpj}\ellu_j\otimes \ellu_i\to \ellu_{pj+i}
	\]
	is given by 
	\[
	\zeta_1^{i_1}\zeta_2^{i_2}\otau_2^{\epsilon_2}\zeta_3^{i_3}\cdots\otimes \{\zeta_1^k\mid 0\leq k\leq i\} \mapsto \zeta_1^a\zeta_2^{i_1}\zeta_3^{i_2}\otau_3^{\epsilon_2}\zeta_4^{i_3}\cdots\cdot\{\zeta_1^k\mid 0\leq k\leq i\}
	\]
	where
	\[
	a = pj-p^{-1}\wt(\zeta_2^{i_1}\zeta_3^{i_2}\otau_3^{\epsilon_2}\zeta_4^{i_3}\cdots).
	\]
\end{rmk}

\begin{rmk}
	Associated to these exact sequences are spectral sequences, which will be analyzed in the next section to give an inductive procedure for computing $\Ext_{E(2)_*}(\ellu_j)$. Since we are actually interested in $\Ext_{E(2)}(\AE{2})$, it follows from Remark \ref{rmk:decomposition} that we should compute $\Ext_{E(2)_*}(\Sigma^{qpj}\ellu_j)$.
\end{rmk}

\begin{rmk}
	The proof Lemma \ref{lem: exact sequences} can be adapted to the $p=2$ case. In particular, one can give a single argument to show the existence of the exact sequences developed in \cite{Culver2017}.
\end{rmk}

\section{Calculations and applications}\label{sec:calculations}

In this section, following, \cite{Culver2017} and \cite{BOSS}, we analyze the spectral sequences induced by the exact sequences produced in the previous section. We will apply this in particular to the prime $p=3$ and compute $\Ext_{E(2)_*}(\Sigma^{12j}\ellu_{j})$ for small values of $j$. We continue to let $q:= 2(p-1)p$.


\subsection{Inductive calculations}

We will now develop an inductive technique to calculate the $v_2$-torsion free component of $\Ext_{E(2)_*}(\ellu_j)$, based on the exact sequences developed in the previous section. The content of this section is adapted from \cite{Culver2017}. Following \cite{BOSS}, we regard the exact sequences from Lemma \ref{lem: exact sequences} as giving spectral sequences converging to $\Ext_{E(2)_*}(\ellu_{pj+i})$ for $i=0, \ldots, p-1$. As in \cite{BOSS}, we write 
\[
\bigoplus M_i[k_i]\implies M
\]
to denote the existence of a spectral sequence
\[
\bigoplus \Ext_{E(2)_*}^{s-k_i, t+k_i}(M_i)\implies \Ext^{s,t}_{E(2)_*}(M),
\]
and we abbreviate $M_i[0]$ by $M_i$. Thus, Lemma \ref{lem: exact sequences} shows that for each $i\in \{0, \ldots, p-1\}$, there is a spectral sequences of the form 
\begin{equation}\label{eqn:inductiveSS}
	\hspace{40pt}\left(\Sigma^{qpj}\ellu_j\otimes \ellu_i\right)\oplus Q^{pj-1}\oplus \left(\bigoplus_{k=i+1}^{p-1}\Sigma^{q(p(j-1)+k)+|\otau_2|}\ellu_{j-1}[1]\right)\implies \ellu_{pj+i}.
\end{equation}
Again, if $i=p-1$, then the term involving $\ellu_{j-1}[1]$ is actually 0.

In order to carry out this calculation, we need to be able to calculate the Ext groups of $Q^j\AE{1}$. As mentioned in Remark \ref{rmk:ext-iso}, there is an isomorphism 
\[
\Ext_{E(2)_*}(\E2E1\otimes \tBPu{2}_j)\cong \Ext_{E(1)_*}\tBPu{2}_j.
\]
So it is enough to compute $\Ext_{E(1)_*}(\tBPu{2}_j)$. This is what we will do first. This part is analogous to the corresponding calculation in \cite{Culver2017}.

\begin{lem}
	For any $j$, there are isomorphisms
	\[
	\tBPu{2}_j\cong_{E(2)_*}\bigoplus_{0\leq k\leq j}M_2(k)\cong_{E(2)_*}\bigoplus_{0\leq k\leq j}\Sigma^{qk}\ellu_{\floor{k/p}}\cong_{E(1)_*}\bigoplus_{k=0}^j\bigoplus_{m=0}^{\floor{k/p}}\Sigma^{2k+2m}\HZu_{\floor{m/p}}
	\]
\end{lem}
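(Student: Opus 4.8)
The plan is to prove the chain of isomorphisms by reading off each link from results already established in the excerpt. The first isomorphism $\tBPu{2}_j\cong_{E(2)_*}\bigoplus_{0\leq k\leq j}M_2(k)$ is immediate from the definitions: by Definition \ref{defn:BrownGitlercomodule}, $\tBPu{2}_j=N_2(j)$ is the span of monomials of weight $\leq pj$, and Definition \ref{defn:weightequalcomodule} says $M_2(k)$ is the span of monomials of weight exactly $pk$. Since the weight of every monomial in $\AE{2}$ is a nonnegative multiple of $p$ (all generators have weight a power of $p$, and $p\geq 3$ divides $p^k$ for $k\geq 1$ while $\wt(\zeta_1)=p$), the monomials of weight $\leq pj$ are exactly those of weight $pk$ for some $0\leq k\leq j$, and the $E(2)_*$-comodule splitting is the weight decomposition noted just after Definition \ref{defn:weightequalcomodule}. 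The second isomorphism is then a direct application of Corollary \ref{cor5:comod-iso}, which gives $M_2(k)\cong\Sigma^{qk}N_1(\floor{k/p})=\Sigma^{qk}\ellu_{\floor{k/p}}$ for each $k$.

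The third (and genuinely new) isomorphism, passing from $E(2)_*$ to $E(1)_*$, is where the real content lies. Here I would restrict the coaction to $E(1)_*$ and apply the very lemma being built up inductively, but one dimension lower: namely the $i=1$ case of this same statement asserts that $\ellu_m=N_1(m)\cong_{E(1)_*}\bigoplus_{0\leq n\leq m}M_1(n)\cong_{E(1)_*}\bigoplus_{0\leq n\leq m}\Sigma^{qn}N_0(\floor{n/p})$, using Corollary \ref{cor5:comod-iso} again with $i=1$. Since $N_0(\floor{n/p})=\HZu_{\floor{n/p}}$ (as $\tBP{0}=H\Z$ and its Brown--Gitler comodules are the $\HZu$'s), substituting this into the second expression and reindexing the double sum yields the desired form. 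I would set $q=2(p-1)p$ and carefully track the suspension exponents: an $M_2(k)$ contributes $\Sigma^{qk}\ellu_{\floor{k/p}}$, and expanding $\ellu_{\floor{k/p}}$ over $E(1)_*$ introduces a further $\Sigma^{qm}$ for $0\leq m\leq\floor{k/p}$, so the total shift on each $\HZu_{\floor{m/p}}$ summand is $\Sigma^{qk+qm}$; one then checks this matches the claimed $\Sigma^{2k+2m}$ after unwinding the normalization of $q$ used in this section.

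The main obstacle I anticipate is purely bookkeeping rather than conceptual: making the degree shifts consistent. The excerpt's Conventions set $q:=2(p-1)$, while the opening of Section \ref{sec:calculations} redefines $q:=2(p-1)p$, and the target exponent is written as $2k+2m$ rather than in terms of $q$. I would need to reconcile these normalizations, confirming that the internal degrees produced by iterating Corollary \ref{cor5:comod-iso} collapse correctly to $\Sigma^{2k+2m}$ once the base case $\HZu_j=N_0(j)$ is placed in its correct internal degree. Beyond this indexing care, the argument is a straightforward two-step iteration of the comodule isomorphism $M_i(j)\cong\Sigma^{qj}N_{i-1}(\floor{j/p})$, first from $\AE{2}$ down to $\AE{1}$ and then from $\AE{1}$ down to $\AE{0}=\HZu$, so no new structural input is required.
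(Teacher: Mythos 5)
Your proposal is correct and is essentially the paper's own argument: the paper's proof is a one-line citation of the $p=2$ analogue (\cite[Lemma 3.22]{Culver2017}), whose content is exactly your two-step iteration --- the weight decomposition into the subcomodules $M_i(k)$, followed by Corollary \ref{cor5:comod-iso} applied first with $i=2$ and then with $i=1$, using that $E(2)_*$-comodule isomorphisms restrict to $E(1)_*$-comodule isomorphisms. The one point you left hanging should, however, be settled in the opposite direction from what you suggest: no choice of normalization reconciles your (correct) total shift $\Sigma^{qk+qm}$, $q=2(p-1)$, with the stated $\Sigma^{2k+2m}$; the exponent $2k+2m$ in the lemma is a typo carried over from the $p=2$ case, where $q=2$. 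A quick sanity check: for $p=3$ and $j=1$ one has $\tBPu{2}_1=\F_3\{1,\zeta_1\}$ with $|\zeta_1|=4$, whereas $\Sigma^{2k+2m}$ would place the second generator in degree $2$.
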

\begin{proof}
	The proof is the same as in \cite[Lemma 3.22]{Culver2017}.
\end{proof}

In order to utilize this decomposition as an $E(1)_*$-comodule, we need to know $\Ext_{E(1)_*}(\HZu_k)$ for each natural number $k$. Below, for a comodule $M$, $\Ext(M)^{\langle n\rangle}$ will denote the $n$th Adams cover of $\Ext(M)$, and $\alpha_p(k)$ will denote the sum of the digits in the $p$-adic expansion of $k$.

\begin{lem}[{\cite[Theorem 2 and Prop 3]{CulverNumerical}, \cite[Prop 16.3]{bluebook}}]
	Modulo torsion, one has the isomorphism
	\[
	\Ext_{E(1)_*}(M_1(k))/v_0\text{-tors} \cong \Ext_{E(1)_*}(\Sigma^{2(p-1)k}\F_p)^{\left\langle \frac{k-\alpha_p(k)}{p-1}\right\rangle}
	\]
	Consequently, from \ref{cor5:comod-iso}, there is an isomorphism
	\[
	\Ext_{E(1)_*}(\HZu_k)/v_0\text{-tors}\cong \Ext_{E(1)_*}(\F_p)^{\left\langle \frac{k-\alpha_p(k)}{p-1}\right\rangle}. 
	\] 
\end{lem}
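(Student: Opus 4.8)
The plan is to prove the assertion for $M_1(k)$ first and then obtain the statement for $\HZu_k$ from the comodule isomorphism of Corollary \ref{cor5:comod-iso}. The guiding principle is that, after discarding free summands, $M_1(k)$ should be an \emph{invertible} $E(1)_*$-comodule, so that modulo $v_0$-torsion its $\Ext$ is forced to be a single $\F_p[v_0,v_1]$-tower, i.e.\ a suspension of an Adams cover of $\Ext_{E(1)_*}(\F_p)$. Granting this, the entire content of the lemma collapses to identifying the internal suspension and the cover number.

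First I would compute the relevant Margolis homology. Exactly as in Theorem \ref{thm:Margolis homology} (the same argument as in \cite{bluebook}) one gets $M_*(\tBP{1};Q_0)\cong \F_p[\zeta_1]$ and $M_*(\tBP{1};Q_1)\cong T_1(\zeta_1,\zeta_2,\ldots)$. The weight grading restricts these to the weight $pk$ summand $M_1(k)$, and, precisely as in the proof that each $W_2(k)$ is invertible, both restrictions are one–dimensional: the $Q_0$-generator is $\zeta_1^k$, while the $Q_1$-generator is the unique monomial $\zeta_1^{a_0}\zeta_2^{a_1}\cdots$ whose exponents are the base $p$ digits of $k$, the uniqueness being exactly the uniqueness of $p$-adic expansions. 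Since both Margolis homology groups are one–dimensional, \cite[Lemma 3.5]{uniquenessBSO} shows that $M_1(k)$ is invertible in the stable category of $E(1)_*$-comodules. The free summands that occur for larger $k$ contribute only to $\Ext^0$ and are $v_0$-torsion, so they are killed upon passing to $\Ext/v_0\text{-tors}$.

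It then remains to locate the surviving tower. By the Adams–Priddy description of the Picard group of stable $E(1)_*$-comodules (\cite{uniquenessBSO}), an invertible comodule is pinned down by the internal degrees $d_0,d_1$ of its $Q_0$- and $Q_1$-Margolis generators: $d_0$ fixes the internal suspension and $(d_1-d_0)/\bigl(2(p-1)\bigr)$ is the Adams cover number. Here $d_0=|\zeta_1^k|=2(p-1)k$, which matches the suspension $\Sigma^{2(p-1)k}$, and a short computation gives $d_1=2pk-2\alpha_p(k)$, so that
\[
\frac{d_1-d_0}{2(p-1)}=\frac{k-\alpha_p(k)}{p-1},
\]
which is $v_p(k!)$ by Legendre's formula. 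This is the claimed cover number for $M_1(k)$, and combining it with the isomorphism $M_1(k)\cong \Sigma^{2(p-1)k}\HZu_{\floor{k/p}}$ of Corollary \ref{cor5:comod-iso} (together with the evident re-grading) yields the corresponding statement for the comodules $\HZu_k$.

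The step I expect to be the main obstacle is the last one. The Margolis homology only records the two internal degrees $d_0,d_1$; converting the pair $(d_0,d_1)$ into a genuine internal suspension \emph{together with} an Adams cover requires the full strength of the Adams–Priddy classification and careful control of the $v_0$-divisibility along the tower. This is the numerical heart of the matter and is precisely what is carried out in \cite[Theorem 2 and Prop 3]{CulverNumerical} and \cite[Prop 16.3]{bluebook}, with Legendre's formula supplying the passage from the degree difference $d_1-d_0$ to the valuation $v_p(k!)=\tfrac{k-\alpha_p(k)}{p-1}$.
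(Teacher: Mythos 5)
The paper never proves this lemma: it is imported wholesale from \cite[Theorem 2 and Prop 3]{CulverNumerical} and \cite[Prop 16.3]{bluebook}, with no argument given in the text. So the only fair comparison is with the toolkit the paper deploys for its analogous in-text results, and by that measure your proposal is correct and is essentially the argument the paper itself runs for $W_2(k)$ and $\overline{W}_2(k)$: the $Q_i$-actions preserve weight, so the Margolis homology of $\AE{1}$ (namely $\F_p[\zeta_1]$ for $Q_0$ and $T_1(\zeta_1,\zeta_2,\ldots)$ for $Q_1$) splits along weights; the weight-$pk$ pieces are one-dimensional, spanned by $\zeta_1^k$ and by the base-$p$ digit monomial; \cite[Lemma 3.5]{uniquenessBSO} then gives stable invertibility; and the stable class is determined by the two Margolis degrees. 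Your numerology is also right: $d_0=2(p-1)k$, $d_1=2pk-2\alpha_p(k)$, and your conversion rule is the correct one, since for $\Sigma^a\Omega^b\F_p$ one has $(d_0,d_1)=(a+b,\,a+b(2p-1))$ while $\Ext^{s,t}(\Sigma^a\Omega^b\F_p)\cong\Ext^{s+b,t}(\Sigma^a\F_p)$ for $s\geq 1$, i.e.\ the $b$-th Adams cover of $\Ext_{E(1)_*}(\Sigma^{a+b}\F_p)$; hence suspension $\Sigma^{d_0}$ and cover index $(d_1-d_0)/(2(p-1))$, exactly as you claim. Deferring the Picard-group classification itself to the cited references is no more of a gap than the paper's outright citation, and your sketch buys a self-containedness the paper lacks.

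One caution, which your own method actually exposes if you push it through: the ``consequently'' deducing the second display from Corollary \ref{cor5:comod-iso} is not a formal substitution. That corollary gives $M_1(pk)\cong\Sigma^{qpk}\HZu_k$ (take $j=pk$, so $\floor{j/p}=k$), and since $\alpha_p(pk)=\alpha_p(k)$, your first-display computation yields cover index $(pk-\alpha_p(k))/(p-1)$ for $\HZu_k$, not the printed $(k-\alpha_p(k))/(p-1)$. A direct check at $k=1$ bears this out: with the paper's definitions, $\HZu_1=\F_p\{1,\zeta_1,\otau_1\}$ with $Q_0\otau_1=\zeta_1$ and $Q_1\otau_1=1$ (up to sign), so $\Ext_{E(1)_*}(\HZu_1)\cong\F_p[v_0,v_1]\{x,y\}/(v_1x+v_0y)$, which is a first Adams cover rather than the free module that index $0$ would require. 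So either the paper's second display carries a typo or its $\HZu$-indexing differs from Definition \ref{defn:BrownGitlercomodule}; since you echo the paper's deduction verbatim (``the evident re-grading''), your write-up inherits the same mismatch, and this re-indexing is the one step you should make explicit in a full proof.
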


From this lemma we have determined the $E(1)$-Ext groups of $\tBPu{2}_{pj-1}$. However, we also want to keep track of the names of the generators. This follows from the following proposition. This will be done by determining the $v_0$-inverted Ext groups. 

\begin{prop}[\cite{BOSS}, \cite{Culver2017}]
	We have the following isomorphism 
	\[
	v_0^{-1}\Ext_{E(1)_*}(\tBPu{2}_j)\cong \F_p[v_0^{\pm1}, v_1]\{\zeta_1^i\zeta_2^k\mid i+pk\leq j\}
	\]
\end{prop}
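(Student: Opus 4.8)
The plan is to compute the $v_0$-localized Ext by reducing, via the extension of exterior Hopf algebras $E(0)\to E(1)\to E(1)\mmod E(0)$, to the $Q_0$-Margolis homology of $\tBPu{2}_j$ together with its residual $Q_1$-action. Concretely, I would run the Cartan--Eilenberg spectral sequence
\[
\Ext_{E(1)\mmod E(0)_*}\!\bigl(\Ext_{E(0)_*}(\tBPu{2}_j)\bigr)\implies \Ext_{E(1)_*}(\tBPu{2}_j)
\]
and invert $v_0$. Since $v_0$ is a permanent cycle detected on the $E(0)_*$-line, one has $v_0^{-1}\Ext_{E(0)_*}(\tBPu{2}_j)\cong \F_p[v_0^{\pm 1}]\otimes M_*(\tBPu{2}_j;Q_0)$, so the localized $E_2$-page becomes
\[
\F_p[v_0^{\pm 1}]\otimes \Ext_{E(\otau_1)}\!\bigl(M_*(\tBPu{2}_j;Q_0)\bigr),
\]
where $M_*(\tBPu{2}_j;Q_0)$ is regarded as a comodule over $E(1)\mmod E(0)_*\cong E(\otau_1)$ via the residual $Q_1$-action, and $v_1$ is supplied by $\Ext_{E(\otau_1)}(\F_p,\F_p)\cong \F_p[v_1]$.

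First I would pin down the two ingredients. By Theorem \ref{thm:Margolis homology}, $M_*(\tBP{2};Q_0)\cong \F_p[\zeta_1,\zeta_2]$; intersecting with the weight-$\leq pj$ subspace defining $\tBPu{2}_j$ and using $\wt(\zeta_1^i\zeta_2^k)=p(i+pk)$ gives
\[
M_*(\tBPu{2}_j;Q_0)\cong \F_p\{\zeta_1^i\zeta_2^k\mid i+pk\leq j\}.
\]
Next, the residual $Q_1$-action on these classes is \emph{trivial}: the $E(2)$-action formulas give $Q_1\zeta_n=0$, and the monomials $\zeta_1^i\zeta_2^k$ are honest $Q_0$-cycles representing the Margolis classes, so $Q_1$ annihilates every generator. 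Hence $M_*(\tBPu{2}_j;Q_0)$ is a trivial $E(\otau_1)$-comodule, whence $\Ext_{E(\otau_1)}(M_*(\tBPu{2}_j;Q_0))\cong \F_p[v_1]\{\zeta_1^i\zeta_2^k\mid i+pk\leq j\}$. Feeding this back, the localized $E_2$-page is precisely the claimed free module $\F_p[v_0^{\pm1},v_1]\{\zeta_1^i\zeta_2^k\mid i+pk\leq j\}$, with the named generators sitting in the (even) internal degrees $2(p-1)i+2(p^2-1)k$.

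It then remains to see that the localized spectral sequence collapses and that there are no hidden extensions; this is the step I expect to require the most care, though it is mild. The argument is a parity count: every generator $\zeta_1^i\zeta_2^k$ lies in even internal degree and both $v_0$ and $v_1$ have even $(t-s)$, so the whole localized $E_2$-page is concentrated in even $(t-s)$, whereas the Cartan--Eilenberg differentials raise total homological degree by one while fixing $t$, hence change $(t-s)$ by an odd amount; thus every differential vanishes and, by the same parity, no multiplicative extensions can cross filtration, so the free $\F_p[v_0^{\pm1},v_1]$-module structure is forced. As an independent check on the rank I would feed the $E(1)_*$-splitting $\tBPu{2}_j\cong \bigoplus_{k=0}^j\bigoplus_{m=0}^{\floor{k/p}}\Sigma^{2k+2m}\HZu_{\floor{m/p}}$ together with $v_0^{-1}\Ext_{E(1)_*}(\HZu_\ell)\cong \F_p[v_0^{\pm1},v_1]$ into a lattice-point count: grouping the pairs $(i,k)$ with $i+pk\leq j$ according to the value $c=i+pk\in\{0,\dots,j\}$ yields $\floor{c/p}+1$ pairs for each $c$, so $\#\{(i,k)\mid i+pk\leq j\}=\sum_{c=0}^{j}(\floor{c/p}+1)$, which is exactly the number of $\HZu$-summands. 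The real content beyond this bookkeeping is the triviality of the residual $Q_1$-action: it is what makes the answer a free $v_1$-polynomial module rather than one containing $v_1$-torsion.
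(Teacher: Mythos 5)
Your proof is correct, but it takes a genuinely different route from the paper's. The paper's one-line proof points to Proposition 3.26 of \cite{Culver2017}, whose argument runs through the two lemmas immediately preceding the statement: the $E(1)_*$-comodule splitting of $\tBPu{2}_j$ into suspensions of integral Brown--Gitler comodules $\HZu_{\floor{m/p}}$, and the identification of $\Ext_{E(1)_*}(\HZu_k)$ modulo $v_0$-torsion with an Adams cover of $\F_p[v_0,v_1]$; inverting $v_0$ then turns each summand into one free copy of $\F_p[v_0^{\pm 1},v_1]$, and the remaining content is naming the bottom generators by the monomials $\zeta_1^i\zeta_2^k$. You bypass both inputs: your $v_0$-localized Cartan--Eilenberg spectral sequence for $E(0)_*\to E(1)_*\to E(1)\mmod E(0)_*$ reduces everything to the $Q_0$-Margolis homology of Theorem \ref{thm:Margolis homology} (cut down by weight, which is legitimate because $Q_0$ preserves weight, so Margolis homology splits along the weight decomposition) together with the vanishing of the residual $Q_1$-action on the classes $\zeta_1^i\zeta_2^k$, and your parity argument for collapse and for freeness of the abutment is sound, since the whole localized $E_2$-page sits in even stems while differentials shift the stem by an odd amount. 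What your route buys is self-containedness and automatic generator names---the Margolis classes literally are the monomials $\zeta_1^i\zeta_2^k$. What the paper's route buys is unlocalized information: the Adams-cover description records where each $v_1$-tower begins before inverting $v_0$, which the paper exploits afterwards (see the remark following the proposition) to reconstruct $\Ext_{E(1)_*}(\HZu_k)$ modulo torsion from the localized answer, and it ties the generators to the same $\HZu$-splitting that drives the inductive spectral sequences of Section \ref{sec:calculations}. Two points you should make explicit in a write-up: that inverting $v_0$ is compatible with the Cartan--Eilenberg spectral sequence (harmless here, as $\tBPu{2}_j$ is finite dimensional, so there are no convergence issues), and that the $E(\otau_1)$-coaction on $v_0^{-1}\Ext_{E(0)_*}(\tBPu{2}_j)\cong\F_p[v_0^{\pm 1}]\otimes M_*(\tBPu{2}_j;Q_0)$ really is the one induced by the residual $Q_1$-action---both standard, but neither automatic.
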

\begin{proof}
	The proof is an appropriate adaption of the proof of Proposition 3.26 of \cite{Culver2017}.
\end{proof}

\begin{rmk}
	As in \cite{Culver2017}, the purpose of the preceding proposition is to keep track of the generators in lowest degree of the Adams cover associated to the integral Brown-Gitler comodules $\HZu_k$. Moreover, as in \cite[Remark 3.28]{Culver2017}, there is an algorithm to recover $\Ext_{E(1)_*}(\HZu_k)/tors$ from $v_0^{-1}\Ext_{E(1)_*}(\HZu_k)$.
\end{rmk}

After inverting $v_0$ in the spectral sequences \eqref{eqn:inductiveSS}, the $E_1$-terms become concentrated in even degree. Consequently, there are no differentials in the $v_0$-inverted spectral sequences. Similar arguments found in \cite{Culver2017} show the following

\begin{prop}
	In the spectral sequences \eqref{eqn:inductiveSS} the only nontrivial differentials occur between $v_2$-torsion classes. Consequently, when $i=p-1$, the spectral sequence collapses at $E_1$. 
\end{prop}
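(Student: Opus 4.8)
The plan is to combine the $\F_p[v_0,v_1,v_2]$-linearity of the spectral sequences \eqref{eqn:inductiveSS} with the fact that their differentials behave like Adams $d_1$-differentials, and then to exploit the collapse of the $v_0$-inverted spectral sequences recorded in the paragraph preceding the statement, exactly as in \cite{Culver2017} and \cite{BOSS}. First I would record the structural decomposition of the $E_1$-term. Each comodule appearing in Lemma \ref{lem: exact sequences} --- namely $\Sigma^{qpj}\ellu_j\otimes\ellu_i$, the quotient $Q^{pj-1}\AE{1}\cong\tBPu{2}_{pj-1}\otimes\E2E1$, and the shifted copies of $\ellu_{j-1}$ --- has $\Ext_{E(2)_*}$ splitting as a $v_2$-torsion-free summand concentrated in even $(t-s)$-degree together with a $v_2$-torsion summand concentrated in Adams filtration $s=0$. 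For the $\ellu$-terms this is the inductive hypothesis, consistent with the splitting of $\Ext_{E(2)_*}(\AE{2})$ established in Section \ref{sec:structural results} via Remark \ref{rmk:decomposition}; for $Q^{pj-1}\AE{1}$ it is Remark \ref{rmk:ext-iso}, whose $v_1$-torsion in $\Ext^0_{E(1)_*}$ furnishes the filtration-$0$ torsion while the $v_1$-torsion-free, even-degree part furnishes the torsion-free summand. Crucially, the torsion-free summand is $v_0$-torsion-free, whereas the filtration-$0$ torsion summand is annihilated upon inverting $v_0$.

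Next I would run the parity argument. The internal degree $v_2$ is even and the shifts $[k_i]$ (here $k_i\in\{0,1\}$) change the contributing internal degree by $2k_i$, so every torsion-free class sits in even $(t-s)$-degree in the indexing of \eqref{eqn:inductiveSS}; on the other hand a differential reverses the parity of the stem $t-s$. This already forbids differentials between two torsion-free classes. To handle the mixed cases I would invoke the observation that, after inverting $v_0$, the $E_1$-term is concentrated in even degree, so the $v_0$-inverted spectral sequence carries no differentials. Since the torsion summand dies under $v_0$-inversion while the torsion-free summand is $v_0$-torsion-free, this forces every nontrivial differential to take values in the $v_2$-torsion part; and by parity together with the $v_0$-torsion-freeness of the torsion-free summand one checks, exactly as in the prime-two argument of \cite{Culver2017}, that no differential can emanate from a torsion-free class either. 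By $\F_p[v_0,v_1,v_2]$-linearity the same conclusion holds after multiplying by powers of $v_2$. This establishes the first assertion: the only nontrivial differentials connect $v_2$-torsion classes.

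Finally, for the case $i=p-1$ the exact sequence of Lemma \ref{lem: exact sequences} degenerates to a short exact sequence, so \eqref{eqn:inductiveSS} involves only the two summands $\Sigma^{qpj}\ellu_j\otimes\ellu_{p-1}$ and $Q^{pj-1}\AE{1}$, both entering with trivial shift, and the sole possible differential is the connecting homomorphism, which raises Adams filtration by one. By the first assertion this map can be nonzero only between $v_2$-torsion classes; but all such classes lie in filtration $s=0$, so a filtration-raising differential out of $s=0$ would have to land in $s=1$, where no $v_2$-torsion survives. Hence the connecting map vanishes and the spectral sequence collapses at $E_1$.

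I expect the main obstacle to be the step in the second paragraph ruling out differentials emanating from $v_2$-torsion-free classes: the parity argument by itself only excludes torsion-free-to-torsion-free differentials, and one genuinely needs the interaction between the $v_0$-torsion-freeness of the torsion-free summand and the collapse of the $v_0$-inverted spectral sequence. I would carry out this bookkeeping \emph{mutatis mutandis} from the corresponding proposition in \cite{Culver2017}, the only genuine change being the arithmetic of the odd-primary degrees $|\otau_k|=2p^k-1$ and $|v_2|$.
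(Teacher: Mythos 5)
Your overall strategy --- decompose each $E_1$-summand, use parity of stems, and play $v_0$-linearity against the collapse of the $v_0$-inverted spectral sequence --- is the strategy the paper intends (the paper itself offers no more than the $v_0$-inversion remark and a pointer to \cite{Culver2017}). But your execution contains a genuine gap, rooted in a misidentification of the $v_2$-torsion. The summand $\Ext_{E(2)_*}(Q^{pj-1}\AE{1})\cong\Ext_{E(1)_*}(\tBPu{2}_{pj-1})$ of the $E_1$-page is \emph{entirely} $v_2$-torsion: under the change-of-rings isomorphism of Remark \ref{rmk:ext-iso}, the $\F_p[v_0,v_1,v_2]$-module structure factors through the restriction map $\Ext_{E(2)_*}(\F_p)\to\Ext_{E(1)_*}(\F_p)=\F_p[v_0,v_1]$, which sends $v_2$ to $0$; and since the filtration of $Q^{pj-1}\AE{1}$ is finite, $v_2$ acts nilpotently on the actual Ext group. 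So its $v_1$-torsion-free, even-degree part is \emph{not} a ``$v_2$-torsion-free summand,'' as you assert in your first paragraph. This is not a pedantic point: it is precisely why the paper must later resolve hidden $v_2$-extensions (the red classes in the tables, and the relation $v_2\zeta_1^{pk}\zeta_2^{i_2}\zeta_3^{i_3}=x_{k,i_3}$) --- on the $E_\infty$-page those classes are annihilated by $v_2$, and their genuine $v_2$-multiples are the classes contributed by the $\ellu_{j-1}[1]$-summands.

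The mislabeling then undermines the step you yourself flag as the main obstacle: ruling out differentials \emph{emanating} from your ``torsion-free'' classes cannot be done with the tools you name, because the localization argument is one-sided. Exactness of localization shows a nonzero differential cannot \emph{hit} a $v_0$-torsion-free class (its image would survive $v_0$-inversion, contradicting the localized collapse), but it says nothing about a differential \emph{from} a $v_0$-torsion-free class \emph{into} $v_0$-torsion, since such a differential dies upon inverting $v_0$; parity does not exclude this either, as the filtration-zero torsion contains odd-stem classes. For the even-degree classes of the $Q^{pj-1}\AE{1}$-summand, differentials into the filtration-zero torsion of $\ellu_{j-1}[1]$ simply cannot be excluded this way --- and the Proposition does not claim they vanish: with the torsion correctly identified, both source and target of such a differential are $v_2$-torsion, so they are ``between $v_2$-torsion classes.'' Once this is fixed the statement to be proved is weaker and the argument closes: the genuine $v_2$-torsion-free classes live only in $\Ext_{E(2)_*}(\Sigma^{qpj}\ellu_j\otimes\ellu_i)$ and in the torsion-free parts of the $\ellu_{j-1}[1]$-summands; the former supports no differentials at all for structural reasons (in the exact couple of Lemma \ref{lem: exact sequences} the subcomodule term only \emph{receives} differentials), differentials out of the latter would land in odd-stem, positive-filtration classes of the former, which vanish because the odd-stem part there is concentrated in filtration $0$, and your (correct) localization argument shows no differential can hit a torsion-free class. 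Together with the fact that every differential raises the contributing filtration by one, this yields both assertions, including the $i=p-1$ collapse.
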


Since the $v_0$-inverted spectral sequences collapse immediately, we obtain for $i=0,\ldots, p-1$ summands 
\begin{equation}\label{eqn:BP2summand}
	v_0^{-1}\Ext_{E(1)_*}(\Sigma^{q(p^2j+pi)}\tBPu{2}_{pj-1})\subseteq v_0^{-1}\Ext_{E(2)_*}(\Sigma^{q(p^2j+pi)}\ellu_{pj+i}).
\end{equation}
Our first task is to provide the names of these generators. Following \cite[\textsection 3.3]{Culver2017}, we use the following diagram
\[
\begin{tikzcd}
	\Sigma^{q(p^2j+pi)}\ellu_{pj+i}\arrow[r]\arrow[d, "\cong"] & \Sigma^{q(p^2j+pi)}Q^{pj-1}\AE{1}\\
	M_2(p^2j+pi) & 
\end{tikzcd}.
\]
Given a monomial $\zeta_1^{i_1}\zeta_2^{i_2}\in v_0^{-1}\Ext_{E(1)_*}(\tBPu{2}_{pj-1})$, we obtain from the diagram, 
\[
\begin{tikzcd}
	\zeta_1^{i_1}\zeta_2^{i_2}\arrow[r, mapsto]\arrow[d, mapsto] & \zeta_1^{i_1}\zeta_2^{i_2}\\
\zeta_1^a\zeta_2^{i_1}\zeta_3^{i_2}
\end{tikzcd}
\]
where 
\[
a:= p^2j+pi-pi_1-p^2i_2.
\]
Thus we obtain
\begin{lem}\label{lem: BP2summand}
	Let $i\in \{0,\ldots, p-1\}$. The summands \eqref{eqn:BP2summand} are generated as modules over $\F_p[v_0^{\pm 1},v_1]$ by the monomials
	\[
	\zeta_1^{a}\zeta_2^{i_2}\zeta_3^{i_3}
	\]
	where $i_2+pi_3\leq pj-1$ and $a:= p^2j+pi-pi_2-p^2i_3$.
\end{lem}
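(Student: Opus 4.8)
The plan is to combine the proposition computing $v_0^{-1}\Ext_{E(1)_*}(\tBPu{2}_j)$ with the explicit generator-tracking recorded in the diagram preceding the statement, so that the whole argument becomes a matter of transporting a known generating set across the identifications. Specializing that proposition to $j\mapsto pj-1$ gives an isomorphism of $\F_p[v_0^{\pm1},v_1]$-modules
\[
v_0^{-1}\Ext_{E(1)_*}(\tBPu{2}_{pj-1})\cong \F_p[v_0^{\pm1},v_1]\{\zeta_1^{i_1}\zeta_2^{i_2}\mid i_1+pi_2\leq pj-1\},
\]
and after suspending by $\Sigma^{q(p^2j+pi)}$ this is precisely the summand \eqref{eqn:BP2summand}. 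Since the suspension affects only internal degree and not the module structure, it therefore suffices to determine, for each generator $\zeta_1^{i_1}\zeta_2^{i_2}$, its name as a monomial under the identification $\Sigma^{q(p^2j+pi)}\ellu_{pj+i}\cong M_2(p^2j+pi)$ supplied by Corollary \ref{cor5:comod-iso}.

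First I would make the identification concrete using Remark \ref{rmk:inverse-map}: the comodule isomorphism of Corollary \ref{cor5:comod-iso} is realized by the inverse map $\varphi_2^{-1}$, which raises each $\zeta$-index by one and prepends a power of $\zeta_1$, sending
\[
\zeta_1^{i_1}\zeta_2^{i_2}\longmapsto \zeta_1^{a}\zeta_2^{i_1}\zeta_3^{i_2}.
\]
The exponent $a$ is then pinned down by the normalization that the image must lie in $M_2(p^2j+pi)$, i.e.\ must have weight $p(p^2j+pi)=p^3j+p^2i$. Computing the weight of $\zeta_1^a\zeta_2^{i_1}\zeta_3^{i_2}$ via Definition \ref{defn:weight} gives $ap+i_1p^2+i_2p^3$; equating and dividing by $p$ yields
\[
a = p^2j+pi-pi_1-p^2i_2,
\]
which agrees with the value of $a$ displayed in the diagram before the statement.

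Finally I would assemble the pieces: as $\zeta_1^{i_1}\zeta_2^{i_2}$ ranges over the monomials with $i_1+pi_2\leq pj-1$, its image $\zeta_1^a\zeta_2^{i_1}\zeta_3^{i_2}$ ranges bijectively over the claimed monomials, and because the chain of maps is an isomorphism of $\F_p[v_0^{\pm1},v_1]$-modules, these images form a generating set for \eqref{eqn:BP2summand}; relabelling the indices $(i_1,i_2)$ as $(i_2,i_3)$ recovers the stated form. I do not expect a genuine obstacle here, as the content is entirely bookkeeping; the only points requiring care are verifying that the weight normalization determines $a$ uniquely and that the index constraint $i_1+pi_2\leq pj-1$ is carried along unchanged by $\varphi_2^{-1}$ (so that the bound $i_2+pi_3\leq pj-1$ in the statement is exactly the transported condition).
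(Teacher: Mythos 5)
Your argument is correct and is essentially the paper's own: the paper proves this lemma by the same diagram chase, transporting the generators $\zeta_1^{i_1}\zeta_2^{i_2}$ of $v_0^{-1}\Ext_{E(1)_*}(\tBPu{2}_{pj-1})$ (with $i_1+pi_2\leq pj-1$) through the identification $\Sigma^{q(p^2j+pi)}\ellu_{pj+i}\cong M_2(p^2j+pi)$ of Corollary \ref{cor5:comod-iso}, with the exponent $a=p^2j+pi-pi_1-p^2i_2$ forced by the weight normalization of Remark \ref{rmk:inverse-map}. Your explicit verification that the weight condition pins down $a$ uniquely is the only detail the paper leaves implicit.
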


We also have the following summands of the $v_0^{-1}$-Ext groups arising from the inductive terms,
\begin{equation}\label{eqn: inductive summand}
	v_0^{-1}\Ext_{E(2)_*}(\Sigma^{q((p^2+p)j+pi)}\ellu_j\otimes \ellu_i)\subseteq v_0^{-1}\Ext_{E(2)_*}(\Sigma^{q(p^2j+pi)}\ellu_{pj+i})
\end{equation}
for $i\in \{0,\ldots , p-1\}$. Our next goal is to name the generators of this summand. This is determined by considering the complementary parts of the exact sequences in \ref{lem: exact sequences}. Specifically, consider the diagram
\[
\begin{tikzcd}
	0\arrow[r]& \Sigma^{q((p^2+p)j+i)} \ellu_{j}\otimes \ellu_i\arrow[r] & \Sigma^{q(p^2j+pi)}\ellu_{pj+i}\arrow[d,"\cong"]\\
	& \Sigma^{qpj}M_2(p^2j)\otimes \ellu_i\arrow[u, "\cong"]& M_2(p^2j+pi)
\end{tikzcd}
\]
which on a monomial gives
\[
\begin{tikzcd}
	\zeta_1^{i_2}\zeta_2^{i_3}\otau_2^{\epsilon_3}\cdots \otimes \zeta_1^k\arrow[r, mapsto]& \zeta_1^{a+k}\zeta_2^{i_2}\zeta_3^{i_3}\otau_3^{\epsilon_3}\cdots\arrow[d, mapsto]\\
	\zeta_1^{i_1}\zeta_3^{i_2}\zeta_4^{i_3}\otau_4^{\epsilon_3}\cdots \otimes \zeta_1^k \arrow[u, mapsto]& \zeta_1^b\zeta_2^{a+k}\zeta_3^{i_2}\otau_4^{\epsilon_3}\cdots
\end{tikzcd}
\]
where the integer $a$ is as in Remark \ref{rmk:inductive map},
\[
a:= p^2j-p^{-1}\wt(\zeta_2^{i_2}\zeta_3^{i_3}\otau_3^{\epsilon_3}\cdots) = i_1.
\]
From Remark \ref{rmk:inverse-map}, the integer $b$ is given by 
\[
b=p^2j+pi-p^{-1}\wt(\zeta_2^{a+i_1}\zeta_3^{i_2}\otau_4^{\epsilon_3}\cdots) = p^2j+pi-(p^2j+pk)= p(i-k)
\]
Observe that in the case when $i=0$, then $k=0$, and so $b=0$.

%

 Combining these observations shows that,

\begin{prop}
	Assume inductively that $\Ext_{E(2)_*}(\Sigma^{qpj}\ellu_{j})$ has generators of the form $\{\zeta_1^{i_1}\zeta_2^{i_2}\zeta_3^{i_3}\cdots\}$. Then for $i=0$, the summand \eqref{eqn: inductive summand} has generators of the form $\{\zeta_2^{i_1}\zeta_3^{i_2}\cdots\}$ and for $0<i<p$, the summand \eqref{eqn: inductive summand} has generators of the form 
	\[
	\{\zeta_2^{i_1}\zeta_3^{i_2}\cdots\}\cdot\{\zeta_1^{pi}, \zeta_1^{p(i-1)}\zeta_2, \ldots, \zeta_1^p\zeta_2^{i-1},\zeta_2^i\}.
	\]
	
\end{prop}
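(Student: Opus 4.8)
The plan is to reduce the statement to pure bookkeeping of monomial names through the commutative diagram that immediately precedes the proposition, relying on the fact that the relevant $v_0$-inverted spectral sequences have already been shown to collapse, so that the lowest-degree generators can be read off directly from their $E_1$-names.

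First I would record the structure of $\ellu_i$ as an $E(2)_*$-comodule. Since $i\in\{0,\ldots,p-1\}$, every monomial of $\ellu_i$ has weight at most $p(p-1)<p^2$; because $\zeta_2$, $\otau_2$, and all higher generators have weight at least $p^2$, the only monomials of weight $pk$ with $0\leq k\leq i$ are the powers $\zeta_1^k$. By \eqref{eqn:Enzeta} each $\zeta_1$ is a comodule primitive, so $\ellu_i\cong_{E(2)_*}\bigoplus_{k=0}^i\Sigma^{qk}\F_p$. Consequently the summand \eqref{eqn: inductive summand} splits as a direct sum, over $0\leq k\leq i$, of copies of the $v_0$-inverted $\Ext$-groups of $\Sigma^{q((p^2+p)j+pi)}\ellu_j$, where the $k$-th summand carries the external label $\zeta_1^k$ coming from $\ellu_i$.

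Next I would invoke the inductive hypothesis, which names the generators of $\Ext_{E(2)_*}(\Sigma^{qpj}\ellu_j)$ by monomials $\zeta_1^{i_1}\zeta_2^{i_2}\zeta_3^{i_3}\cdots$, so that the generators of the summand \eqref{eqn: inductive summand} are exactly the classes $(\zeta_1^{i_1}\zeta_2^{i_2}\cdots)\otimes\zeta_1^k$. It then remains to identify the name of each such class inside $\Sigma^{q(p^2j+pi)}\ellu_{pj+i}\cong M_2(p^2j+pi)$. For this I would trace the class around the diagram using the explicit monomial formula of Remark \ref{rmk:inductive map} together with the inverse formula of Remark \ref{rmk:inverse-map}, substituting the two exponents already computed in the text, namely $a=i_1$ and $b=p(i-k)$. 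This yields the name $\zeta_1^{p(i-k)}\zeta_2^{i_1+k}\zeta_3^{i_2}\zeta_4^{i_3}\cdots$.

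Finally I would factor this monomial as
\[
\zeta_1^{p(i-k)}\zeta_2^{i_1+k}\zeta_3^{i_2}\zeta_4^{i_3}\cdots = (\zeta_2^{i_1}\zeta_3^{i_2}\zeta_4^{i_3}\cdots)\cdot(\zeta_1^{p(i-k)}\zeta_2^{k}),
\]
and let $k$ run over $0,\ldots,i$: the first factor is the index-shifted form of the inductive generator, while the second factor ranges over exactly the list $\{\zeta_1^{pi},\zeta_1^{p(i-1)}\zeta_2,\ldots,\zeta_1^p\zeta_2^{i-1},\zeta_2^i\}$. When $i=0$ only $k=0$ occurs, the second factor is $1$, and the names reduce to $\zeta_2^{i_1}\zeta_3^{i_2}\cdots$, giving both assertions. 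The only delicate point, and the one I would be most careful about, is the index-shifting bookkeeping through the $\varphi_2$-type isomorphisms: one must confirm that the powers of $\zeta_1$ recorded by $a$ and $b$ combine correctly with the external label $\zeta_1^k$ and that no weight is miscounted, since the entire content of the proposition is the precise form of these exponents.
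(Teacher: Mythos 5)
Your proposal is correct and takes essentially the same route as the paper: the paper's own argument is precisely the diagram chase you describe, tracing a generator $\zeta_1^{i_1}\zeta_2^{i_2}\cdots\otimes\zeta_1^k$ through the square built from Remark \ref{rmk:inductive map} and Remark \ref{rmk:inverse-map}, obtaining the exponents $a=i_1$ and $b=p(i-k)$, and hence the name $\zeta_1^{p(i-k)}\zeta_2^{i_1+k}\zeta_3^{i_2}\zeta_4^{i_3}\cdots$, which factors exactly as in your last step. Your preliminary observation that $\ellu_i$ is a trivial comodule spanned by $1,\zeta_1,\ldots,\zeta_1^i$ (so the summand splits over the external labels $\zeta_1^k$) is left implicit in the paper but is accurate and harmless.
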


We would now like to determine some hidden $v_2$-extensions in these spectral sequences. Most of them will be given by the \emph{length spectral sequence}, as developed in \cite{Culver2017}. We review this now spectral sequence now. 

There are other hidden $v_2$-extensions that need to be resolved. More specifically, the summands \eqref{eqn:BP2summand} are only modules over $\F_p[v_0^{\pm},v_1]$ on the $E_1$-page. To resolve these hidden extensions, we will make use of the \emph{length spectral sequence} as developed in \cite{Culver2017}. The starting point of this spectral sequence is that we can filter $\AE{2}$ by length. 

\begin{defn}
	Define a filtration on $\AE{2}$ by setting
	\[
	G^\lambda\AE{2}:= \F_p\{m\mid \ell(m)\leq \lambda\}
	\]
\end{defn}

Equations \eqref{eqn:Q0action}, \eqref{eqn:Q1action}, \eqref{eqn:Q2action} show that this is a filtration by $E(2)_*$-comodules. Furthermore, the filtration quotients are necessarily trivial, in light of Proposition \ref{prop: Qs lower length by 1}. Notice that $G^\bullet$ is an increasing filtration, which gives rise to the following unrolled exact couple:
\[
\begin{tikzcd}
	\Ext_{E(2)_*}(G^n\AE{2})\arrow[r] & \Ext_{E(2)_*}(G^{n+1}\AE{2}) \arrow[d] \\
	  & \Ext(G^{n+1}\AE{2}/G^n\AE{2}) \arrow[lu, dotted]
\end{tikzcd}.
\]
This yields a convergent spectral sequence
\[
E_1^{s,t,\lambda} = \Ext^{s,t}_{E(2)_*}(G^{\lambda}\AE{2}/G^{\lambda-1}\AE{2})\implies \Ext_{E(2)_*}^{s,t}(\AE{2})
\]
which we deem the \emph{length spectral sequence}. Since the filtration quotients are trivial $E(2)_*$-comodules, one finds that 
\[
E_1^{*,*,\lambda}\cong \F_p[v_0,v_1,v_2]\{m\mid \ell(m)=\lambda\}.
\]
Thus, the $E_1$-page is given by 
\[
E_1^{*,*,*}=E_0\AE{2}\otimes \F_p[v_0,v_1,v_2].
\]
Since the $d_1$-differential is essentially the connecting homomorphism in Ext groups, an elementary cobar complex argument shows that on a monomial $m$, one has
\[
d_1(m) = v_0\cdot Q_0m+v_1\cdot Q_1m+v_2\cdot Q_2m.
\]
From this we deduce that $\Ext_{E(2)_*}(\AE{2})$ has relations of the form 
\[
v_0Q_0(m)+v_1Q_1(m)+v_2Q_2(m)=0
\]
for any monomial $m$. In particular, if $m$ is of length 0 (i.e. consists only of $\zeta$'s), then we obtain from $d_1(m\otau_2)$ the relation
\[
v_2\zeta_1^{p^2}m+ v_1\zeta_2^pm+v_0\zeta_3m=0
\]
In particular, these relations imply that for a monomial generator $m$ of $\Ext$, we should have the following type of relations
\begin{equation}\label{eqn: lengthSS relations}
	v_2m + v_1\zeta_2^p\zeta_1^{-p^2}m+ v_0\zeta_3\zeta_1^{-p^2}m=0.
\end{equation}

Of course, this relation does not make sense if $\zeta_1^{p^2}$ does not divide the monomial $m$, and we will deal with these separately. 

\begin{ex}
	Consider the case when $p=3$ and $m = \zeta_1^9\zeta_3$. This is an element in $\Ext^0$ of 
	\[
	M_2(18)\cong \Sigma^{72}\ellu_6.
	\]
	The above implies that we have the following relation in $\Ext(\Sigma^{72}\ellu_6)$, 
	\[
	v_2\zeta_1^9\zeta_3+v_1\zeta_2^3\zeta_3+v_0\zeta_3^2=0.
	\]
	The reader might notice, based on the explicit $p=3$ computations performed in the next section, that each monomial appearing in this relation is a generator of $\Ext(\Sigma^{72}\ellu_6)$.
\end{ex}

\begin{rmk}
	In \cite{Culver2017}, we showed that if $m$ is a monomial generator of $\Ext^0$ arising from $Q^{2j-1}\AE{1}$ and for which $\zeta_1^8$ divides $m$, then $\zeta_1^{-8}\zeta_2^4 m$ and $\zeta_1^{-8}\zeta_3 m$, are also generators. This required a great deal of tedious, though elementary, arguments. The author strongly believes that similar statements can be made in the odd $p$ case, but refrains from doing so, as the arguments would be \emph{even more} tedious in the present case. 
	
	For the record, however, we provide the precise mathematical statement we believe to be true: if $m$ is a monomial generator which is an element in the summand \eqref{eqn:BP2summand} and if $\zeta_1^{p^2}$ divides $m$, then the monomials $\zeta_1^{-p^2}\zeta_2^pm$ and $\zeta_1^{-p^2}\zeta_3m$ are generators of $\Ext(\Sigma^{pqj}\ellu_j)$. As evidence, one can look at the explicit computations done in the next section and check directly.
\end{rmk}

As was already mentioned, one can only apply the relation \eqref{eqn: lengthSS relations} when $\zeta_1^{p^2}$ divides $m$. There are, however, monomial generators in \eqref{eqn:BP2summand} which are not divisible by $\zeta_1^{p^2}$. 

\begin{ex}
	Again, let $p=3$ and consider the exact sequence
	\[
	0\to \Sigma^{48}\ellu_1\to \Sigma^{36}\ellu_3\to Q^{2}\AE{1}\to \Sigma^{21}\ellu_0\oplus \Sigma^{25}\ellu_0\to 0.
	\]
	Then according to Lemma \ref{lem: BP2summand}, the monomials which generate the summand 
	\[
	v_0^{-1}\Ext_{E(1)_*}(\Sigma^{36}\tBPu{2}_2)\subseteq v_0^{-1}\Ext_{E(2)_*}(\Sigma^{36}\ellu_3)
	\] 
	are $\zeta_1^9, \zeta_1^6\zeta_2$ and $\zeta_1^3\zeta_2^2$. In this case, the job of the last term $\Sigma^{21}\ellu_0\oplus \Sigma^{25}\ellu_0$ is to receive the $v_2$-multiplications of $\zeta_1^6\zeta_2$ and $\zeta_1^3\zeta_2^2$. Note that these are the only generators in this summand for which the relation \eqref{eqn: lengthSS relations} does not apply.
\end{ex}

Before proceeding to state and prove the analogue of \cite[Corollary 3.38]{Culver2017}, we first locate the monomial generators of the summand \eqref{eqn:BP2summand} which are not divisible by $\zeta_1^{p^2}$. 

\begin{prop}
	Let $i\in \{0,\ldots, p-2\}$, and consider the exact sequence \eqref{eqn: exact sequence} for $\ellu_{pj+i}$. 
\end{prop}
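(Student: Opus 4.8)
The plan is to read the generators off directly from the explicit description in Lemma \ref{lem: BP2summand} and then translate the divisibility condition into an elementary inequality. Recall that the summand \eqref{eqn:BP2summand} is generated over $\F_p[v_0^{\pm 1},v_1]$ by the monomials $\zeta_1^a\zeta_2^{i_2}\zeta_3^{i_3}$ with $i_2+pi_3\leq pj-1$ and $a=p^2j+pi-pi_2-p^2i_3$. The condition that such a generator is \emph{not} divisible by $\zeta_1^{p^2}$ is exactly the inequality $a<p^2$, so the entire problem reduces to solving $a<p^2$ subject to the weight constraint $i_2+pi_3\leq pj-1$.

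First I would rewrite $a<p^2$. Substituting the formula for $a$ gives $p^2(j-1-i_3)+p(i-i_2)<0$, and dividing by $p$ yields the clean condition $i_2>p(j-1-i_3)+i$. Combining this with the weight bound, rewritten as $i_2\leq p(j-i_3)-1$, shows that for each fixed $i_3$ the admissible values of $i_2$ run over the integer interval $\{p(j-1-i_3)+i+1,\ldots,p(j-i_3)-1\}$, which contains exactly $p-1-i$ elements. Along the way I would check that the nonnegativity constraint $a\geq 0$ imposes nothing new: it is equivalent to $i_2+pi_3\leq pj+i$, which is implied by $i_2+pi_3\leq pj-1$ since $i\geq 0$.

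Next I would pin down the range of $i_3$. Since $i_2\geq 0$ together with $i_2+pi_3\leq pj-1$ forces $i_3\leq j-1$, while $i_3\geq 0$, the exponent $i_3$ ranges over $\{0,1,\ldots,j-1\}$, and for each such value the interval above is nonempty precisely because $i\leq p-2$. This produces the complete list of $\zeta_1^{p^2}$-nondivisible generators, and a count gives $j(p-1-i)$ of them. I would close by sanity-checking against the worked example with $p=3$, $j=1$, $i=0$, where the recipe returns $i_3=0$ and $i_2\in\{1,2\}$, i.e. the monomials $\zeta_1^6\zeta_2$ and $\zeta_1^3\zeta_2^2$, matching the two generators singled out there.

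The argument is entirely elementary arithmetic, so there is no serious conceptual obstacle; the only place demanding care is the bookkeeping that separates the two simultaneous constraints on $(i_2,i_3)$ and the verification that no boundary case (for instance small $j$, or the interaction of $a\geq 0$ with $a<p^2$) is mishandled. The one genuinely load-bearing input is the explicit exponent formula $a=p^2j+pi-pi_2-p^2i_3$ from Lemma \ref{lem: BP2summand}, which is itself derived from the inverse map of Remark \ref{rmk:inverse-map}; once that formula is taken as given, the rest is a short computation.
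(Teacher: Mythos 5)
The proposition you were asked to prove is, as printed, an unfinished stub: it fixes $i\in\{0,\ldots,p-2\}$ and points at the exact sequence \eqref{eqn: exact sequence} for $\ellu_{pj+i}$, but it states no conclusion, and the paper supplies no proof of it. So there is no argument of the paper's to compare yours against; the only available check is consistency with the surrounding text. Your reconstruction of the intended content --- locating the monomial generators of the summand \eqref{eqn:BP2summand} that are not divisible by $\zeta_1^{p^2}$ --- is the right reading (it is exactly what the sentence preceding the proposition announces), and your arithmetic is correct: substituting $a=p^2j+pi-pi_2-p^2i_3$ into $a<p^2$ gives $i_2>p(j-1-i_3)+i$, which against the weight bound $i_2+pi_3\leq pj-1$ yields, for each $i_3\in\{0,\ldots,j-1\}$, exactly $p-1-i$ admissible values of $i_2$; equivalently the $\zeta_1$-exponent $a$ runs over $\{pi+p,\,pi+2p,\,\ldots,\,p^2-p\}$. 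This agrees with the lemma the paper proves immediately afterwards (that $a$ is divisible by $p$ and satisfies $a\geq pi+p$), with the paper's assertion that there are $p-i-1$ classes of generators whose $v_2$-multiples must be resolved, and with the red entries in the $p=3$ tables (for instance $\zeta_1^6\zeta_2$ and $\zeta_1^3\zeta_2^2$ in $\Sigma^{36}\ellu_3$, which your sanity check recovers). In short, your proof is sound and fills a genuine gap in the paper; the only caveat is that the statement you proved is one you had to supply yourself, since the paper never finished stating it.
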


We now need to determine some $v_2$-extensions in the spectral sequence \eqref{eqn:inductiveSS} in the case when $i=0, \ldots, p-2$. The odd primary analogue of \cite[Lemma 3.36]{Culver2017} will be useful 

\begin{lem}
	For a given $0\leq i<p$, let $M:= \bigoplus_{k=i+1}^{p-1}M_2(p(j-1)+k)$. Then the composite
	\[
	\begin{tikzcd}
		M\otimes E(\otau_2)\arrow[r, "\kappa"] & \AE{2}\arrow[r, "\pi"] & Q^{pj-1}\AE{1}
	\end{tikzcd}
	\]
	is a morphism of $E(2)_*$-comodules. 
\end{lem}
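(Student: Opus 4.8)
The plan is to check the comodule-map condition directly on the monomial basis of $M\otimes E(\otau_2)$, using that $\AE{1}$ is a comodule algebra and that passing to $Q^{pj-1}\AE{1}=\AE{1}/F^{pj}\AE{1}$ annihilates everything of sufficiently large weight. Denote the composite by $\theta$. On a basis element $m\otimes\otau_2^\epsilon$, with $m$ a monomial of $M\subseteq\AE{2}$ and $\epsilon\in\{0,1\}$, the map $\theta$ sends $m\otimes\otau_2^\epsilon$ to the class $[\kappa^{-1}(m\otimes\otau_2^\epsilon)]=[m\,\otau_2^\epsilon]$ in $Q^{pj-1}\AE{1}$. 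Give $M\otimes E(\otau_2)$ the diagonal $E(2)_*$-coaction, recalling that each $M_2(p(j-1)+k)$ is an $E(2)_*$-subcomodule of $\AE{2}$ and that the coaction on $\otau_2\in\E2E1$ is $\alpha(\otau_2)=\otau_2\otimes1+1\otimes\otau_2$. It then suffices to verify $\alpha\circ\theta=(1\otimes\theta)\circ\alpha$ on the two families $m\otimes1$ and $m\otimes\otau_2$.

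The case $\epsilon=0$ is formal. For $m\in M\subseteq\AE{2}\subseteq\AE{1}$ the $E(2)_*$-coaction computed in $\AE{1}$ is the restriction of the one in $\AE{2}$, so writing $\alpha(m)=\sum t\otimes x$ we have $x\in\AE{2}$ with $\wt(x)=\wt(m)$ because the coaction preserves weight; hence each $x$ again lies in $M$, and projecting to $Q^{pj-1}\AE{1}$ gives $\sum t\otimes[x]=(1\otimes\theta)\alpha(m\otimes1)$. In other words, on this summand $\theta$ is the composite of the subcomodule inclusions $M\hookrightarrow\AE{2}\hookrightarrow\AE{1}$ with the quotient map, all of which are $E(2)_*$-comodule maps.

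The content is in the case $\epsilon=1$, where I would reuse the weight estimate from the proof of Lemma \ref{lem: exact sequences}. Since $\AE{1}$ is a comodule algebra,
\[
\alpha(m\,\otau_2)=\alpha(m)\bigl(\otau_2\otimes1+\otau_0\otimes\zeta_2+\otau_1\otimes\zeta_1^p+1\otimes\otau_2\bigr),
\]
and writing $\alpha(m)=\sum t\otimes x$ every $x$ satisfies $\wt(x)=\wt(m)\ge p^2(j-1)+p(i+1)$ because $m\in M$. As $\zeta_2$ and $\zeta_1^p$ both lie in $F^p\AE{1}$, Lemma \ref{lem: filtration is multiplicative} gives $x\zeta_2,\,x\zeta_1^p\in F^{pj+i+1}\AE{1}\subseteq F^{pj}\AE{1}$, so these elements vanish in $Q^{pj-1}\AE{1}$. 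Therefore the two cross terms $\otau_0t\otimes x\zeta_2$ and $\otau_1t\otimes x\zeta_1^p$ die under the projection, leaving
\[
(1\otimes\pi)\,\alpha(m\,\otau_2)=\sum (t\,\otau_2)\otimes[x]+\sum t\otimes[x\,\otau_2].
\]
On the other hand, the diagonal coaction on $m\otimes\otau_2$ is $\sum(t\,\otau_2)\otimes(x\otimes1)+\sum t\otimes(x\otimes\otau_2)$, and applying $1\otimes\theta$ produces precisely the same expression. Comparing the two shows that $\theta$ intertwines the coactions on the $\epsilon=1$ summand, which finishes the verification.

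The main obstacle I anticipate is not conceptual but bookkeeping: as flagged in the Remark accompanying the connecting-homomorphism proposition, the odd-primary exterior generators carry Koszul signs, so the identifications of the $\epsilon=1$ terms above must be made with the signs of $E(\otau_2)$ and of the comodule tensor product tracked carefully. The weight inequality itself is robust and is a verbatim repetition of the cokernel computation already carried out in Lemma \ref{lem: exact sequences}, so no new estimates are needed.
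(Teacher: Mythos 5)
Your proof is correct and takes essentially the same route as the paper, which simply defers to the proof of Lemma \ref{lem: exact sequences}: the whole content is the weight estimate showing that the cross terms $\otau_0 t\otimes x\zeta_2$ and $\otau_1 t\otimes x\zeta_1^p$ land in $F^{pj}\AE{1}$ and hence vanish in $Q^{pj-1}\AE{1}$, leaving exactly the diagonal coaction on $M\otimes E(\otau_2)$. Your write-up just makes explicit (including the matching of the $t\otau_2\otimes[x]$ terms, which in the paper's cokernel computation die instead of being matched) what the paper leaves as a citation.
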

\begin{proof}
The proof is \emph{mutatis mutandis} the proof of \cite[Lemma 3.36]{Culver2017}, in particular it follows from the proof of Lemma \ref{lem: exact sequences}.
\end{proof}

\begin{rmk}
	Of course, when $i=p-1$, then $M=0$, and the lemma is trivially true. 
\end{rmk}

\begin{cor}
	Let $j\geq 1$ and let $0\leq i<p$. Let $M$ be as above. Then there is a the following commutative diagram in the category of $E(2)_*$-comodules: in the case when $i>0$,
	\[
	\begin{tikzcd}
		0\arrow[r] & \Sigma^{qpj}\ellu_{j}\otimes \ellu_i\arrow[r] & \ellu_{pj+i}\arrow[r]& Q^{pj-1}\AE{1}\arrow[r] &\bigoplus_{k=1}^{p-i-1}\Sigma^{\varphi(i,j,k)}\ellu_{j-1}\arrow[r] & 0\\
		0 \arrow[r] & 0 \arrow[r] & M\arrow[r]\arrow[u, hook] & M\otimes E(\otau_2)\arrow[r] \arrow[u]& M\otimes \F_p\{\otau_2\}\arrow[r] \arrow[u]& 0
	\end{tikzcd}
	\]
	where we define $\varphi(i,j,k)$ to be 
	\[
	\varphi(i,j,k) = q(p(j-1)+i+k)+2p^2-1.
	\]
\end{cor}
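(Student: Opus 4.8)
The plan is to exhibit the asserted diagram as a morphism of exact sequences in which the bottom row is a tensored copy of \eqref{eq4.2:SES} and the three verticals come directly from the constructions already in hand. First I would recognize the bottom row as the short exact sequence of $E(2)_*$-comodules obtained by tensoring
\[
0\to \F_p\to E(\otau_2)\to \Si^{|\otau_2|}\F_p\to 0
\]
(which is \eqref{eq4.2:SES}, using $\E2E1\cong E(\otau_2)$) with the comodule $M$, equipped with the diagonal coaction; exactness is automatic since we tensor a short exact sequence of comodules over a field, and $M$ is a subcomodule of $\AE{2}$ as a direct sum of the weight-graded summands $M_2(\cdot)$. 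The left vertical is the inclusion $M\hookrightarrow \ellu_{pj+i}$: the inclusion $\AE{2}\hookrightarrow \AE{1}$ is a map of $E(2)_*$-comodule algebras, since by Proposition \ref{prop:E(n)coaction} the $E(2)_*$-coaction agrees on the common generators $\zeta_k$ and $\otau_n$ ($n\geq 3$), and every monomial of $M$ has weight at most $p^2j-p<p(pj+i)$, so $M$ lands inside $\ellu_{pj+i}$. The middle vertical is the comodule map of the preceding lemma, which sends a monomial $m\otau_2^\epsilon$ with $m\in M$ to its class in $Q^{pj-1}\AE{1}$.

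Next I would check commutativity square by square. The left square commutes trivially because its lower-left corner is $0$. For the middle square, both composites out of $M$ send a monomial $m\in M$ to its class $[m]$ in $Q^{pj-1}\AE{1}$: along the top, $m$ is included into $\ellu_{pj+i}$ and then carried by $\psi$ (inclusion into $\AE{1}$ followed by projection, as in the proof of Lemma \ref{lem: exact sequences}) to $[m]$; along the bottom, $m\mapsto m\otimes 1$ and the preceding lemma's map returns $[m]$ as well. Since every monomial of $M$ has weight $\leq p^2j-p<p^2j$, it is nonzero modulo $F^{pj}\AE{1}$, so these classes are genuinely nonzero and the middle vertical is injective. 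The right vertical is then the induced map on cokernels, so the right square commutes by functoriality, and exactness of the top row is precisely the content of Lemma \ref{lem: exact sequences}.

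Finally I would identify the right-hand objects. On the bottom, the cokernel of $m\mapsto m\otimes 1$ is $M\otimes\F_p\{\otau_2\}$, while on the top the cokernel of $\psi$ was computed in the proof of Lemma \ref{lem: exact sequences} to be $\bigoplus_{k=1}^{p-i-1}M_2(p(j-1)+i+k)\otimes\F_p\{\otau_2\}$, and the coaction identity $\alpha(m\otau_2)=\alpha(m)(1\otimes\otau_2)$ established there shows the identification is one of $E(2)_*$-comodules. Applying Corollary \ref{cor5:comod-iso} to each summand, together with $|\otau_2|=2p^2-1$ and $\floor{(p(j-1)+i+k)/p}=j-1$ for $1\leq i+k\leq p-1$, rewrites these as $\bigoplus_{k=1}^{p-i-1}\Si^{\varphi(i,j,k)}\ellu_{j-1}$ with $\varphi(i,j,k)=q(p(j-1)+i+k)+2p^2-1$, which after the reindexing $k\leftrightarrow k+i$ matches the right-hand term of \eqref{eqn: exact sequence}. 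The only place demanding genuine care — the main obstacle — is confirming that the middle vertical and the cokernel identification are maps of $E(2)_*$-comodules rather than merely of $\F_p$-vector spaces; but these two facts are supplied respectively by the preceding lemma and by the coaction computation inside the proof of Lemma \ref{lem: exact sequences}, so the corollary amounts to repackaging those results into a single commuting diagram.
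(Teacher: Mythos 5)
Your proposal is correct and follows essentially the same route as the paper's (implicit) argument: the corollary is stated without separate proof precisely because, as you observe, it amounts to assembling the preceding lemma (which supplies the middle vertical as a map of $E(2)_*$-comodules) with the kernel/cokernel computations and the coaction identity $\alpha(m\otau_2)=\alpha(m)(1\otimes \otau_2)$ from the proof of Lemma \ref{lem: exact sequences}. Your added verifications — the weight bound placing $M$ inside $\ellu_{pj+i}$, the square-by-square commutativity, and the reindexing $k\mapsto i+k$ identifying $M\otimes \F_p\{\otau_2\}$ with $\bigoplus_{k=1}^{p-i-1}\Sigma^{\varphi(i,j,k)}\ellu_{j-1}$ via Corollary \ref{cor5:comod-iso} — are exactly the details the paper leaves to the reader.
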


Before stating the analogue of \cite[Lemma 3.38]{Culver2017}, we give observe the following. 

\begin{lem}
	For the monomial generator 
	\[
	\zeta_1^a\zeta_2^{i_2}\zeta_3^{i_3}\in v_0^{-1}\Ext_{E(1)_*}(\Sigma^{q(p^2j+pi)}\tBPu{2}_{pj-1})\subseteq v_0^{-1}\Ext_{E(2)_*}(\Sigma^{q(p^2j+pi)}\ellu_{pj+i})
	\]
	the exponent $a$ is divisible $p$ and satisfies the inequality
	\[
	a\geq pi+p.
	\]
\end{lem}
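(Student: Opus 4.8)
The plan is to read both conclusions directly off the explicit description of these generators furnished by Lemma \ref{lem: BP2summand}. There the generators of the summand \eqref{eqn:BP2summand} are precisely the monomials $\zeta_1^a\zeta_2^{i_2}\zeta_3^{i_3}$ for which
\[
a = p^2 j + pi - p i_2 - p^2 i_3, \qquad i_2 + p i_3 \le pj - 1,
\]
so the entire argument amounts to unwinding these two bookkeeping constraints.

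First I would dispose of the divisibility claim. Factoring a single power of $p$ out of the closed formula for the exponent gives
\[
a = p\bigl(pj + i - i_2 - p i_3\bigr),
\]
whence $p \mid a$ immediately, with quotient $a/p = pj + i - i_2 - p i_3$. (In particular this also shows $a/p$ is an integer, which is all the divisibility statement asserts.)

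Next, for the lower bound $a \ge pi + p$, I would use the factorization just obtained. Since $a = p(pj + i - i_2 - p i_3)$ and $pi + p = p(i+1)$, dividing through by $p$ reduces the inequality to
\[
pj + i - i_2 - p i_3 \ge i + 1,
\]
that is, to $i_2 + p i_3 \le pj - 1$. But this is exactly the admissibility (weight) constraint on the pair $(i_2,i_3)$ recorded in Lemma \ref{lem: BP2summand}, so the bound follows.

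There is no genuine obstacle here: both assertions are elementary consequences of the closed formula for the exponent together with the defining inequality $i_2 + p i_3 \le pj - 1$ cutting out the admissible generators. The only thing to be careful about is correctly matching the indexing conventions of Lemma \ref{lem: BP2summand} so that the constraint $i_2 + p i_3 \le pj - 1$ is applied to the right variables; once that identification is made, the reduction of the inequality $a \ge pi + p$ to this constraint is a one-line computation.
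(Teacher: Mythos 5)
Your proof is correct and is exactly the paper's argument: the paper's proof simply cites the formula $a = p^2j + pi - pi_2 - p^2i_3$ and the bound $i_2 + pi_3 \le pj-1$ from Lemma \ref{lem: BP2summand}, and your factorization $a = p(pj + i - i_2 - pi_3)$ together with the reduction of $a \ge p(i+1)$ to that bound is precisely the computation being invoked.
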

\begin{proof}
	This follows from the description of $a$ and the bound on $i_2+pi_3$ in Lemma \ref{lem: BP2summand}.
\end{proof}
	
This lemma tells us that the there are $p-i-1$ different classes of monomial generators in the summand \ref{eqn:BP2summand} whose $v_2$-multiple needs to be resolved. They are the monomials of the form
\[
\zeta_1^{pi+p}\zeta_2^{i_2}\zeta_3^{i_3}, \zeta_1^{pi+2p}\zeta_2^{i_2}\zeta_3^{i_3}, \ldots, \zeta_1^{p^2-p}\zeta_2^{i_2}\zeta_3^{i_3}.
\]

\begin{cor}
	Let $k\in \{i+1, \ldots, p-1\}$. Consider the summand 
	\begin{equation*}
		\begin{split}
			v_0^{-1}\Ext_{E(1)_*}(\Sigma^{(q(p^2j+pi))}M_2(pj+i-k))&\subseteq v_0^{-1}\Ext_{E(1)_*}(\Sigma^{q(p^2j+pi)}\tBPu{2}_{pj-1})\\
				&\subseteq v_0^{-1}\Ext_{E(1)_*}(\Sigma^{q(p^2j+pi)}\ellu_{pj+i})
		\end{split}
	\end{equation*}
	generated over $\F_p[v_0^{\pm1},v_1]$ by monomials of the form 
	\[
	\zeta_1^{pk}\zeta_2^{i_2}\zeta_3^{i_3}
	\]
	with $i_2+pi_3 = pj+i-k$. In the summand 
	\[
	v_0^{-1}\Ext_{E(2)_*}(\Sigma^{\varphi(i,j,k-i)+q(p^2j+pi)}\ellu_{j-1}[1])\subseteq v_0^{-1}\Ext_{E(2)_*}(\Sigma^{q(p^2j+pi)}\ellu_{pj+i})
	\]
	let $x_{k,i}$ denote the generator corresponding to $\zeta_1^{i}$. Then in the $E_\infty$-page of the spectral sequence \eqref{eqn:inductiveSS} one has the following relation
	\[
	v_2\zeta_1^{pk}\zeta_2^{i_2}\zeta_3^{i_3} = x_{k,i_3}.
	\]
\end{cor}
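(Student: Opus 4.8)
The final Corollary asserts a specific hidden $v_2$-extension: for a monomial generator $\zeta_1^{pk}\zeta_2^{i_2}\zeta_3^{i_3}$ in the summand \eqref{eqn:BP2summand} (with $i_2+pi_3=pj+i-k$ and $k>i$), multiplication by $v_2$ lands on the generator $x_{k,i_3}$ coming from the inductive $\Sigma^{\varphi(i,j,k-i)}\ellu_{j-1}[1]$ summand of the spectral sequence \eqref{eqn:inductiveSS}. The key obstruction is that the relation \eqref{eqn: lengthSS relations} from the length spectral sequence only applies when $\zeta_1^{p^2}$ divides the monomial; the preceding Lemma shows exactly these generators (with $pi+p\le pk\le p^2-p$) fail that divisibility, so their $v_2$-multiple cannot be computed internally and must instead be received by the final term of the exact sequence \eqref{eqn: exact sequence}.

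**The plan.**

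The plan is to compute the $v_2$-multiplication directly in the length spectral sequence and then identify the target via the commutative diagram of the preceding Corollary. First I would write down $m=\zeta_1^{pk}\zeta_2^{i_2}\zeta_3^{i_3}$ as a length-$0$ monomial and apply the length spectral sequence $d_1$-differential, recalling $d_1(m') = v_0 Q_0 m' + v_1 Q_1 m' + v_2 Q_2 m'$. The natural source is $m\otau_2$ (length $1$): using the $E(2)$-action formulas \eqref{eqn:Q0action}–\eqref{eqn:Q2action}, namely $Q_2\otau_2 = \zeta_1^{p^2}$ (up to the indexing in those formulas), I would extract the $v_2$-term and see that it records $v_2$ times $m$, \emph{provided} $\zeta_1^{p^2}\mid m$. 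Since here $pk\le p^2-p<p^2$, that divisibility fails, so the naive relation \eqref{eqn: lengthSS relations} is unavailable and the $v_2$-multiple is genuinely hidden.

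**Resolving the hidden extension.**

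To locate the target, I would pass to the commutative diagram in the Corollary immediately preceding the statement. The bottom row $0\to M\to M\otimes E(\otau_2)\to M\otimes\F_p\{\otau_2\}\to 0$ maps into the four-term exact sequence \eqref{eqn: exact sequence}, and the right-hand quotient $\bigoplus_{k=i+1}^{p-1}\Sigma^{\varphi(i,j,k)}\ellu_{j-1}$ is precisely built from the $\otau_2$-multiples $M_2(p(j-1)+k)\otimes\F_p\{\otau_2\}$. I would trace the class $\zeta_1^{pk}\zeta_2^{i_2}\zeta_3^{i_3}\cdot\otau_2$ through $\kappa$ and the projection $\pi$ to $Q^{pj-1}\AE{1}$, and show it represents the generator $x_{k,i_3}$ of the summand $\Sigma^{\varphi(i,j,k-i)}\ellu_{j-1}[1]$ corresponding to $\zeta_1^{i}$ under Corollary \ref{cor5:comod-iso}; the weight bookkeeping identifying the suspension $\varphi(i,j,k-i)$ is forced by Observation \ref{obs5:bddweight}. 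The main obstacle is the careful degree- and weight-matching: I must verify that the $v_2$-tower emanating from $\zeta_1^{pk}\zeta_2^{i_2}\zeta_3^{i_3}$ in the spectral sequence \eqref{eqn:inductiveSS} connects, across the extension problem encoded by the four-term sequence, to exactly the generator $x_{k,i_3}$ and not to a different summand or a multiple thereof. Once the internal cobar computation of $d_1(m\otau_2)$ is matched against the comodule map in the diagram, the relation $v_2\zeta_1^{pk}\zeta_2^{i_2}\zeta_3^{i_3}=x_{k,i_3}$ follows, completing the proof.
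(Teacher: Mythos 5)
Your proposal takes essentially the same route as the paper: the paper's own proof is just a citation to the $p=2$ argument of \cite[Corollary 3.38]{Culver2017} together with the observation that $\lfloor (pj+i-k)/p\rfloor = j-1$, and that cited argument is precisely the mechanism you describe --- the commutative diagram of the preceding corollary, the identification (via the earlier cobar computation) of the connecting homomorphism of $0\to M\to M\otimes E(\otau_2)\to M\otimes \F_p\{\otau_2\}\to 0$ with multiplication by $v_2$, and naturality of connecting homomorphisms to transport that $v_2$-multiplication into the spectral sequence \eqref{eqn:inductiveSS}. One bookkeeping correction to your trace: the element you feed into the diagram is not literally $\zeta_1^{pk}\zeta_2^{i_2}\zeta_3^{i_3}\cdot\otau_2$ (its $\zeta$-part has weight $p(p^2j+pi)>p^2j-p$, so it lies in $F^{pj}\AE{1}$ and dies in $Q^{pj-1}\AE{1}$), but rather its avatar $\zeta_1^{i_2}\zeta_2^{i_3}\otimes\otau_2\in M_2(pj+i-k)\otimes\F_p\{\otau_2\}$ under the suspension isomorphism of Corollary \ref{cor5:comod-iso} --- which is exactly the renaming $M_2(pj+i-k)\cong \Sigma^{q(pj+i-k)}\ellu_{j-1}$ that the paper's proof singles out, and under which the image is the generator $x_{k,i_3}$.
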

\begin{proof}
	The proof proceeds in essentially the same way as \cite[Corollary 3.38]{Culver2017}. Let us just mention that, since $i-k\leq -1$, we have from \ref{cor5:comod-iso},
	\[
	M_2(pj+i-k)\cong \Sigma^{q(pj+i-k)}\cong \ellu_{j-1}
	\]
	since the floor of $(pj+i-k)/p$ is $j-1$.
\end{proof}

\subsection{Explicit computations at the prime $p=3$}

The purpose of this subsection is to explicitly carry out the inductive method of calculation produced in the previous section in the particular case when the prime is $3$. Though we will mostly be working at $p=3$, we will occasionally make comments for general $p$. In this setting, $q=4$. 

To begin, Lemma \ref{lem: exact sequences} tells us that for $j\geq 1$, we have the following three exact sequences, where we write $Q^{3j-1}$ for $Q^{3j-1}\AE{1}$, 
\begin{equation}\label{eqn: 0mod3 exact sequence}
\begin{tikzcd}[column sep = small]
	0\arrow[r] & \Sigma^{12j}\ellu_j\arrow[r] & \ellu_{3j}\arrow[r] & Q^{3j-1}\arrow[r] & \Sigma^{12j+9}\ellu_{j-1}\oplus \Sigma^{12j+13}\ellu_{j-1}\arrow[r] & 0
\end{tikzcd}
\end{equation}
\begin{equation}\label{eqn: 1mod3 exact sequence}
	\begin{tikzcd}[column sep = small]
			0\arrow[r] & \Sigma^{12j}\ellu_j\otimes \ellu_1\arrow[r] & \ellu_{3j+1}\arrow[r] & Q^{3j-1}\arrow[r] & \Sigma^{12j+13}\ellu_{j-1}\arrow[r] & 0
	\end{tikzcd}
\end{equation}
and 
\begin{equation}\label{eqn: 2mod3 exact sequence}
	\begin{tikzcd}[column sep = small]
		0\arrow[r] & \Sigma^{12j}\otimes \ellu_2\arrow[r] & \ellu_{3j+2}\arrow[r] & Q^{3j-1}\arrow[r] & 0
	\end{tikzcd}.
\end{equation}
This produces spectral sequences of the form 
\begin{equation}\label{eqn: 0mod3 SS}
	\Sigma^{48j}\ellu_j\oplus \Sigma^{36j}Q^{3j-1}\oplus \Sigma^{48j+9}\ellu_{j-1}[1]\oplus \Sigma^{48j+13}\ellu_{j-1}[1]\implies \Sigma^{36j}\ellu_{3j}
\end{equation}
\begin{equation}\label{eqn: 1mod3 SS}
	\Sigma^{48j+12}\ellu_j\otimes \ellu_1\oplus \Sigma^{36j+12}Q^{3j-1}\oplus \Sigma^{48j+25}\ellu_{j-1}[1]\implies \Sigma^{36j+12}\ellu_{3j+1}
\end{equation}
and
\begin{equation}\label{eqn: 2mod3 SS}
	\Sigma^{48j+24}\ellu_{j}\otimes \ellu_2\oplus \Sigma^{36j+24}Q^{3j-1}\implies \Sigma^{36j+24}\ellu_{3j+2}.
\end{equation}
The extra suspensions arise because our inductive method is actually computing the $\Ext_{E(2)_*}$-groups of the comodules $M_2(9j+3i)$ and we have the identification 
\[
M_2(9j+3i)\cong \Sigma^{36j+12i}\ellu_{3j+i}.
\]

Recall that 
\[
\AE{1} = P(\zeta_n\mid n\geq 1)\otimes E(\otau_k\mid k\geq 2)
\]
and that the weight is given by 
\[
\wt(\zeta_n) = \wt(\otau_n) = 3^n.
\]
Since the $\zeta_n$'s are $E(2)_*$-comodule primitives, the first few $\ellu_i$ are trivial $E(2)_*$-comodules. In particular, 

\begin{align*}
	\ellu_0&=\F_3\{1\},\\
	\ellu_1&=\F_3\{1,\zeta_1\},\\
	\ellu_2&=\F_3\{1, \zeta_1, \zeta_1^2\}.
\end{align*}
The first interesting $\ellu_i$ is $\ellu_3$, whose underlying $\F_3$-vector space is 
\[
\ellu_3 = \ellu_2\oplus \F_3\{\zeta_1^3, \zeta_2, \otau_2\}, 
\]
and the only non-primitive basis element is $\otau_2$. Recall that the coaction on $\otau_2$ is given by 
\[
\alpha(\otau_2) = 1\otimes \otau_2+\otau_0\otimes \zeta_2+\otau_1\otimes \zeta_1^3+\otau_2\otimes 1.
\]

\begin{exs}
We provide a few more examples of the modules $\ellu_j$ so that the Reader may divine a pattern:
\begin{align*}
	\ellu_4 &= \ellu_3\oplus \F_3\{\zeta_1^4, \zeta_1\zeta_2, \zeta_1\otau_2\},\\
	\ellu_5 &= \ellu_4\oplus \F_3\{\zeta_1^5, \zeta_1^2\zeta_2, \zeta_1^2\otau_2\},\\
	\ellu_6&= \ellu_5\oplus \F_3\{\zeta_1^6, \zeta_1^3\zeta_2, \zeta_1^3\otau_2, \zeta_2^2, \zeta_2\otau_2\},\\
	\ellu_7 &= \ellu_6\oplus \F_3\{\zeta_1^7, \zeta_1^4\zeta_2, \zeta_1^4\otau_2, \zeta_1\zeta_2^2, \zeta_1\zeta_2\otau_2\},\\
	\ellu_8 &= \ellu_7\oplus\{\zeta_1^8, \zeta_1^5\zeta_2, \zeta_1^5\otau_2, \zeta_1^2\zeta_2^2, \zeta_1^2\zeta_2\otau_2\},\\
	\ellu_9 &= \ellu_8\oplus \F_3\{\zeta_1^9, \zeta_1^6\zeta_2, \zeta_1^6\otau_2, \zeta_1^3\zeta_2^2, \zeta_1^3\zeta_2\otau_2, \zeta_2^3, \zeta_2^2\otau_2, \otau_3\}.
\end{align*} 	
The reader may notice from this pattern that $\zeta_n$ and $\otau_n$ will make their first appearance in the comodules $\ellu_{p^{n-1}}$.
\end{exs}

We now provide a tabulation of the generators which appear in the inductive spetral sequences \ref{eqn:inductiveSS}. In the table below, summands of the form $Q^{3j-1}\AE{1}$ are implicitly understood to be modules over $\F_3[v_0^{\pm},v_1]$. Terms with hidden $v_2$-extensions are indicated in red. 

\begin{alignat*}{3}
	\ellu_0: & \quad &\F_3: &\quad &&1\\
	\Sigma^{12}\ellu_1: && \Sigma^{12} \ellu_1: &&& \zeta_1^3, \zeta_2 \\
	\Sigma^{24}\ellu_2: && \Sigma^{24}\ellu_2: &&& \zeta_1^6, \zeta_1^3\zeta_2, \zeta_2^2\\
	\Sigma^{36}\ellu_3: && \Sigma^{36}Q^{2}\AE{1}: &&& \zeta_1^{9},\textcolor{red}{\zeta_1^6\zeta_2, \zeta_1^3\zeta_2^2}\\
	&& \Sigma^{48}\ellu_1: &&& \{\zeta_2^3, \zeta_3\}\\
	&& \Sigma^{57}\ellu_0[1]\oplus \Sigma^{62}\ellu_0[1]: &&& \textcolor{red}{v_2\zeta_1^6\zeta_2, v_2\zeta_1^3\zeta_2^2}\\
	\Sigma^{48}\ellu_4: && \Sigma^{48}Q^{2}\AE{1}: &&& \zeta_1^{12}, \zeta^9\zeta_2, \textcolor{red}{\zeta_1^6\zeta_2^2}\\
	&& \Sigma^{60}\ellu_1\otimes \ellu_1: &&& \{\zeta_2^3,\zeta_3\}\cdot\{\zeta_1^3,\zeta_2\}\\
	&& \Sigma^{69}\ellu_0[1]: &&& \textcolor{red}{v_2\zeta_1^6\zeta_2^2}\\
	\Sigma^{60}\ellu_5: && \Sigma^{60}Q^2\AE{1}: &&& \zeta_1^{15}, \zeta_1^{12}\zeta_2, \zeta_1^{9}\zeta_2^2\\
	&& \Sigma^{72}\ellu_1\otimes \ellu_2 &&& \{\zeta_2^3, \zeta_3\}\cdot\{\zeta_1^6, \zeta_1^3\zeta_2, \zeta_2^2\}\\
	\Sigma^{72}\ellu_6: && \Sigma^{72}Q^5\AE{1}: &&& \zeta_1^{18}, \zeta_1^{15}\zeta_2, \zeta_1^{12}\zeta_2^2, \zeta_1^9\zeta_2^3, \zeta_1^9\zeta_3, \textcolor{red}{\zeta_1^6\zeta_2^4, \zeta_1^6\zeta_2\zeta_3, \zeta_1^3\zeta_2^5, \zeta_1^3\zeta_2^2\zeta_3}\\
	&& \Sigma^{96}\ellu_2: &&& \zeta_2^6, \zeta_2^3\zeta_3, \zeta_3^2\\
	&& \Sigma^{105} \ellu_1[1]\oplus \Sigma^{109}\ellu_1[1]: &&& \textcolor{red}{v_2\zeta_1^6\zeta_2^4, v_2\zeta_1^6\zeta_2\zeta_3, v_2\zeta_1^3\zeta_2^5, \zeta_1^3\zeta_2^2\zeta_3} \\
	\Sigma^{84}\ellu_7: && \Sigma^{84}Q^5\AE{1}:&&& \zeta_1^{21}, \zeta_1^{18}\zeta_2, \zeta_1^{15}\zeta_2^2, \zeta_1^{12}\zeta_2^3, \zeta_1^9\zeta_2^4, \zeta_1^{12}\zeta_3, \textcolor{red}{\zeta_1^{6}\zeta_2^5}, \zeta_1^9\zeta_2\zeta_3, \textcolor{red}{\zeta_1^6\zeta_2^2\zeta_3}\\
	&& \Sigma^{108} \ellu_2\otimes \ellu_1: &&& \{\zeta_2^6, \zeta_2^3\zeta_3, \zeta_3^2\}\cdot\{\zeta_1^3, \zeta_2\}\\
	&& \Sigma^{117}\ellu_1[1]: &&& \textcolor{red}{v_2\zeta_1^6\zeta_2^5, \zeta_1^6\zeta_2^2\zeta_3}\\
	\Sigma^{96}\ellu_8: && \Sigma^{96}Q^5\AE{1}: &&& \zeta_1^{24}, \zeta_1^{21}\zeta_2, \zeta_1^{18}\zeta_2^2, \zeta_1^{15}\zeta_2^3, \zeta_1^{12}\zeta_2^4, \zeta_1^{15}\zeta_3, \zeta_1^9\zeta_2^5, \zeta_1^{12}\zeta_2\zeta_3, \zeta_1^9\zeta_2^2\zeta_3\\
	&& \Sigma^{120}\ellu_2\otimes \ellu_2 &&& \{\zeta_2^6, \zeta_2^3\zeta_3, \zeta_3^2\}\cdot \{\zeta_1^6, \zeta_1^3\zeta_2, \zeta_2^2\}\\
	\Sigma^{108}\ellu_9: && \Sigma^{108}Q^8\AE{1}: &&& \zeta_1^{27}, \zeta_1^{24}\zeta_2, \zeta_1^{21}\zeta_2^2, \zeta_1^{18}\zeta_2^3, \zeta_1^{15}\zeta_2^4, \zeta_1^{18}\zeta_3, \zeta_1^{12}\zeta_2^5, \zeta_1^{15}\zeta_2\zeta_3, \zeta_1^9\zeta_2^6,\\
	&& &&& \zeta_1^{12}\zeta_2^2\zeta_3,\textcolor{red}{\zeta_1^6\zeta_2^7}, \zeta_1^9\zeta_2^3\zeta_3, \textcolor{red}{\zeta_1^3\zeta_2^8, \zeta_1^6\zeta_2^4\zeta_3}, \zeta_1^9\zeta_3^2, \\
	&& &&& \textcolor{red}{\zeta_1^3\zeta_2^5\zeta_3, \zeta_1^6\zeta_2\zeta_3^2, \zeta_1^3\zeta_2^2\zeta_3^2}\\
	&& \Sigma^{144}Q^2\AE{1}: &&& \zeta_2^9, \zeta_2^6\zeta_3, \zeta_2^3\zeta_3^2\\
	&& \Sigma^{156}\ellu_1: &&& \zeta_3^3, \zeta_4\\
	&& \Sigma^{165}\ellu_0[1]\oplus \Sigma^{169}\ellu_0[1]: &&& v_2\zeta_2^6\zeta_3, v_2\zeta_2^3\zeta_3^2\\
	&& \Sigma^{153}\ellu_2[1]\oplus \Sigma^{157}\ellu_2[1]: &&& \textcolor{red}{v_2\zeta_1^6\zeta_2^7, v_2\zeta_1^3\zeta_2^8, v_2\zeta_1^6\zeta_2^4\zeta_3, v_2\zeta_1^3\zeta_2^5\zeta_3, \zeta_1^6\zeta_2\zeta_3^2, \zeta_1^3\zeta_2^2\zeta_3^2}.
\end{alignat*}

\newpage

\bibliographystyle{plain}
\bibliography{cooperations3}

\end{document}